\documentclass[11pt,reqno]{amsart}
\usepackage{amsmath, amssymb, graphicx, mathrsfs, hyperref}
\usepackage{amsthm}
\usepackage{palatino}
\usepackage{autonum}
\usepackage[makeroom]{cancel}
\usepackage{enumerate} 
\usepackage[usenames,dvipsnames,svgnames]{xcolor}
\usepackage[top=1in, bottom=1in, left=1in, right=1in]{geometry} 
\allowdisplaybreaks

\renewcommand{\Im}{\operatorname{Im}}

\renewcommand{\Re}{\operatorname{Re}}
\renewcommand{\Im}{\operatorname{Im}}
\newcommand{\s}{{\sigma}}

\renewcommand{\a}{\alpha}
\renewcommand{\b}{\beta}
\newcommand{\e}{\epsilon}
\renewcommand{\d}{{\delta}}
\newcommand{\g}{\gamma}
\newcommand{\G}{\Gamma}

\renewcommand{\r}{{\rho}}

\newcommand{\z}{\zeta}

\newcommand{\bs}{\boldsymbol}

\renewcommand{\(}{\left\(}
\renewcommand{\)}{\right\)}
\renewcommand{\[}{\left\[}

\renewcommand{\]}{\right\]}

\numberwithin{equation}{section}
\theoremstyle{plain}
\newtheorem{theorem}{Theorem}[section]
\newtheorem{lemma}[theorem]{Lemma}
\newtheorem{remark}[]{Remark}
\newtheorem*{remark*}{Remark}

\newtheorem{corollary}[theorem]{Corollary}

\newtheorem{example}[]{Example}

\theoremstyle{definition}
\newtheorem{definition}{Definition}[section]

\makeatletter
\def\proof{\@ifnextchar[{\@oproof}{\@nproof}}
\def\@oproof[#1][#2]{\trivlist\item[\hskip\labelsep\textit{#2 Proof of\
		#1.}~]\ignorespaces}
\def\@nproof{\trivlist\item[\hskip\labelsep\textit{Proof.}~]\ignorespaces}

\makeatother

\begin{document}
	\title[Equivalence between  Functional Equation and Vorono\"{\i}-type  summation identities]{Equivalence between the Functional Equation and Vorono\"{\i}-type  summation identities for a class of $L$-Functions} 
	
	\author{Arindam Roy, Jagannath Sahoo and Akshaa Vatwani}
	\thanks{2020 \textit{Mathematics Subject Classification.} Primary 11M06, Secondary 11M26, 11M41.\\
		\textit{Keywords and phrases.}  Hecke functional equation, Vorono\"{\i} summation formula,  modular relations.}
	\address{Department of Mathematics, University of North Carolina at Charlotte, NC 28223, USA}
		\email{arindam.roy@charlotte.edu}
	\address{Department of Mathematics, Indian Institute of Technology Gandhinagar, Palaj, Gandhinagar 382355, Gujarat, India} 
	\email{jagannath.sahoo@iitgn.ac.in, akshaa.vatwani@iitgn.ac.in}
	\begin{abstract}
To date, the best methods  for estimating the growth  of mean values of arithmetic functions rely on   the Vorono\"{\i} summation formula. By noticing a general pattern in the proof of his summation formula, Vorono\"{\i} postulated that  analogous summation formulas  for $\sum a(n)f(n)$ can be obtained with  ``nice" test functions $f(n)$,  provided $a(n)$ is an ``arithmetic function". These arithmetic functions $a(n)$ are called so because they are  expected to appear as  coefficients of some $L$-functions satisfying certain properties.  
It has been well-known that the functional	equation for a general $L$-function can be used to derive a Vorono\"{\i}-type  summation identity for that $L$-function. In this article, we show  that  such a  Vorono\"{\i}-type  summation identity  in fact endows the $L$-function with some  structural properties, yielding in particular  the functional equation.  
We do this by considering  Dirichlet series satisfying functional equations involving multiple Gamma factors and show that a given arithmetic function appears as a coefficient of such a Dirichlet series  \textit{if and only if} it satisfies the aforementioned summation formulas. 
	\end{abstract}
	\maketitle
	\section{Introduction}
	\label{sec: results}

In number theory and many other branches of mathematics, distributions of finite sums of arithmetic functions  constitute an active area of interest. An important tool for calculating the finite sum of a given  arithmetic function is a summation formula of the form 
\begin{align}
		\label{eqn:summationformula}
	\sideset{}{'}\sum_{a\leq n\leq b}f(n)=\int_{a}^{b}f(x)\,dx+2\sum_{n=1}^{\infty}\int_{a}^bf(x)\cos(2\pi n x)\,dx.
\end{align}
Here the function $f$ can be taken as a function of bounded variation that is supported in a finite compact interval and the dash on the sum indicates  that the last term of the sum is multiplied by  $\frac{1}{2}$ if $n=b$. The Gauss circle problem and the Dirichlet divisor problem are two well-known problems in this direction. Dirichlet showed that if $d(n)$ denotes the number of positive divisors of an integer $n$, then
\begin{align}
	\Delta_d(x):=\sum_{n\leq x}d(n)-x\log x-(2\gamma -1)x=O(x^{1/2}).
\end{align}
On the other hand, if $r_2(n)$ is the number of ways to represent a positive integer $n$ as a sum of squares of two integers then Gauss showed that 
\begin{align}
	\Delta_r(x)=\bigg\lvert\sum_{n\leq x}r_2(n)-\pi x\bigg\rvert=O(x^{1/2}).
\end{align} 
Much later, in  1904, Vorono\"{\i} \cite{VoronoiI,VoronoiII}  generalized the summation formula \eqref{eqn:summationformula} to improve the bounds for $\Delta_d(x)$ and $\Delta_r(x)$ to $O(x^{1/3})$. In particular, he obtained in \cite{VoronoiI} 
\begin{align}
	\sideset{}{'}\sum_{a\leq n \leq b}d(n)f(n)=\int_a^bf(x)(\log x+2\gamma)\,dx+\sum_{n=1}^{\infty}d(n)\int_a^bf(x)L_0(4\pi\sqrt{nx})\,dx\label{eq:vsd}
\end{align}
and 
\begin{align}
	\sideset{}{'}\sum_{a\leq n \leq b}r_2(n)f(n)=\pi\sum_{n=1}^{\infty}r_2(n)\int_a^bf(x)J_0(2\pi\sqrt{nx})\,dx,\label{eq:vsr}
\end{align}
where $f$ is a piece-wise monotonic and continuous function, $L_0(x):=4K_0(x)-2\pi Y_0(x)$,  $K_0$ is the modified Bessel function of the second kind, and $J_0, Y_0$ are Bessel functions of the first and second kind respectively. 			

Vorono\"{\i} summation is the best known method for improving the bounds for $\Delta_d(x)$ and $\Delta_r(x$) and was crucial in understanding the lower bounds of these two error terms. By using  \eqref{eq:vsd} and \eqref{eq:vsr}, Hardy and Landau observed in \cite{HardyLattice} that $\Delta_d(x)$ and $\Delta_r(x)$ cannot be $O(x^{1/4})$.  Consequently, they conjectured that both $\Delta_d(x)$ and $\Delta_r(x)$ are $O_\e(x^{1/4+\epsilon})$ for any $\e>0$.

Returning to the work in \cite{VoronoiI, VoronoiII}, Vorono\"{\i} found a common pattern in the proofs of \eqref{eq:vsd} and \eqref{eq:vsr}. This led  him to propose  analogous summation formulas of the type \eqref{eq:vsd} and \eqref{eq:vsr} for $\sum_{a\leq n\leq b}a(n)f(n)$, where $a(n)$ is any ``arithmetic function" and $f(n)$ is a ``nice" test function. Without evidence, it is generally accepted that $a(n)$ is chosen to be the coefficient of some known $L$-function in order to produce analogues of the Vorono\"{\i} summation formula.  The choices for $a(n)$ now also include automorphic forms as seen in \cite{GoldfeldLiI}, \cite{MilschmidI}, etc.  The selection of the test function $f(n)$ is an important component of the Vorono\"{\i} summation formula. Ideally, one would like to choose  discontinuous  characteristic functions as candidates for  $f(n)$, since one is primarily interested in the partial sum $\sum_{a\leq n\leq b}a(n)$. However, one notes that  smooth functions serve as good approximations to these test functions and due to the rapid decay of smooth functions, the integral transforms on the right-hand side of the summation formula (analogous to those on the right-hand side of \eqref{eq:vsd} or  \eqref{eq:vsr}) can be analyzed well. On the other hand, due to the approximation error, a small amount of information is lost when a characteristic function is replaced  by a smooth function. 

The main objective of this article is to revisit the coefficients $a(n)$ in the Vorono\"{\i} summation formula. 
We consider a broad class  $\mathcal{C}$ of Dirichlet series satisfying functional equations of Hecke-type and  derive  analogues of the Vorono\"{\i} summation formula for coefficients of these series. While results in this direction have been obtained  in a lot of generality, the key aspect of this article is that we demonstrate the converse. More precisely, we show that if an `arithmetic function' $a(n)$ satisfies such a Vorono\"{\i}-type summation formula, then it must appear as a coefficient of some Dirichlet series within this general class. In particular, we prove an equivalence between the functional equation of Hecke-type for the Dirichlet series corresponding to $a(n)$ and the analogue of the Vorono\"{\i}  summation formula for $a(n)$. While there are multiple results  in  various  settings for the  forward implication  (that is, showing that the functional equation implies a summation formula),     the converse is not known in much generality.   We elaborate  more upon this  in Section  \ref{sec: results}. The converse  is usually proved via an identity known as a modular relation (see for instance \eqref{modrelsing}).  
A unique feature of our result is that   in order to  prove this converse for our general $L$-function, we give an auxiliary modular relation which is different from the one occurring naturally for our Dirichlet series, and has not appeared in the literature previously. Moreover, we show that  this new modular relation is  equivalent to the functional equation.

Modular identities have a rich history, going back to the ubiquitous Poisson summation formula, which serves as the basis for them. The Poisson summation formula is expressed as 
\begin{align}
	\sum_{n\in\mathbb{Z}}f(n)=\sum_{n\in\mathbb{Z}}\hat{f}(n)
\end{align}
for all Schwartz functions $f$. Here $\hat{f}(t):=\int_{\mathbb{R}}f(x)e^{-2\pi i t x}\,dx$ is the Fourier transform of $f$.  If one considers the function $f(x)=e^{-\pi x^2 t}$, $t>0$, then we have the well-known modular identity, also called  the $\theta$-identity:
\begin{align}
	\theta(t):=\sum_{n\in\mathbb{Z}}e^{-\pi n^2 t}=\frac{1}{\sqrt{t}}\sum_{n\in\mathbb{Z}}e^{-\frac{\pi n^2}{t}},   \quad  (t>0).
\end{align}
It was based on this identity that Riemann \cite{Riemann} in his landmark memoir built the theory of the zeta function  and  derived the functional equation of $\zeta(s)$.  
Results obtaining the  $\theta$-identity from the functional equation  were given by Cahen \cite{Cahen},  Hamburger \cite{HamburgerI, HamburgerII,HamburgerIII,HamburgerIV}, Hurwitz (unpublished note, cf. \cite{Oswald}).  

In order to  state our results, we recall the Riesz-mean  sum:  $\sideset{}{'}\sum_{n\leq x}a(n)(x-n)^{\rho}$, with $\rho\ge 0$, where {the  dash on the sum indicates that the last term of the sum has to be multiplied by $\frac{1}{2}$ if $\r=0$ and $x=n$.}  
After the work of Ramanujan on $\tau(n)$ (defined by $\sum_{n=1}^{\infty}\tau(n)z^n=z\prod_{n=1}^{\infty}(1-z^n)^{24}$ for $|z|<1$), Wilton \cite{Wilton} considered the Riesz sum $\sideset{}{'}\sum_{n\leq x}\tau(n)(x-n)^{\rho}$ and obtained the summation formula 
\begin{align}
A_{\rho}(x):=\frac{1}{\Gamma(\rho +1)}\sideset{}{'}\sum_{n\leq x}\tau(n)(x-n)^{\rho}=\left(\frac{1}{2\pi}\right)^{\rho}\sum_{n=1}^{\infty}\left(\frac{x}{n}\right)^{6+\rho/2}\tau(n)J_{12+\rho}(4\pi\sqrt{nx})\label{eq:risu}
\end{align}
for $\rho>0$. Hardy \cite{Hardy} improved this result and proved that $\rho$ can be taken to be $0$. This is the most desirable case. Recall that  for any suitable smooth test function $f$, partial summation gives
\begin{align}
	\sideset{}{'}\sum_{n\leq x}\tau(n)f(n)=A_0(x)f(x)-\int_1^xA_0(t)f'(t)\,dt.\label{partsum}
\end{align}
Substituting the infinite series representation of $A_0(t)$ from \eqref{eq:risu}  into this and performing some  elementary computations yields  analogues of the Vorono\"{\i} summation formula. 
Although the right-hand side of \eqref{eq:risu} converges absolutely and uniformly in any compact interval for large values of $\rho$,  it can be seen to be convergent by some other means for  small values of $\rho$.  If we denote $S(x)=\sideset{}{'}\sum_{n\leq x} a(n)$ and $S_1(x)=\sideset{}{'}\sum_{n\leq x} a(n)(x-n)$ for a general arithmetic function $a(n)$, then by partial summation, one has $S_1(x)=\int_1^x S(t)\, dt$. Since  the infinite series representation of $S_1(x)$ (analogous to the right-hand side of \ref{eq:risu}) converges rapidly and analyzing the series is relatively simple, bounding $S(x)$ in terms of $S_1(x)$ is fruitful. This is a key reason for studying Reisz sums. Moreover, if $S(x)$ is monotonic then one can bound $S(x)$ by the difference $S_1(x)-S_1(x(1\pm \delta))$ for some suitable choice of $\delta$.

We now state our main results below.
We will consider a  class $\mathcal{C}$ of Dirichlet series with a more general  functional equation with multiple Gamma factors. This class is  defined as follows. 
\begin{definition}\label{def:fn eqn of F} We say that $\phi(s), \psi(s) \in \mathcal C$ if 
		\begin{enumerate}

			\item $	\phi(s)$ and $ \psi(s)$ are as defined in \eqref{def:dirser} with finite abscissae of absolute convergence $\sigma_a$ and $\sigma_b$ respectively. 
			
			\item $\phi(s)$ and $\psi(s)$ satisfy a functional equation  of the type 
			\begin{equation}\label{generalfneqn}
				Q^sF(s)=\omega Q^{\d-s} \overline{G(\d-\bar{s})},
			\end{equation}
			for some $\delta>0$, 
			with 
			$F(s)= \phi(s)\prod_{i=1}^r \Gamma\left(\alpha_i s+\beta_i \right)$,  $G(s) = \psi(s)\prod_{i=1}^r \Gamma\left(\alpha_i s+\beta_i \right)$,  $r\in \mathbb{N}$, $ \alpha_i>0$,    $\beta_i \in \mathbb C$  with $\Re (\beta_i)\geq 0$, $ Q>0$ and {$\omega \in \mathbb C$} with $|\omega|=1$. More precisely, this means that 
			there exists a domain $D$ which is the exterior of a  compact set $S$, such that in $D$, there is  an analytic function $\chi(s)$ with the properties: 
			\begin{enumerate}[(i)]
				\item $\chi(s)=	Q^sF(s)$ for $\Re(s)> \sigma_a$ and $\chi(s)=	\omega Q^{\d-s} \overline{G(\d-\bar{s})}$ for $\Re(s)< \delta-\sigma_b$. 
				\item For some constant $\theta<1$, 
				\begin{equation} \label{chigrowthcond}
					\chi(s)=O\left(\exp\left[\exp\left(\frac{\theta \pi|s|}{\s_2-\s_1}\right)\right]\right),
				\end{equation} 
				uniformly in every strip $-\infty<\sigma_1\leq \Re(s) \leq \sigma_2<\infty$. 				
			\end{enumerate}

		\end{enumerate}
	\end{definition}	
	Given the functional equation \eqref{generalfneqn}, we define the vectors $\bs \a,\, \bs \b$ and the quantity $d_F$ by $\bs \a =(\a_1, \cdots, \a_r),\, \bs \b=(\b_1, \cdots, \b_r)$, and $d_F=2\sum_{i=1}^{r}\a_i$. We will also denote the vector $(\bar{\b}_1, \cdots, \bar{\b}_r)$ by $\bs{\bar{\b}}$.
	In the case of  functions in the Selberg class, $d_F$ is said to be the degree of the function, which acts as an invariant.  We will  denote $\frac{d_F}{2}$ by $d_F^{'}$ throughout this paper.
	In \cite{ChandrasekharanV}, Chandrasekharan and Narasimhan considered the same class of Dirichlet series with the stronger restriction: 
	\begin{align} \label{chigrowthCN}
		\lim_{|t|\to\infty}\chi(\sigma+it)=0,
	\end{align}
	uniformly in every bounded strip $-\infty<\sigma_1\leq\sigma\leq\sigma_2<\infty$. In \cite{ChandrasekharanV}, they derived an identity of the type \eqref{riesz sum} and showed that if $\Re b(n)\neq 0$  for at least one value of $n$, then 
$
		\Re[S_{\rho}(x)-Q_{\rho}(x)]=
		\Omega_{\pm}\left(x^{\theta}\right),
$
	where $S_\rho$ and $Q_\rho$ are  as  in \eqref{riiden}, \eqref{def:Q rho} respectively  and  $\theta=\frac{1}{d_F}\left(d_F^{'}\delta+\rho(d_F-1)-1/2\right)$.

	The following two theorems are concerned with the equivalence of the functional equation and the modular relation for our class of Dirichlet series $\mathcal C$. Such equivalences have been  proved  by Bochner  \cite{bochner} and Kanemitsu, Tanigawa, Tsukada \cite{KanemitsuTanigawaTsukada}.   Results proving the modular relation from the functional equation have also been given  by Tsukada \cite{tsukada},  Kanemitsu, Tanigawa, Tsukada \cite{kanemitsu}. From each of these works, one can   derive the  corresponding result for a subset of functions in the class $\mathcal{C}$, namely those for which $\chi$ from Definition \ref{def:fn eqn of F} satisfies  the stricter  restriction \eqref{chigrowthCN}. 
	In  Theorems \ref{thm:FE implies mod rel1}, \ref{thm:mod rel1 implies FE} below, we prove the equivalence of the functional equation and the modular relation for the entire class $\mathcal C$. In particular, we consider the more relaxed growth condition  \eqref{chigrowthcond} on $\chi$.   
	\begin{theorem}
		\label{thm:FE implies mod rel1}
		Let $\phi(s)$ and $\psi(s)$ be two Dirichlet series in the class $\mathcal{C}$.   Then the functional equation 
		\begin{equation}\label{fn eqn}
			Q^sF(s)=\omega Q^{\d-s} \overline{G(\d-\bar{s})},
		\end{equation} where $F(s)= \phi(s)\prod_{i=1}^r \Gamma\left(\alpha_i s+\beta_i \right)$,  $G(s) = \psi(s)\prod_{i=1}^r \Gamma\left(\alpha_i s+\beta_i \right)$, implies the modular relation 
		\begin{equation}\label{mod relation1} 
			\sum_{n=1}^{\infty}a_nZ_{\bs\alpha, 		  \bs \beta}(\lambda_nx)=P(x)+\frac{\omega}{(xQ)^\delta}\sum_{n=1}^{\infty}\bar{b}_n Z_{\bs \alpha,\bs{\bar{\beta}}}\bigg(\frac{\mu_n}{Q^2x}\bigg), \hspace{1cm} (x>0),
		\end{equation}
		where  for $x,a>0$, 
		\begin{equation}
			Z_{\bs\alpha,\bs\beta}(x):=\frac{1}{2 \pi i}\int_{a-i\infty}^{a+i\infty} \prod_{i=1}^{r} \Gamma(\alpha_i s+ \beta_i) x^{-s}ds, \label{def:Z_alpha,beta}
		\end{equation}
		$P(x)$ is a residual function given by
		\begin{align} 
			P(x)= \frac{1}{2 \pi i}\int_{C} F(s) x^{-s}ds, 
			\label{resfnP(x)}
		\end{align}	
		and 	 $C$ denotes a circle of finite radius, lying  inside the domain $D$ and  containing all the singularities of $F(s)$. 
	\end{theorem}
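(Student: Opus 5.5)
The natural route is Mellin inversion followed by a contour shift, as in the classical proof of the $\theta$-transformation from the functional equation. Throughout I assume $\phi(s)=\sum a_n\lambda_n^{-s}$ and $\psi(s)=\sum b_n\mu_n^{-s}$, as in \eqref{def:dirser}.

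Fix $x>0$ and $c>\max(\sigma_a,0)$, with $c$ large enough that the line $\Re(s)=c$ lies in $D$ to the right of $S$. Expanding $\phi$ termwise and using \eqref{def:Z_alpha,beta} gives
\begin{equation}
\sum_{n=1}^{\infty}a_nZ_{\bs\alpha,\bs\beta}(\lambda_nx)=\frac{1}{2\pi i}\int_{c-i\infty}^{c+i\infty}F(s)x^{-s}\,ds .
\end{equation}
The interchange is justified by absolute convergence. On vertical lines, Stirling's formula makes $\prod_i\Gamma(\alpha_is+\beta_i)$ decay like $\exp(-\tfrac{\pi}{2}d_F'|t|)$ times a power of $|t|$, and $\phi$ is bounded on $\Re(s)=c$. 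One also checks that $Z_{\bs\alpha,\bs\beta}(y)$ does not depend on the choice of $a>0$, since the poles of the Gamma product lie in $\Re(s)\le0$ because $\Re(\beta_i)\ge0$. By the same Stirling bound, $Z_{\bs\alpha,\bs\beta}(y)$ decays rapidly as $y\to\infty$, which controls the tail.

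Next, shift the contour to $\Re(s)=c'$ with $c'<\delta-\sigma_b$, chosen so that $\delta-c'>\max(\sigma_b,0)$ and the line lies in $D$. The circle $C$ contains all singularities of $F$, that is, of $\chi(s)Q^{-s}$ in $D$ together with the compact set $S$. So the difference between the two vertical integrals equals the integral over $C$, namely $P(x)$, provided the horizontal segments at height $\pm T$ contribute nothing in the limit $T\to\infty$. On the new line I substitute the functional equation:
\begin{equation}
F(s)=\omega Q^{\delta-2s}\overline{G(\delta-\bar s)} .
\end{equation}
Writing $w=\delta-s$, so that $\Re(w)=\delta-c'>\sigma_b$, and expanding $\overline{\psi(\bar w)}=\sum\bar b_n\mu_n^{-w}$ termwise, I note that the conjugate of $\prod_i\Gamma(\alpha_i\bar w+\beta_i)$ is $\prod_i\Gamma(\alpha_iw+\bar\beta_i)$. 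The integral then becomes
\begin{equation}
\frac{\omega}{(xQ)^{\delta}}\sum_{n}\bar b_n\frac{1}{2\pi i}\int_{(\delta-c')}\prod_i\Gamma(\alpha_iw+\bar\beta_i)\Big(\frac{\mu_n}{Q^2x}\Big)^{-w}dw ,
\end{equation}
which is exactly the right-hand side of \eqref{mod relation1}. Absolute convergence again follows from Stirling.

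The main obstacle is the contour shift: showing that $\int F(s)x^{-s}\,ds$ over the horizontal segments $\sigma+iT$, $c'\le\sigma\le c$, tends to $0$ along some sequence $T\to\infty$. This needs a bound on $F$ throughout the strip, whereas we only know the weak growth \eqref{chigrowthcond} inside the strip and exponential decay on its two boundary lines. I would handle this with a Phragmén--Lindelöf argument of the type used by Chandrasekharan--Narasimhan, in the form that allows double-exponential growth with $\theta<1$ relative to the strip width. Concretely, I would consider
\begin{equation}
\chi(s)\prod_i\Gamma(\alpha_is+\beta_i)^{-1}e^{\epsilon s^2}
\quad\text{or, more simply,}\quad
\chi(s)\,e^{\frac{\pi}{2}d_F'|t|}\,(\text{polynomial})^{-1}.
\end{equation}
The first step is to reduce to a function bounded on both boundary lines, comparing $\chi$ with the Gamma factor through the Stirling asymptotic $\prod_i\Gamma\sim|t|^{A}e^{-\pi d_F'|t|/2}$. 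Multiplying by $e^{\eta s^2}$-type factors (or by $\cos$-type factors) then gives $\chi(\sigma+iT)=O(|T|^{A'}e^{-\pi d_F'|T|/2})$ uniformly in the strip. Since $x^{-s}$ is bounded on the strip, the horizontal integrals vanish. To make the Phragmén--Lindelöf step clean, I would widen the strip slightly beyond $[c',c]$, which the condition $\theta<1$ permits.
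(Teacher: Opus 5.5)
Your proposal is correct and follows the paper's proof essentially step for step: Mellin inversion of $F(s)x^{-s}$ on a right vertical line, then a rectangle shift to $\Re(s)=\delta-a$ that picks up $P(x)$ from the circle $C$. Like the paper, you then use the Chandrasekharan--Narasimhan (Littlewood) Phragm\'en--Lindel\"of lemma for the horizontal segments, and finish with the functional equation, the substitution $s\mapsto\delta-s$, and termwise expansion of $\overline{\psi(\bar s)}$. One simplification: the paper applies that lemma directly to $F(s)x^{-s}=\chi(s)(Qx)^{-s}$, which is already $o(1)$ on both boundary lines by Stirling and absolute convergence, so your stronger exponential-decay bound is unnecessary; moreover, of the auxiliary factors you list only a $\cos$-type one is adequate, since $e^{\epsilon s^2}$ cannot absorb double-exponential growth and $e^{\frac{\pi}{2}d_F'|t|}$ is not holomorphic.
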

	\begin{theorem} \label{thm:mod rel1 implies FE}
		The modular relation \eqref{mod relation1} implies the functional equation \eqref{fn eqn}.
	\end{theorem}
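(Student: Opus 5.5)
Starting from \eqref{mod relation1}, I will take Mellin transforms of both sides in $x$ and identify the resulting meromorphic functions. By Mellin inversion, \eqref{def:Z_alpha,beta} gives
\[
\int_0^\infty Z_{\bs\alpha,\bs\beta}(x)x^{s-1}\,dx=\prod_{i=1}^r\Gamma(\alpha_i s+\beta_i).
\]
This holds for $\Re(s)>0$, which the condition $\Re(\beta_i)\ge 0$ keeps valid. For $\Re(s)>\sigma_a$ I can then interchange sum and integral, by absolute convergence and the exponential decay of $Z_{\bs\alpha,\bs\beta}$ at infinity (Stirling). This yields
\[
\int_0^\infty\Big(\sum_n a_nZ_{\bs\alpha,\bs\beta}(\lambda_nx)\Big)x^{s-1}dx=F(s).
\]
Similarly, substituting $x\mapsto 1/(Q^2x)$, the Mellin transform of the dual series $\frac{\omega}{(xQ)^\delta}\sum_n \bar b_n Z_{\bs\alpha,\bs{\bar\beta}}(\mu_n/(Q^2x))$ equals $\omega Q^{\delta-2s}\overline{G(\delta-\bar s)}$ for $\Re(s)<\delta-\sigma_b$. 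Here I use that $\overline{Z_{\bs\alpha,\bs{\bar\beta}}(y)}$ has Mellin transform $\prod\Gamma(\alpha_i\bar s+\bar\beta_i)$, conjugated appropriately.

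The problem is that these two transforms live in disjoint half-planes, so I cannot simply equate them. I will follow the Bochner/Chandrasekharan--Narasimhan strategy instead. Split the integral at $x=1$ and define
\[
\chi(s)=\int_1^\infty \Theta(x)x^{s-1}dx+\int_0^1\Theta(x)x^{s-1}dx,
\]
where $\Theta(x)=\sum a_nZ(\lambda_nx)$ is scaled by $Q^s$. The first piece is entire, because $\Theta$ decays exponentially at $\infty$. On $(0,1)$ I substitute \eqref{mod relation1}: the dual series term becomes an integral over $(1,\infty)$ in the variable $1/(Q^2x)$ with exponential decay, hence entire. The remaining term is $\int_0^1P(x)x^{s-1}dx$. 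Writing $P(x)=\frac{1}{2\pi i}\int_C F(w)x^{-w}dw$ and integrating in $x$ first gives
\[
\frac{1}{2\pi i}\int_C\frac{F(w)}{s-w}\,dw
\]
for $s$ outside $C$, which is analytic in the exterior $D$ of the compact set enclosed by $C$.

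This shows $Q^sF(s)$ continues analytically to $D$. Running the same splitting on the dual side, or reading the representation symmetrically, shows the continuation equals $\omega Q^{\delta-s}\overline{G(\delta-\bar s)}$ for $\Re(s)<\delta-\sigma_b$. That gives property (i) of Definition~\ref{def:fn eqn of F}.

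The main obstacle is making this rigorous and establishing the growth condition \eqref{chigrowthcond}. Concretely, I must justify that $P(x)$ is what remains after moving contours, and that $P(x)=O(x^{-c})$ as $x\to0$ so that the Mellin transform of $P$ on $(0,1)$ converges for $\Re(s)$ large. I will get this from compactness of $C$: $|P(x)|\le \max_{C}|F|\,x^{-R}$ for $x<1$. For the growth bound, the two entire integrals are bounded in vertical strips by $\int_1^\infty|\Theta(x)|x^{\sigma_2-1}dx$, so they are uniformly bounded. The residual term is bounded away from $C$ by $\frac{1}{2\pi i}\int_C |F(w)|/|s-w|\,|dw|$. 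So $\chi$ is in fact bounded in every strip away from $S$, which is much stronger than \eqref{chigrowthcond}. A final check is that the interchange of sum and integral on $(1,\infty)$ is legitimate, which follows from absolute convergence of the series for $x\ge1$ using the decay of $Z_{\bs\alpha,\bs\beta}$ and polynomial growth of $a_n$ (implied by finite $\sigma_a$).
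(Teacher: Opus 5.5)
Your proposal is correct and is essentially the paper's argument: split the Mellin integral of $\sum a_n Z_{\bs\alpha,\bs\beta}(\lambda_n x)$ at $x=1$, use \eqref{mod relation1} on $(0,1)$ so that everything except the $P$-term becomes an entire integral over a half-line, do the same on the dual side, and match the two $P$-contributions by analytic continuation into the exterior of $C$. The differences are only in presentation. You evaluate $\int_0^1 P(x)x^{s-1}\,dx$ and $-\int_1^\infty P(x)x^{s-1}\,dx$ explicitly as the Cauchy integral $\frac{1}{2\pi i}\int_C \frac{F(w)}{s-w}\,dw$, whereas the paper simply invokes Definition~\ref{def:res fn}; you also check the growth condition \eqref{chigrowthcond}, which the paper does not address.
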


	In the next result, we show that the functional equation implies a Riesz-sum identity for series in  our class $\mathcal{C}$, with the smallest possible value of $\rho$, analogous to  the $\rho=0$ case in \eqref{eq:risu}. Such a result was proved by Chandrasekharan and Narasimhan  \cite[eq. (1.3)]{ChandrasekharanV}  for Dirichlet series satisfying functional equations with multiple Gamma factors, with the  growth condition \eqref{chigrowthCN} on  $\chi(s)$. Our result below holds for a larger class of functions, since  $\chi$ fulfills the more relaxed growth condition \eqref{chigrowthcond}. 
  
	\begin{theorem} \label{thm:FE implies Rieszsum}
		Let $\phi(s)$ and $\psi(s)$ be two Dirichlet series satisfying Definition \ref{def:fn eqn of F}. Let $a> \max\{0,\s_a,\s_b\}$ be sufficiently large so that all the singularities of $F(s)= \phi(s)\prod_{i=1}^r \Gamma\left(\alpha_i s+\beta_i \right)$ are contained in the strip $\d-a<\Re(s)<a$. Then the functional equation  $$Q^sF (s)=\omega Q^{\delta-s}\overline{G(\delta-\bar{s})},$$ where  $G(s) = \psi(s)\prod_{i=1}^r \Gamma\left(\alpha_i s+\beta_i \right)$, implies the identity
		\begin{multline}\label{riesz sum}
			\frac{1}{\Gamma(\rho+1)}\sideset{}{'}\sum_{\lambda_n \leq x} a_{n}(x-\lambda_n)^\rho\\
			=Q_\rho(x)+\bigg(\frac{\omega x^{\delta+\rho}}{Q^\delta}\bigg) \sum_{n=1}^{\infty}\bar{b}_n\frac{1}{2 \pi i}\int_{a-i\infty}^{a+i\infty} \frac{\Gamma(\delta-s) \prod_{i=1}^{r}\Gamma(\alpha_i s+\bar{\beta}_i)}{\Gamma(1+\delta-s+\rho)\prod_{i=1}^{r}\Gamma(\alpha_i(\delta-s)+\beta_i)}\bigg(\frac{\mu_nx}{Q^2}\bigg)^{-s}ds	, 
		\end{multline}
		for $x>0$, and $\r> (2\s_b-\d)d_F^{'}-\frac{1}{2}$. Here,  $d_F^{'}=\sum_{i=1}^{r}\a_i$ and $Q_\rho(x)$ is a residual function given by
		\begin{equation}
			\label{resfnQ_rho}
			Q_\rho(x)=\frac{1}{2 \pi i}\int_{C}\frac{\phi(s)\Gamma(s)x^{s+\rho}}{\Gamma(s+\rho+1)} ds,
		\end{equation} 
		where $C=C_a$ denotes a circle of finite radius, lying inside the strip $\d-a<\Re(s)<a$,  containing all the singularities of $F(s)$, such that all the singularities of $\phi(s)\Gamma(s)x^{s+\rho}$ which lie in this strip are contained inside $C$.
		
		
	\end{theorem}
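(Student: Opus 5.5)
Starting from Perron's formula for Riesz means, for $a>\max\{0,\s_a,\s_b\}$ we have
\begin{equation}
\frac{1}{\Gamma(\rho+1)}\sideset{}{'}\sum_{\lambda_n\le x}a_n(x-\lambda_n)^\rho=\frac{1}{2\pi i}\int_{a-i\infty}^{a+i\infty}\frac{\phi(s)\Gamma(s)x^{s+\rho}}{\Gamma(s+\rho+1)}\,ds .
\end{equation}
For $\rho>0$ this integral converges absolutely. For $\rho=0$ it is the usual truncated Perron formula, with the prime accounting for the half term. The plan is to shift the line of integration from $\Re(s)=a$ to $\Re(s)=\d-a$. The poles of $\phi(s)\Gamma(s)x^{s+\rho}$ in the strip $\d-a<\Re(s)<a$ lie inside $C$ by hypothesis, so their residues assemble into $Q_\rho(x)$ as in \eqref{resfnQ_rho}. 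On the new line we apply the functional equation in the form
\begin{equation}
\phi(s)=\omega Q^{\d-2s}\,\overline{\psi(\d-\bar s)}\prod_{i=1}^r\frac{\Gamma(\a_i(\d-s)+\bar\b_i)}{\Gamma(\a_i s+\b_i)}.
\end{equation}
We then expand $\overline{\psi(\d-\bar s)}=\sum\bar b_n\mu_n^{s-\d}$, which converges absolutely because $\Re(\d-s)=a>\s_b$. After that we substitute $s\mapsto\d-s$, integrate term by term, and collect the powers $x^{\d+\rho}Q^{-\d}(\mu_n x/Q^2)^{-s}$. This yields exactly the integrand in \eqref{riesz sum}, with $\b_i$ and $\bar\b_i$ interchanged as displayed.

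The contour shift has to be justified. Take a rectangle with horizontal sides at $\Im(s)=\pm T$. Under the weak growth condition \eqref{chigrowthcond} we cannot simply let the horizontal integrals tend to zero pointwise. Instead we use a Phragmén–Lindelöf argument in the strip $\d-a\le\Re(s)\le a$, and this is precisely why the double exponential with $\theta<1$ is allowed.

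On the right edge, $\phi$ is bounded and Stirling gives $\Gamma(s)/\Gamma(s+\rho+1)\ll|t|^{-\rho-1}$. On the left edge, the functional equation together with Stirling shows that the Gamma quotient grows like $|t|^{d_F'(2a-\d)}$ times a bounded factor, so the integrand there is $\ll|t|^{-\rho-1+d_F'(2\sigma-\d)}$ in the appropriate normalization. Phragmén–Lindelöf then gives a polynomial bound $\chi(s)\ll|t|^{A}$ throughout the strip, hence a polynomial bound for $\phi(s)\Gamma(s)/\Gamma(s+\rho+1)$. This lets us choose a sequence $T_k\to\infty$ along which the horizontal integrals vanish, at least once $\rho$ is large.

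To reach the stated range $\rho>(2\s_b-\d)d_F'-\tfrac12$ we expect the main obstacle. There are two options. The first is to work termwise: take $a$ slightly larger than $\s_b$ on the dual side and evaluate each integral $\int\frac{\Gamma(\d-s)\prod\Gamma(\a_is+\bar\b_i)}{\Gamma(1+\d-s+\rho)\prod\Gamma(\a_i(\d-s)+\b_i)}y^{-s}\,ds$ by Stirling asymptotics. Shifting this integral further to the right, its modulus is $\ll y^{-c}$ with the oscillatory factor $\exp(i\,d_F\,y^{1/d_F}\cdots)$, giving decay of order $\mu_n^{-(\rho+1/2)/d_F-\d/2\cdots}$. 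This makes $\sum\bar b_n(\cdot)$ converge absolutely exactly when $\rho>(2\s_b-\d)d_F'-\tfrac12$. The second is to first prove the identity for large $\rho$, where everything converges absolutely, and then descend to smaller $\rho$ by differentiating in $x$ (using $\frac{d}{dx}$ of the Riesz sum of order $\rho$ equals the sum of order $\rho-1$) together with uniform convergence of the dual series. Either way, the delicate step is the asymptotic (saddle point) estimate of these $G$-type kernels uniformly in $y$, combined with justifying the interchange of sum and integral on the shifted line. We would attempt the termwise approach first, bounding the kernel via Stirling on a line $\Re(s)=c$ chosen with $c$ close to the threshold.
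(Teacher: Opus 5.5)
Your plan is essentially the paper's proof: Perron's formula on $\Re(s)=a$, a shift to $\Re(s)=\delta-a$ with the enclosed residues forming $Q_\rho(x)$, Phragm\'en--Lindel\"of on the rectangle (valid once $\rho>(2a-\delta)d_F^{'}-1$), the functional equation followed by $s\mapsto\delta-s$, and the interchange of sum and integral justified via Lemma~\ref{lem:asym for I_rho}. The paper then takes your second option: it chooses $a=\sigma_b+\frac{1}{4d_F^{'}}+\frac{m}{2d_F^{'}}$ and differentiates the identity in $x$ $m$ times to reach $\rho>(2\sigma_b-\delta)d_F^{'}-\frac12$. One local fix: apply Phragm\'en--Lindel\"of to the integrand $f(s)=\phi(s)\Gamma(s)x^{s+\rho}/\Gamma(s+\rho+1)$ itself, not to $\chi$, because a polynomial bound for $\chi$ does not give one for $\phi$ (dividing by $\prod_i\Gamma(\alpha_i s+\beta_i)$ costs a factor of size $e^{\pi d_F^{'}|t|/2}$), whereas $f$ inherits the needed double-exponential a priori bound from $\chi$ since that Gamma product's reciprocal grows only exponentially in the strip.
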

	
	\begin{remark}
		The residual function $P(x)$ given in \eqref{resfnP(x)} is the sum of  residues of $F(s) x^{-s}$. This function $P(x)$ is independent of the choice of $a$ as long as $C$  contains all the singularities of $F(s)$.
		On the other hand, the residual function $Q_\rho(x)$ given in \eqref{resfnQ_rho}	is not invariant of the choice of $a$ as $C$ may not encircle all the poles of the integrand. Since the Gamma factor in the numerator of the integrand has infinitely many poles, the number of residues contributed by this factor may vary depending upon $a$. We also remark that in all our results, $C$ can be taken to be a curve with bounded interior rather than a circle with finite radius.
	\end{remark}
	
	\begin{remark}
		Although \eqref{riesz sum} does not appear to be a closed form, one actually has an asymptotic formula for the integral on the right-hand side, given by Lemma \ref{lem:asym for I_rho}. This allows one to express the right-hand side in terms of cosine functions. For practical purposes of computation, this is as convenient as having a Vorono\"{\i}
		summation formula in terms of Bessel functions.
	\end{remark}
	
	The main contribution of this paper is to show that the Riesz sum identity \eqref{riesz sum} implies the functional equation. Our key idea is to prove this by  introducing an auxiliary modular-type relation, different from the known modular relation \eqref{mod relation1}.  
	This new modular-type relation is given by 
	\begin{equation}\label{mod relation2} \sum_{n=1}^{\infty}a_nY_{\bs \alpha,\bs\beta}(\lambda_nx)=P_1(x)+\frac{\omega}{(xQ)^\d}\sum_{n=1}^{\infty}\bar{b}_n X_{\bs\alpha,\bs{\bar{\beta}}}\bigg(\frac{\mu_n}{Q^2x}\bigg), \hspace{1cm} x>0,
	\end{equation}
	where $P_1(x)$ is a residual function which will be defined precisely later,  
	\begin{equation}
		Y_{\bs \alpha,\bs\beta}(x):=\frac{1}{2 \pi i}\int_{a-i\infty}^{a+i\infty} \G(s)\prod_{i=1}^{r} \Gamma(\alpha_i s+ \beta_i) x^{-s}ds,
		\qquad (x,a>0)  \label{def:Y_alpha,beta}
	\end{equation}
	and
	\begin{equation}
		X_{\bs\alpha,\bs{\bar{\beta}}}(x):=\frac{1}{2 \pi i}\int_{a-i\infty}^{a+i\infty} \G(\delta-s)\prod_{i=1}^{r} \Gamma(\alpha_i s+ \bar{\beta_i}) x^{-s}ds,
		\qquad (x,a>0).	\label{def:X_alpha,beta}
	\end{equation}
	
	It is this auxiliary modular-type relation which acts as an intermediate, allowing us to move from the Riesz sum identity \eqref{riesz sum} to the functional equation   \eqref{fn eqn}.  In the following theorem we first derive this new modular-type relation from the Riesz sum identity.  Subsequently,  we will  deduce the functional equation \eqref{fn eqn} from this using similar techniques as in the proof of Theorem \ref{thm:mod rel1 implies FE}.  
	
	\begin{theorem} \label{thm:riesz sum implies modreln 2}
		Let $a> \max\{0,\s_a,\s_b\}$ be sufficiently large so that all the singularities of $F(s)= \phi(s)\prod_{i=1}^r \Gamma\left(\alpha_i s+\beta_i \right)$ are contained in the strip $\d-a<\Re(s)<a$. Let $C=C_a$ be a curve as in the statement of Theorem \ref{thm:FE implies Rieszsum}.  Then, the Riesz sum identity \eqref{riesz sum} implies the modular relation 
		\begin{equation} \label{aux mod relation}
			\sum_{n=1}^{\infty}a_nY_{\bs \alpha,\bs\beta}(\lambda_nx)=P_1(x)+\frac{\omega}{(xQ)^\d}\sum_{n=1}^{\infty}\bar{b}_n X_{\bs\alpha,\bs{\bar{\beta}}}\bigg(\frac{\mu_n}{Q^2x}\bigg), \hspace{1cm} x>0,
		\end{equation}
		where $Y_{\bs \alpha,\bs\beta}(x)$ and $X_{\bs\alpha,\bs{\bar{\beta}}}(x)$ are as defined as in \eqref{def:Y_alpha,beta} and \eqref{def:X_alpha,beta} respectively and $P_1(x)$ is a residual function given by 
		\begin{equation}
			P_1(x)= \frac{1}{2 \pi i}\int_{C} \G(s) F(s) x^{-s}ds.\label{def:res fn P_1(x)}	
		\end{equation} 
	\end{theorem}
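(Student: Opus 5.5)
The plan is to integrate the Riesz sum identity \eqref{riesz sum} against a suitable kernel in $x$, so that the left side becomes $\sum a_n Y_{\bs\alpha,\bs\beta}(\lambda_n y)$ and the right side becomes the residual term plus the dual series with kernel $X$. The natural device is Mellin inversion. First, for $\Re(s)$ large, the left side of \eqref{riesz sum} has Mellin transform
\[
\int_0^\infty \frac{1}{\Gamma(\rho+1)}\sideset{}{'}\sum_{\lambda_n\le x}a_n(x-\lambda_n)^\rho\, x^{-s-\rho-1}\,dx=\frac{\Gamma(s)}{\Gamma(s+\rho+1)}\phi(s).
\]
Hence $\frac{\Gamma(s)\phi(s)}{\Gamma(s+\rho+1)}$ is recovered as the Mellin transform of the left side, and multiplying by $\Gamma(s+\rho+1)\prod_i\Gamma(\alpha_i s+\beta_i)$ produces $\Gamma(s)F(s)$, which is exactly the Mellin transform of $\sum a_n Y_{\bs\alpha,\bs\beta}(\lambda_n x)$.

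Concretely, I would write
\[
\sum_n a_n Y_{\bs\alpha,\bs\beta}(\lambda_n y)=\frac{1}{2\pi i}\int_{(a)}\Gamma(s)F(s)y^{-s}\,ds ,
\]
with termwise interchange justified by absolute convergence for $a>\sigma_a$ and Stirling decay of the Gamma product. The multiplier $\Gamma(s+\rho+1)\prod\Gamma(\alpha_is+\beta_i)y^{-s}$ is then realized as an integral operator acting in $x$. Equivalently, I would take the Mellin transform of both sides of \eqref{riesz sum} along $\Re(s)=a$, multiply by that factor, and invert.

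On the right side there are two contributions.
\begin{enumerate}[(i)]
\item \textbf{Residual term.} Since $Q_\rho(x)$ is a contour integral over $C$ of $\frac{\phi(s)\Gamma(s)}{\Gamma(s+\rho+1)}x^{s+\rho}$, the same multiplier turns it into $\frac{1}{2\pi i}\int_C\Gamma(s)F(s)y^{-s}ds=P_1(y)$. Here it matters that $C$ encloses the singularities of $\phi\Gamma$ in the strip, and the factor $\Gamma(s+\rho+1)$ cancels the denominator.
\item \textbf{Dual term.} The $n$-th summand is a Mellin–Barnes integral in the variable $\mu_n x/Q^2$. After the transform and multiplication, the factors $\Gamma(1+\delta-s+\rho)$ and $\prod\Gamma(\alpha_i(\delta-s)+\beta_i)$ in the denominator are cancelled by the multiplier evaluated at the reflected point. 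What remains is $\Gamma(\delta-s)\prod\Gamma(\alpha_is+\bar\beta_i)(\mu_n/(Q^2y))^{-s}$, with the prefactor $\omega(yQ)^{-\delta}$. This is precisely $\frac{\omega}{(yQ)^\delta}\bar b_n X_{\bs\alpha,\bs{\bar\beta}}(\mu_n/(Q^2 y))$.
\end{enumerate}
One must track the change of variables: the inner variable is $s$ in the dual integral, and the Mellin variable in $x$ pairs with $\delta-s$ through the factor $x^{\delta+\rho-s}$.

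A cleaner way to implement this is to avoid Mellin transforms of non-integrable objects. I would instead differentiate and integrate directly. Choose $\rho$ large (allowed, since \eqref{riesz sum} for one $\rho$ yields it for larger $\rho$ by integration in $x$), so that both sides are continuous with polynomial growth. Then apply the kernel
\[
\frac{1}{2\pi i}\int_{(c)}\Gamma(w+\rho+1)\prod_i\Gamma(\alpha_iw+\beta_i)\,(\cdot)^{-w}dw,
\]
realized as an integral $\int_0^\infty(\cdot)\,K(x,y)\,dx$ with $K$ an inverse Mellin transform of Gamma products, which decays rapidly. Plugging the Riesz sum into this integral and interchanging sums and integrals then gives the three pieces above.

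The main obstacle is justifying these interchanges and the convergence of the dual series. For $\rho$ near the threshold $(2\sigma_b-\delta)d_F'-\frac12$, the series in \eqref{riesz sum} converges only conditionally-ish, since the integrand decays like $|t|^{(\delta-2a)d_F'-\rho-1}$ via Stirling. I would handle this by first working with large $\rho$ and shifting the line of integration in the dual integrals to $\Re(s)$ large negative or positive as needed, using the Stirling asymptotic (Lemma \ref{lem:asym for I_rho}-type estimates) to get absolute convergence uniformly on compacts. Finally, I would use analyticity in $y$ to conclude for all $y>0$.
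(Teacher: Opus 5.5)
Your proposal is essentially the paper's proof. The paper multiplies \eqref{riesz sum} by $A(x)=y^{\rho+1}\frac{1}{2\pi i}\int_{(k)}\Gamma(s)\prod_{i=1}^r\Gamma(\alpha_i(s-\rho-1)+\beta_i)(xy)^{-s}\,ds$, which after $s=w+\rho+1$ is exactly $x^{-\rho-1}$ times the inverse Mellin transform, evaluated at $xy$, of your multiplier $\Gamma(w+\rho+1)\prod_i\Gamma(\alpha_iw+\beta_i)$; it then integrates over $0<x<\infty$ and evaluates the three pieces just as you describe, with $\Gamma(1+\delta-s+\rho)\prod_i\Gamma(\alpha_i(\delta-s)+\beta_i)$ cancelled at the reflected point in the dual term.

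One caution applies equally to the paper. When some $\beta_i=0$, the kernel behaves like a nonzero multiple of $x^{-\rho-1}$ as $x\to0^+$ (times a power of $\log(1/x)$ if several $\beta_i$ vanish). Meanwhile $Q_\rho(x)$ typically contains a term $c\,x^{\rho}$ from the pole of $\Gamma(s)$ at $s=0$ (e.g.\ $c=\zeta(0)/\Gamma(\rho+1)$ for $\phi=\zeta$). So $\int_0^\infty Q_\rho(x)A(x)\,dx$ and the dual-term integral diverge separately at $0$; only their sum, which vanishes on $(0,\lambda_1)$, is integrable. Hence identifying these two pieces with $P_1(y)$ and the $X_{\bs\alpha,\bs{\bar\beta}}$-series needs a regularization or analytic-continuation argument, not a literal termwise interchange.
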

	
	In the following theorem we show that the functional equation \eqref{fn eqn} can be obtained from this auxiliary modular relation \eqref{aux mod relation}. 
	\begin{theorem}\label{thm:modreln2 implies FE}
		The modular relation \eqref{aux mod relation} implies the functional equation \eqref{fn eqn}.
	\end{theorem}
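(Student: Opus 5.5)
We follow the Bochner/Chandrasekharan--Narasimhan converse scheme, as in the proof of Theorem \ref{thm:mod rel1 implies FE}. Take Mellin transforms of both sides of \eqref{aux mod relation} and show that the two resulting expressions define a single analytic function $\chi$ on the exterior of a compact set. Throughout, write $\Delta(s)=\prod_{i=1}^r\Gamma(\alpha_i s+\beta_i)$.

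\textbf{Left-hand side.} By Mellin inversion of \eqref{def:Y_alpha,beta}, $\int_0^\infty Y_{\bs\alpha,\bs\beta}(y)y^{s-1}\,dy=\Gamma(s)\Delta(s)$ for $\Re(s)>0$. Since $Y_{\bs\alpha,\bs\beta}$ decays exponentially at infinity and is bounded near $0$, for $\Re(s)>\max\{0,\sigma_a\}$ we may interchange sum and integral. This gives
\begin{equation*}
\int_0^\infty\Big(\sum_{n}a_nY_{\bs\alpha,\bs\beta}(\lambda_nx)\Big)x^{s-1}\,dx=\Gamma(s)F(s).
\end{equation*}
\textbf{Right-hand side.} The substitution $x\mapsto \mu_n/(Q^2x)$, combined with \eqref{def:X_alpha,beta}, shows that the Mellin transform of $\omega(xQ)^{-\delta}\sum_n\bar b_nX_{\bs\alpha,\bs{\bar\beta}}(\mu_n/(Q^2x))$ equals
\begin{equation*}
\omega Q^{\delta-2s}\,\Gamma(s)\,\overline{\psi(\delta-\bar s)}\,\overline{\Delta(\delta-\bar s)}\quad\text{for }\Re(s)<\delta-\sigma_b.
\end{equation*}
Here we use $\overline{\Gamma(\delta-\bar s')}=\Gamma(\delta-s)$ with $s'=\delta-s$ to recover the $\Gamma(s)$ factor. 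This is exactly $Q^{-s}\Gamma(s)\,\omega Q^{\delta-s}\overline{G(\delta-\bar s)}$.

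\textbf{Analytic continuation.} As in Bochner's argument, split the $x$-integral at $x=1$. On $(0,1)$, use the right side of \eqref{aux mod relation}; on $(1,\infty)$, use the left side. The residual term $P_1(x)=\frac{1}{2\pi i}\int_C\Gamma(s)F(s)x^{-s}ds$ contributes $\frac{1}{2\pi i}\int_C \frac{\Gamma(w)F(w)}{s-w}\,dw$ from each piece (with opposite orientations and ranges), and this is analytic outside $C$. The resulting function
\begin{equation*}
\Phi(s)=\int_1^\infty(\text{LHS})x^{s-1}dx+\int_1^\infty(\text{RHS}\ \text{at}\ 1/x)\,x^{-s-1}dx+(\text{$C$-integral terms})
\end{equation*}
is entire away from $C$. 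It equals $Q^{-s}\Gamma(s)\cdot Q^sF(s)$ for $\Re s>\sigma_a$, and $Q^{-s}\Gamma(s)\cdot\omega Q^{\delta-s}\overline{G(\delta-\bar s)}$ for $\Re s<\delta-\sigma_b$. Dividing by $Q^{-s}\Gamma(s)$, which has no zeros, gives $\chi(s)$, analytic outside $C$ except possibly at poles of $\Gamma(s)$ at nonpositive integers. These poles are removable: at such a point the right-hand representation $\omega Q^{\delta-s}\overline{G(\delta-\bar s)}$, valid in the left half plane, is itself regular after division. Alternatively, enlarge $C$ and use the hypothesis on $a$ that the relevant singularities lie inside $C$. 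This step needs care, since there are infinitely many negative-integer poles and only finitely many can lie inside $C$. We will handle it by first working with $\Gamma(s)F(s)$ and noting that $\Phi(s)/\Gamma(s)$ agrees with the functional-equation side on a half plane containing those points.

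\textbf{Main obstacle: the growth condition.} It remains to verify \eqref{chigrowthcond} in vertical strips. The split integrals over $(1,\infty)$ are bounded in strips, since the exponential decay of $Y$ and $X$ makes the $\sum\int$ absolutely convergent. The $C$-terms are $O(1)$ away from $C$. Hence $\Gamma(s)\chi(s)Q^{-s}$ is bounded in any strip away from $C$, and we need a lower bound for $1/|\Gamma(s)|$. Stirling gives $1/|\Gamma(\sigma+it)|\ll e^{\pi|t|/2}|t|^{1/2-\sigma}$, which is far below $\exp(\exp(\theta\pi|s|/(\sigma_2-\sigma_1)))$. Thus $\chi$ satisfies \eqref{chigrowthcond} for any $\theta>0$, and $\chi$ gives the functional equation \eqref{fn eqn} in the sense of Definition \ref{def:fn eqn of F}. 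The delicate point is justifying the decay of $X_{\bs\alpha,\bs{\bar\beta}}(y)$ as $y\to\infty$ and its behavior as $y\to0$. Because of the factor $\Gamma(\delta-s)$, $X$ does not decay exponentially. We expect to control it by shifting the contour in \eqref{def:X_alpha,beta}, using the convergence built into the hypothesis of Theorem \ref{thm:riesz sum implies modreln 2}.
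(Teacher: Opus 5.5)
Your route is the paper's: Mellin-transform both sides of \eqref{aux mod relation}, split at $x=1$, use the relation on $(0,1)$, and match. Transforming the right-hand side directly, as you do, is equivalent to the paper's comparison of $J_1(s)$ with $J_2(\delta-s)$, and your check of \eqref{chigrowthcond} is something the paper omits. The gap is the point you leave open about $X=X_{\bs\alpha,\bs{\bar{\beta}}}$, and it already breaks claims you made earlier.

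Moving the line in \eqref{def:X_alpha,beta} to the right past the first pole $s=\delta+k_0>a$ of $\Gamma(\delta-s)$ gives $X(y)\sim c\,y^{-\delta-k_0}$ as $y\to\infty$, with $c=\frac{(-1)^{k_0}}{k_0!}\prod_i\Gamma(\alpha_i(\delta+k_0)+\bar\beta_i)\neq 0$. Set $\Psi(u)=\omega u^{\delta}Q^{-\delta}\sum_n\bar b_nX(\mu_nu/Q^2)$. Then the tail $\int_1^\infty\Psi(u)u^{-s-1}\,du$ converges, in general, only for $\Re s>-k_0$. So your $\Phi(s)$ is not analytic on all of $\mathbb{C}\setminus C$, and ``exponential decay of $Y$ and $X$'' cannot be used for the growth bound. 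Likewise, $\int_0^\infty X(y)y^{w-1}\,dy=\Gamma(\delta-w)\prod_i\Gamma(\alpha_iw+\bar\beta_i)$ holds only in the pole-free vertical strip containing $\Re w=a$. Your Mellin formula for the right-hand side is therefore valid only in a thin strip around $\Re s=\delta-a$, not on all of $\Re s<\delta-\sigma_b$. The paper's ``we repeat this procedure for $X$'' passes over the same point, so you were right to flag it, but flagging is not resolving.

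The repair is short and uses nothing from Theorem \ref{thm:riesz sum implies modreln 2} beyond $a>\sigma_b$.
\begin{itemize}
\item[(a)] From \eqref{def:X_alpha,beta}, $|X(y)|\le c_a y^{-a}$ for all $y>0$. Hence $\Psi(u)\ll u^{\delta-a}\sum_n|b_n|\mu_n^{-a}$, and the tail is analytic on $\Re s>\delta-a$ and bounded on $\Re s\ge\delta-a+\eta$. The split representation thus continues $\Gamma(s)F(s)$ only to $\{\Re s>\delta-a\}$ outside $C$.
\item[(b)] Since $a>\sigma_b$, this half-plane overlaps $\Re s<\delta-\sigma_b$. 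Take the nonempty strip where $\delta-a<\Re s<\min\{\delta-\sigma_b,\ \min_{w\in C}\Re w\}$ and $\Re(\delta-s)$ lies in the pole-free strip around $a$. There, compute $\int_0^\infty\Psi(1/x)x^{s-1}\,dx$ by splitting at $1$ and writing $\Psi(1/x)=\Phi(x)-P_1(x)$ on $(1,\infty)$. You get exactly your three-term expression, and it equals $\omega Q^{\delta-2s}\Gamma(s)\overline{G(\delta-\bar s)}$.
\item[(c)] That function is meromorphic on $\Re s<\delta-\sigma_b$ and regular after division by $Q^{-s}\Gamma(s)$. This is what glues $\chi$ on the exterior of $C$, and it also settles the negative-integer poles you worried about.
\item[(d)] For \eqref{chigrowthcond}, fix $0<\eta<a-\sigma_b$. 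On $\Re s\ge\delta-a+\eta$ use the split representation; dividing by $\Gamma(s)$ costs only $e^{\pi|t|/2}|t|^{O(1)}$. On $\Re s\le\delta-a+\eta$ use $\chi(s)=\omega Q^{\delta-s}\overline{G(\delta-\bar s)}$, which is bounded in vertical strips there.
\end{itemize}
With these corrections the argument goes through.
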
 
	
	The following Corollary follows from Theorems \ref{thm:FE implies mod rel1}, \ref{thm:mod rel1 implies FE}, \ref{thm:FE implies Rieszsum}, \ref{thm:riesz sum implies modreln 2}, and \ref{thm:modreln2 implies FE}. 
	\begin{corollary} \label{cor:equiv of identities}
		The functional equation \eqref{fn eqn}, modular relations \eqref{mod relation1} and \eqref{aux mod relation}, and the Riesz sum identity \eqref{riesz sum} are equivalent.
	\end{corollary}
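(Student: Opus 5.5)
The corollary is a matter of chaining the five preceding theorems, so the plan is purely logical. Label the statements: (FE) the functional equation \eqref{fn eqn}, (M1) the modular relation \eqref{mod relation1}, (M2) the auxiliary modular relation \eqref{aux mod relation}, and (R) the Riesz sum identity \eqref{riesz sum}. We establish the cycle of implications
\[
\text{(FE)} \Rightarrow \text{(R)} \Rightarrow \text{(M2)} \Rightarrow \text{(FE)},
\]
together with the two-way link (FE) $\Leftrightarrow$ (M1). Together these make all four statements mutually equivalent.

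\emph{Step 1.} Theorem \ref{thm:FE implies mod rel1} gives (FE) $\Rightarrow$ (M1), and Theorem \ref{thm:mod rel1 implies FE} gives (M1) $\Rightarrow$ (FE).

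\emph{Step 2.} Theorem \ref{thm:FE implies Rieszsum} gives (FE) $\Rightarrow$ (R). This holds for every admissible $\rho> (2\s_b-\d)d_F^{'}-\frac{1}{2}$ and every sufficiently large $a$ with the corresponding curve $C_a$.

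\emph{Step 3.} Theorem \ref{thm:riesz sum implies modreln 2} gives (R) $\Rightarrow$ (M2), and Theorem \ref{thm:modreln2 implies FE} gives (M2) $\Rightarrow$ (FE).

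The only care needed is that the hypotheses match along the chain. All statements concern the same data: the pair $\phi,\psi$, the parameters $\bs\a,\bs\b,Q,\omega,\d$, and the same growth and singularity setting of Definition \ref{def:fn eqn of F}. The same choice of $a$ and curve $C=C_a$ must be used in Theorems \ref{thm:FE implies Rieszsum} and \ref{thm:riesz sum implies modreln 2}.

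One point requires checking. We must verify that Theorem \ref{thm:riesz sum implies modreln 2} only needs (R) for some admissible $\rho$, and not for a specific one outside the range that Theorem \ref{thm:FE implies Rieszsum} provides. Equivalently, we interpret (R) as ``the identity holds for all admissible $\rho$'' (or for one fixed admissible $\rho$) consistently in both directions. I expect this consistency of quantifiers over $\rho$ and $a$ to be the only place requiring attention; there is no new analysis involved.
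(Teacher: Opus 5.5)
Your proposal is correct and is exactly the paper's argument: the corollary is stated as an immediate consequence of chaining Theorems \ref{thm:FE implies mod rel1}, \ref{thm:mod rel1 implies FE}, \ref{thm:FE implies Rieszsum}, \ref{thm:riesz sum implies modreln 2} and \ref{thm:modreln2 implies FE}. The chain has precisely your pattern, (FE)$\Leftrightarrow$(M1) together with (FE)$\Rightarrow$(R)$\Rightarrow$(M2)$\Rightarrow$(FE), and your caveat about keeping the quantifiers over $\rho$, $a$ and the curve $C_a$ consistent is a sensible point that the paper leaves implicit.
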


We now discuss in more detail  recent literature related to these results.  In \cite{bochner}, Bochner defined Dirichlet series of Hecke-type as follows. 
Let $\lambda_n$ and $\mu_n$ be two strictly increasing sequences of real numbers tending to $\infty$ and  let ${a(n)}, {b(n)}$ be two sequences of complex numbers not identically zero. Consider the functions $\phi(s)$ and $\psi(s)$ representable as Dirichlet series
\begin{equation}
	\phi(s)=\sum_{n=1}^{\infty} \frac{a(n)}{\lambda_n^{s}} \hspace{4mm}\text{and}\hspace{4mm} \psi(s)=\sum_{n=1}^{\infty} \frac{b(n)}{\mu_n^{s}},
	\label{def:dirser}
\end{equation}
with finite abscissa of absolute convergence $\sigma_a$ and $\sigma_b$ respectively. 

These Dirichlet series satisfy the functional equation 
\begin{align}
	(2\pi)^{-s}	\Gamma(s)\phi(s)=(2\pi)^{s-\delta}\Gamma(\delta-s)\psi(\delta-s),
	\label{fe:bochner}
\end{align}
 for some $\delta>0$. That is, there exists a domain $D$ which is the exterior of a bounded closed set $S$, such that  in $D$ there exists a holomorphic function $\chi(s)$ satisfying 
\begin{align}
	\lim_{|t|\to\infty}\chi(\sigma+it)=0
\end{align}
uniformly in every bounded strip $-\infty<\sigma_1\leq\sigma\leq\sigma_2<\infty$, with  $\chi(s)=(2\pi)^{-s}	\Gamma(s)\phi(s)$ for all $\sigma>\sigma_a$, and $\chi(s)=(2\pi)^{s-\delta}\Gamma(\delta-s)\psi(\delta-s)$ for all $\sigma< {\delta- \sigma_b}$.
 Bochner proved that the functional equation \eqref{fe:bochner} is equivalent to the modular relation 
\begin{align}
	\sum_{n=1}^{\infty}a(n)e^{-\lambda_nx}=P(x)+\left(\frac{2\pi}{x}\right)^{\delta}	\sum_{n=1}^{\infty}b(n)e^{-4\pi^2\mu_n/x},\label{modrelsing}
\end{align}
where 
\begin{align}
	P(x)=\frac{1}{2\pi i}\int_{C}\chi(s)\left(\frac{2\pi}{x}\right)^s\,ds,
\end{align}
and $C$ denotes a curve (or curves)  in $D$ containing the set $S$. Chandrasekharan and Narasimhan \cite{ChandNarHeck} considered the same functional equation with arbitrary real $\delta$ and a more relaxed growth condition  on $\chi(s)$. In particular, they considered 
\begin{align}
	e^{-\epsilon|t|}\chi(\sigma+it)=O(1)
\end{align}
with $0<\epsilon<\pi/2$ as $|t|\to\infty$,	uniformly in every bounded strip $-\infty<\sigma_1\leq\sigma\leq\sigma_2<\infty$. They  proved the equivalence of the functional equation \eqref{fe:bochner} and the modular relation \eqref{modrelsing}. In the same paper, they also considered the identity 
\begin{align}
	S_{\rho}(x):=	\frac{1}{\Gamma(\rho +1)}\sideset{}{'}\sum_{\lambda_n\leq x}a(n)(x-\lambda_n)^{\rho}=\left(\frac{1}{2\pi}\right)^{\rho}\sum_{n=1}^{\infty}\left(\frac{x}{\mu_n}\right)^{\frac{1}{2}(\delta+\rho)}b(n)J_{\delta+\rho}(4\pi\sqrt{\mu_nx})+Q_{\rho}(x),
	\label{riiden}
\end{align}
where $J_{\d+\r}$ denotes the Bessel function of the first kind of order $\d+\r$, 
 $Q_{\rho}$ is a residual function defined by 
\begin{align}\label{def:Q rho}
	Q_{\rho}(x)=\frac{1}{2\pi i}\int_{C}\frac{\chi(s)(2\pi)^sx^{s+\rho}}{\Gamma(\rho+1+s)}\,ds,
\end{align}
and $C$ denotes a curve (or curves) in $D$ containing the set $S$. They proved that the  identity  \eqref{riiden} for $x>0$ and $\rho\geq 2\sigma_b-\delta-\frac{1}{2}$   is equivalent to the functional equation \eqref{fe:bochner}. In \cite{ChandrasekharanI}, the arithmetic identity \eqref{riiden} served as the  main tool for Chandrasekharan and Narasimhan to obtain the omega-bound 
\begin{align}
	\Re[S_{\rho}(x)-Q_{\rho}(x)]=\Omega_{\pm}\left(x^{\frac{\delta}{2}+\frac{\rho}{2}-\frac{1}{4}}\right),
\end{align}
if  $\Re b(n)\neq 0$  for at least one value of $n$ and 
\begin{align}
	\Im[S_{\rho}(x)-Q_{\rho}(x)]=\Omega_{\pm}\left(x^{\frac{\delta}{2}+\frac{\rho}{2}-\frac{1}{4}}\right),
\end{align}
if $\Im b(n) \neq 0$ for at least one value of $n$.

Bochner (p. 341, \cite{bochner}) made the following observation about  the modular relation \eqref{modrelsing}. If the function $\chi(s)$ has simple poles at $s=0$ and $s=\delta$ then the residual function $P(x)$ can be written in the form $-a(0)+b(0)x^{-\delta}$ for some $a(0), b(0) \in \mathbb{C}$. In this case the modular relation \eqref{modrelsing} can be re-written as 
\begin{align} 	\sum_{n=0}^{\infty}a(n)e^{-\lambda'_nx}={x}^{-\delta}	\left( \sum_{n=0}^{\infty}b(n)e^{-\mu'_n/x} \right),
\end{align} 
where  $\lambda'_n={2\pi} {\lambda_n}$ and $\mu'_n={2\pi}{\mu_n}$ and $\lambda'_0 =0 =\mu_0'$. 
 This modular relation implies the summation formula 
\begin{align}
	\sum_{n=0}^{\infty}a(n)f(\lambda'_nx)={x}^{-\delta}	\sum_{n=0}^{\infty}b(n)g(\mu'_n/x),\label{modrelsym}
\end{align}
for some pair of functions $f,g$ which are connected by the Hankel transformations 
\begin{align}
	g(t)=t^{-\frac{1}{2}(\delta-1)}\int_0^{\infty}J_{\delta-1}(2\sqrt{tx})x^{\frac{1}{2}(\delta-1)}f(x)\,dx, 
	\\	f(x)=x^{-\frac{1}{2}(\delta-1)}\int_0^{\infty}J_{\delta-1}(2\sqrt{tx})t^{\frac{1}{2}(\delta-1)}f(t)\,dt.
\end{align}
If one chooses $x=1$ and the support of $f$ is compact,  then \eqref{modrelsym} yields  analogues of the Vorono\"{\i} summation formula \eqref{eq:vsr}. Moreover, if $\rho\geq 0$ (for instance if   $2\sigma_b\geq \delta+ \frac{1}{2}$) in the identity \eqref{riiden},  then analogues of the Vorono\"{\i} summation formula can be obtained for $\sum a(n)f(n)$ by an  argument similar  to that discussed for  \eqref{partsum}.

In \cite{BerndtI}, Berndt considered the  Dirichlet series \eqref{def:dirser} and  allowed for  higher powers of the Gamma function in the functional equation. More precisely,  he defined the holomorphic function $\chi(s)$ by $\chi(s)=	\Gamma^m(s)\phi(s)$ for all $\sigma>\sigma_a$ and $\chi(s)=\Gamma^m(\delta-s)\psi(\delta-s)$ for all $\sigma<\delta-\sigma_b$. Let $p$ be the smallest positive integer so that  $\gamma= \sigma_b+p-\frac{1}{4m}>\max(0,\sigma_a,\sigma_b)$. Berndt showed that if 
\begin{align}
	\chi(s)=O\left(e^{\exp\left(\frac{ \theta \pi |s|}{2\gamma-\d}\right)}\right),
\end{align}
for some $\theta<1$, uniformly in the half-strip $\delta-\gamma<\sigma<\gamma$, $|t|\geq \eta$ contained in $D$, then we have an equivalence between  the functional equation $\chi(s)=\chi(\d-s)$ and the identities 
\begin{align}
	\sum_{n=1}^{\infty}a(n)E_m(\lambda_nx)=P(x)+x^{-\delta}	\sum_{n=1}^{\infty}b(n)E_m(\mu_n/x), 
	\quad (x>0) \label{modrelberndt}
\end{align}
and 
\begin{align}
	\frac{1}{\Gamma(\rho +1)}\sideset{}{'}\sum_{\lambda_n\leq x}a(n)(x-\lambda_n)^{\rho}=2^{\rho(1-m)}\sum_{n=1}^{\infty}\left(\frac{x}{\mu_n}\right)^{\frac{1}{2}(\delta+\rho)}b(n)K_{\delta+\rho}(2^m\sqrt{\mu_nx};  \d-1; m)+Q_{\rho}(x),
\end{align}
for $\delta>-1/2$ and $\rho>2m\sigma_b-m\delta-1/2$.  Here 
\begin{align}
	E_m(x)=\frac{1}{2\pi i}\int_{(c)}\Gamma^m(s)x^{-s}\,ds, \quad c>0, x>0,
\end{align}
\begin{align}
	K_{\nu}(x;\mu;m)=\frac{1}{2\pi i } \int_{(c)}\frac{2^{2ms-(m-1)\mu-\nu-m+1}\Gamma(s)x^{\nu-2s}}{\Gamma^{m-1}(\mu+1-s)\Gamma(\nu+1-s)}\,ds,\quad  0<c< \frac{1}{2}\min (\mu,\nu)+\frac{3}{4},
\end{align}
\begin{align}
	P(x)=\frac{1}{2\pi i}\int_{C}\chi(s)x^{-s}\,ds 
\text{ and  }
	Q_{\rho}(x)=\frac{1}{2\pi i}\int_{C}\frac{\Gamma(s)\phi(s)x^{s+\rho}}{\Gamma(\rho+1+s)}\,ds,
\end{align}
where $C$ denotes a curve (or curves) in $D$ containing the set $S$. 

In a series of papers, Berndt \cite{BerndtGen}, \cite{BerndtIII}, \cite{BerndtIV}, \cite{BerndtV}, \cite{BerndtVII}  presented a detailed study of Riesz-type summation formulae for various classes of Dirichlet series. In \cite{BerndtKimZaharescuIV2012}, Berndt, Kim and Zaharescu studied various Riesz-type sums and then in \cite{KimSun}, Kim  gave Riesz-type identities for weighted sums of divisor functions. In recent work by Berndt, Dixit, Gupta and Zaharescu \cite{BDGZI}, \cite{BDGZII}, they obtained new identities of the type \eqref{riiden} involving the modified Bessel function $K_{\nu}(z)$ of order $\nu$.

In \cite[Theorem 6.1]{BDRZ}, the first author along with Berndt, Dixit and Zaharescu proved a Vorono\"{\i} summation formula analogous to \eqref{eq:vsd} for the divisor function $\sigma_s(n)$. Subsequently,  Dixit, Maji and Vatwani \cite[Theorem 2.2]{DixitMajiVatwani} obtained a  version of the Vorono\"{\i} summation formula for a more general divisor function $\sigma_s^{(k)}(n)$, which gives \cite[Theorem 6.1]{BDRZ} as a corollary but for a larger domain. Again in \cite{BanerjeeMaji2023}, Banerjee and Maji obtained a summation formula for $\sigma_s^{(k)}(n)$ involving the modified Bessel function of the second kind.

In the literature, there are many studies relevant to obtaining the modular-type relation \eqref{modrelsing} from the functional equation in various settings. For instance, Arai, Chakraborty, Kanemitsu \cite{AraiChakrabortyKanemitsu} and Chakraborty, Kanemitsu, Maji \cite{ChakrabortyKanemitsuMaji} obtained modular-type relations associated to Dedekind zeta functions and the Rankin-Selberg $L$-function respectively. A breakthrough result of Tsukada \cite{tsukada} gives a more generalized modular-type identity for a class of Dirichlet series. This general result allows one to recover many existing modular-type identities as special cases, for instance \eqref{modrelsing} and \eqref{modrelberndt}. Further in \cite{kanemitsu}, \cite{KanemitsuTanigawaTsukada}, Kanemitsu, Tanigawa and Tsukada studied these kinds of generalized modular relations along with many number theoretic applications. In \cite{KanemitsuTanigawaTsukada}, they established the equivalence between the functional equation and modular relation in a very general setting. For a more detailed description of these generalized modular relations, we refer the reader to the book \cite{Kanemitsu2015book} by Kanemitsu and Tsukada.


	\section{Prerequisites}
We define a  residual function as done by Bochner in  \cite{bochner}.
	\begin{definition}  \cite[Definition 2]{bochner} \label{def:res fn}
		A function $P(x)$ is said to be a residual function if
		\begin{enumerate}[(i)]
			\item $P(x)$ is defined and differentiable in $(0, \infty)$. Moreover, $P(x)=O(x^{-c})$ as $x \rightarrow 0$ and $P(x)=O(x^{c})$ as $x \rightarrow \infty$, for some constant $c>0$, so that the functions 
			\begin{equation}
				I_1(s)= \int_{0}^{1} P(x)x^{s-1}dx, \hspace{2cm} 	I_2(s)=- \int_{1}^{\infty} P(x)x^{s-1}dx,
			\end{equation}
		can be introduced in some right half-plane and  left half-plane respectively.
		\item $I_1(s)$  and $I_2(s)$ can be continued into each other in a domain $D$ as introduced in Definition \ref{def:fn eqn of F}.
		\item $\lim_{|t|\rightarrow \infty}I(\sigma+it)=0$, uniformly in $-\infty<\sigma_1 \leq\sigma\leq\sigma_2< +\infty$, where $I$ denotes the function obtained by analytic continuation in (ii).  	
	\end{enumerate}
	\end{definition}

The following example of a residual function was given by Bochner \cite{bochner}.  
	\begin{lemma}\cite[Lemma 1] {bochner} \label{lem:residuefn}
	Suppose $\chi(s)$ is analytic in a domain $D$ in $\mathbb C$. Then the integral
	\begin{equation}
		P(x)= \frac{1}{2 \pi i}\int_{C} \chi(s)x^{-s}ds,
	\end{equation}
over a bounded curve or curves $C$ in $D$, with $x^{-s}=\exp(-s\log x)$,
	  is a residual function.	
	\end{lemma}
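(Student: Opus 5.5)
We must prove Lemma (Bochner): P(x) = (1/2πi)∫_C χ(s)x^{-s}ds over a bounded curve C in D is a residual function. We verify the three conditions of Definition \ref{def:res fn}.

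First, differentiability and growth. Since $C$ is compact and $\chi$ is continuous on it, $|\chi|\le M$ on $C$, and $\Re(s)$ lies in a bounded interval $[-R,R]$ for $s\in C$. Then $|x^{-s}|=x^{-\Re(s)}\le \max(x^{R},x^{-R})$. Hence $P(x)=O(x^{-R})$ as $x\to0$ and $O(x^{R})$ as $x\to\infty$, so we may take $c=R$ (or $R+1$ to ensure $c>0$). Differentiation under the integral sign is justified because $\partial_x x^{-s}=-s x^{-s-1}$ is continuous in $(s,x)$ on $C\times[x_1,x_2]$. So condition (i) holds, with $I_1$ defined for $\Re(s)>R$ and $I_2$ for $\Re(s)<-R$.

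Next we compute the Mellin pieces by Fubini, which applies since $C$ has finite length and the integrands are bounded. For $\Re(w)>R$,
\[
I_1(w)=\frac{1}{2\pi i}\int_C\chi(s)\int_0^1x^{w-s-1}\,dx\,ds=\frac{1}{2\pi i}\int_C\frac{\chi(s)}{w-s}\,ds ,
\]
and for $\Re(w)<-R$,
\[
I_2(w)=-\frac{1}{2\pi i}\int_C\chi(s)\int_1^\infty x^{w-s-1}\,dx\,ds=-\frac{1}{2\pi i}\int_C\frac{\chi(s)}{s-w}\,ds ,
\]
which is the same expression. Thus both equal the Cauchy-type integral $I(w)=\frac{1}{2\pi i}\int_C \frac{\chi(s)}{w-s}\,ds$. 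This function is holomorphic for $w\notin C$, in particular in the unbounded component of the complement of $C$, and that component connects the two half-planes. This gives the continuation required in (ii). One should note the domain issue: the continuation lives off $C$, and after deforming $C$ within $D$ if necessary, it lives in the region exterior to $C$, which is exactly what the definition intends. I expect stating this domain precisely to be the only delicate point.

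Finally, condition (iii). For $w=\sigma+it$ with $\sigma$ in a bounded interval and $|t|$ large, we have $|w-s|\ge |t|-R'$ for $s\in C$, where $R'$ bounds $|s|$ on $C$. Hence $|I(w)|\le \frac{ML(C)}{2\pi(|t|-R')}\to0$ uniformly in the strip, where $L(C)$ is the length of $C$.
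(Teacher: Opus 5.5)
The paper gives no proof of this lemma; it quotes it from Bochner. Your argument is correct and is the standard one. Interchanging the $s$- and $x$-integrals turns both Mellin pieces $I_1$ and $I_2$ into the same Cauchy-type integral $\frac{1}{2\pi i}\int_C \frac{\chi(s)}{w-s}\,ds$. This integral is holomorphic off $C$ and is $O(1/|t|)$ uniformly in vertical strips, which gives conditions (ii) and (iii).

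Two points of precision.

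First, the Fubini step rests on absolute integrability, not boundedness. For $R<\Re w<R+1$ the factor $x^{w-s-1}$ is unbounded near $0$, but
$\int_C\int_0^1|\chi(s)|\,x^{\Re w-\Re s-1}\,dx\,|ds|\le ML(C)/(\Re w-R)$, and the same works on $(1,\infty)$ for $\Re w<-R$.

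Second, no deformation of $C$ is needed for condition (ii) of Definition \ref{def:res fn}. The unbounded component of $\mathbb{C}\setminus C$ is itself the exterior of a compact set, and that is all the definition requires. If you do want the continuation on all of $D$ (when $C$ surrounds $\mathbb{C}\setminus D$), be careful at points $w\in D$ lying inside $C$. There the continuation is given by the integral over a contour $C'$ closer to the singular set, not by the integral over $C$; the two differ by $\chi(w)$, up to sign depending on orientation.
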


We will need the following information about the integral $Q_\rho(x)$. 
\begin{lemma}\label{lem:Q_rho is C infinite}
	The residual function $Q_\r(x)$, defined in \eqref{resfnQ_rho} is a $C^{\infty}$ function.
\end{lemma}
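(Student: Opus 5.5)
The plan is to show that $Q_\rho$ is smooth by differentiating under the integral sign in \eqref{resfnQ_rho}; the essential point is that the contour $C$ is compact and is independent of $x$. We set
\[
h(s)=\frac{\phi(s)\Gamma(s)}{\Gamma(s+\rho+1)},
\]
so that $Q_\rho(x)=\frac{1}{2\pi i}\int_C h(s)x^{s+\rho}\,ds$ for $x>0$.

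First I check that $h$ is continuous, hence bounded, on $C$. By the choice of $C$, all singularities of $\phi(s)\Gamma(s)x^{s+\rho}$ (and of $F$) in the strip lie strictly inside $C$. Hence $\phi(s)\Gamma(s)$ is analytic in a neighbourhood of the compact curve $C$. The factor $1/\Gamma(s+\rho+1)$ is entire. So $h$ is analytic in an open neighbourhood $U$ of $C$, and $M:=\sup_{s\in C}|h(s)|<\infty$.

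Next I handle the $x$-dependence. We have $x^{s+\rho}=\exp((s+\rho)\log x)$. For $x$ in a compact interval $[x_1,x_2]\subset(0,\infty)$ and $s\in C$, this is jointly smooth in $(x,s)$, and its $k$-th $x$-derivative equals $(s+\rho)(s+\rho-1)\cdots(s+\rho-k+1)\,x^{s+\rho-k}$. Since $C$ is compact, $|s|\le R$ on $C$, and $x\in[x_1,x_2]$, this derivative is bounded by a constant $M_k$ depending only on $k$, $R$, $x_1$, $x_2$. Each $x$-derivative of the integrand is therefore continuous and uniformly bounded by $M\cdot M_k$ on $[x_1,x_2]\times C$, and $C$ has finite length.

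By the standard theorem on differentiating parameter-dependent integrals over a compact contour (or dominated convergence), we may differentiate under the integral sign any number of times. This gives
\[
Q_\rho^{(k)}(x)=\frac{1}{2\pi i}\int_C h(s)(s+\rho)\cdots(s+\rho-k+1)\,x^{s+\rho-k}\,ds,
\]
and each such integral is continuous in $x$. As $[x_1,x_2]$ is arbitrary, $Q_\rho\in C^\infty(0,\infty)$. In fact the same argument shows that $Q_\rho$ extends holomorphically to the slit plane, in the spirit of Lemma \ref{lem:residuefn}.

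The only step that needs care is the first one: confirming that no singularity of $\phi(s)\Gamma(s)$ lies on $C$ itself. This is guaranteed by the hypothesis on $C_a$ in Theorem \ref{thm:FE implies Rieszsum}. If one prefers, one can perturb $C$ slightly within $U$ without changing the integral, by Cauchy's theorem. Once that is settled, the remainder is routine.
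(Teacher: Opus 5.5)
Your proof is correct, but it takes a different route from the paper's.

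The paper uses the residue theorem. It asserts that $C$ encloses only finitely many poles of the integrand, so $Q_\rho(x)$ is a finite sum of residues of the shape $\sum c_\alpha x^{\alpha}$, which is trivially smooth. That explicit form is then reused in the proof of Theorem~\ref{thm:riesz sum implies modreln 2} to justify the absolute convergence of $K_1(y)=\int_0^\infty Q_\rho(x)A(x)\,dx$.

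You instead differentiate under the integral sign over the fixed compact contour. This needs only continuity of $\phi(s)\Gamma(s)/\Gamma(s+\rho+1)$ on $C$, which the choice of $C_a$ guarantees.

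Your route is the more robust one:
\begin{itemize}
\item Definition~\ref{def:fn eqn of F} only guarantees that $\chi$ is analytic off a compact set $S$, so the singularities enclosed by $C$ need not be finitely many poles.
\item Even when they are, a multiple pole contributes terms $x^{\alpha}(\log x)^j$ rather than pure powers. An example is the double pole of $\zeta(s)^2$ at $s=1$, the case $z=0$ of the paper's first example. This is harmless for smoothness, but it shows the paper's description of $Q_\rho$ is slightly inaccurate and tacitly assumes more than the hypotheses provide.
\end{itemize}

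What the paper's version buys is an explicit closed form for $Q_\rho$. Your argument still recovers what is used later. Let $\ell(C)$ be the length of $C$, $M=\sup_C|h|$, and $\sigma_\pm$ the extreme values of $\Re s$ on $C$. Then directly
\[
|Q_\rho(x)|\le \frac{\ell(C)M}{2\pi}\max\bigl(x^{\rho+\sigma_-},x^{\rho+\sigma_+}\bigr),
\]
and this power-type bound is all the argument for $K_1(y)$ actually uses. You also get real-analyticity of $Q_\rho$ on $(0,\infty)$ for free.
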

\begin{proof}
	As $C$ is a bounded curve or  a  finite union of bounded curves, it contains a finite number of poles  of the integrand. By the  Cauchy residue theorem, $Q_\r(x)$ is the sum of  residues at these singularities, and is thus a function of the form $\sum c_\alpha x^{\alpha}$ . Since this a finite sum, $Q_\r(x)$ is a  $C^{\infty}$  function, as needed. 
\end{proof}

The following lemma is a version of the  Phragm\'en-Lindel\"of principle due to J. E. Littlewood \cite{Littlewoodtheoryoffn}. We state the version given by Chandrasekharan and Narasimhan \cite{ChandNarHeck}.
	\begin{lemma} \cite[Lemma 2]{ChandNarHeck} \label{lem:phragmen-lind}
Suppose that  $f(s)$,  $s=\sigma+it$,  is regular in a half-strip $S$ defined by $a<\s<b$, $|t|>c$ and continuous on the boundary. Moreover if we have  
\begin{equation}
	f(s)=O\left(\exp\left[\exp\left(\frac{\theta \pi|s|}{b-a}\right)\right]\right),
\end{equation}
uniformly in $S$ for $\theta <1$, and $f=o(1)$ on $x=a$, and on $x=b$, then $f=o(1)$ uniformly in $S$. 
	\end{lemma}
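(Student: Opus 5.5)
The statement concerns a half-strip, and it suffices to treat the upper half $S^+=\{a<\sigma<b,\ t>c\}$; the lower half follows by applying the same argument to $\overline{f(\bar s)}$. Write $\ell=b-a$ and $m=(a+b)/2$. The plan is the classical two-step Phragm\'en--Lindel\"of argument. First I show that $f$ is bounded on $S^+$, using a double-exponential damping factor that beats the growth hypothesis. Then I upgrade boundedness to $o(1)$ by a two-constants (harmonic majorant) argument on half-strips $\{t>T\}$ with $T\to\infty$.

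\emph{Step 1 (damping factor).} Fix $\theta<\theta'<1$ and set $z=-i\pi(s-m)/\ell$, so $z=x+iy$ with $x=\pi t/\ell$ and $|y|<\pi/2$ on $S^+$. For $\varepsilon>0$ let
\begin{equation}
h_\varepsilon(s)=\exp\bigl(-\varepsilon\cosh(\theta' z)\bigr).
\end{equation}
Since $\Re\cosh(\theta' z)=\cosh(\theta' x)\cos(\theta' y)\ge \cos(\theta'\pi/2)\cosh(\theta' x)>0$, we get $|h_\varepsilon|\le 1$ and
\begin{equation}
|h_\varepsilon(s)|\le \exp\bigl(-\tfrac{\varepsilon}{2}\cos(\theta'\pi/2)\,e^{\theta'\pi t/\ell}\bigr).
\end{equation}
On $S^+$ we have $|s|\le t+O(1)$, so the hypothesis gives $|f(s)|\le A\exp(B e^{\theta\pi t/\ell})$. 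Because $\theta<\theta'$, it follows that $f h_\varepsilon\to0$ as $t\to\infty$, uniformly in $\sigma$.

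\emph{Step 2 (boundedness).} Let $M$ bound $|f|$ on $\partial S^+$. This is possible because $f=o(1)$ on the two vertical sides and $f$ is continuous on the compact bottom segment. Apply the maximum principle to $fh_\varepsilon$ on the truncated rectangles $\{a<\sigma<b,\ c<t<R\}$. On the top edge $|fh_\varepsilon|\to0$ as $R\to\infty$, so $|fh_\varepsilon|\le M$ on $S^+$. Letting $\varepsilon\to0$, with $h_\varepsilon\to1$ pointwise, gives $|f|\le M$ on $S^+$.

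\emph{Step 3 (from bounded to $o(1)$).} This is the main point, since the maximum principle alone only transfers the bound $M$. Given $\eta>0$, choose $T>c$ with $|f|\le\eta$ on both vertical sides for $t\ge T$. Pick $\ell'>\ell$ and set $k=\pi/\ell'$. Define
\begin{equation}
u(s)=\frac{e^{-k(t-T)}\cos(k(\sigma-m))}{\cos(k\ell/2)},
\end{equation}
which is the real part of a multiple of $e^{ik(s-m)}$ and hence harmonic. It is positive on the closed half-strip, $u\ge1$ on the bottom edge $t=T$, and $u\le C e^{-k(t-T)}$. The function $\log|f h_\varepsilon|$ is subharmonic. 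On the boundary of the half-strip $\{t>T\}$ it is at most
\begin{equation}
\log\eta+\log^+(M/\eta)\,u,
\end{equation}
and at the top it tends to $-\infty$. The maximum principle on truncated rectangles therefore gives
\begin{equation}
\log|fh_\varepsilon|\le \log\eta+\log^+(M/\eta)\,u \quad\text{on } \{t>T\}.
\end{equation}
Letting $\varepsilon\to0$ yields $|f(s)|\le \eta\,(M/\eta)^{Ce^{-k(t-T)}}$. Hence $\limsup_{t\to\infty}\sup_\sigma|f|\le\eta$. Since $\eta$ was arbitrary, $f=o(1)$ uniformly in $S$.

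The only delicate point is the choice $\theta<\theta'<1$ in Step 1. It requires both $\cos(\theta'\pi/2)>0$, which holds because $\theta'<1$, and a double exponential strong enough to dominate the hypothesis, which holds because $\theta'>\theta$. This is exactly where the condition $\theta<1$ is used.
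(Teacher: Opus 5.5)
The paper gives no proof of this lemma. It quotes it from Chandrasekharan--Narasimhan, who attribute it to Littlewood, so there is no internal argument to compare with. Your proposal is a correct, self-contained proof along classical lines.

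\begin{itemize}
\item Your multiplier $h_\varepsilon=\exp(-\varepsilon\cosh(\theta' z))$, written via the rotation $z=-i\pi(s-m)/\ell$, is the usual Phragm\'en--Lindel\"of damping factor $\exp(-\varepsilon\cos(\theta' w))$ for a strip of width $\pi$. It gives boundedness of $f$ by the boundary supremum.
\item Your Step 3 correctly supplies the upgrade from boundedness to $o(1)$, which the maximum principle alone cannot give. You compare the subharmonic function $\log|fh_\varepsilon|$ with $\log\eta+\log^+(M/\eta)\,u$. Here $u$ is positive and harmonic, $u\ge 1$ on $t=T$, and $u$ decays like $e^{-k(t-T)}$. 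The boundary comparisons on the vertical sides, the bottom edge, and the far top edge all check out.
\item The reduction to the upper half-strip via $\overline{f(\bar s)}$ is also fine.
\end{itemize}

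Two small fixes are needed.

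\begin{itemize}
\item Choose $\max(\theta,0)<\theta'<1$. If $\theta<0$, your recipe $\theta<\theta'<1$ allows $\theta'=0$, where $h_\varepsilon$ is constant and the top edge in Step 2 is not controlled. More generally, your displayed decay $e^{\theta'\pi t/\ell}$ only grows when $\theta'>0$. You may simply assume $0<\theta<1$, since enlarging $\theta$ weakens the hypothesis.
\item Your majorant involves only $\log^+(M/\eta)$, so the final bound should read $|f(s)|\le \eta\,\max(1,M/\eta)^{Ce^{-k(t-T)}}$. As written, $\eta(M/\eta)^{Ce^{-k(t-T)}}$ claims more than you proved when $M<\eta$.
\end{itemize}

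Compared with the paper's bare citation, your route makes the lemma self-contained. It also shows exactly where $\theta<1$ is used: you need $\cos(\theta'\pi/2)>0$ and $\theta'>\theta$ at the same time. The citation simply keeps the exposition short.
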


The Gamma function is the Mellin transform of $e^{-x}$, defined for any $s \in \mathbb{C} $ with  $\Re(s) > 0$ by the real integral 
	$$ \Gamma (s) = \int_{0}^{\infty} x^{s-1} e^{-x} dx. $$
 Similarly, for any $\alpha, \beta \in \mathbb C$ with $\Re(\a s+\b)>0$,  we can express $\G(\a s+\b)$ as  
	\begin{equation} \label{def:gamma fn}	
		\G(\a s+\b)=\int_{0}^{\infty} f_{\a, \b}(x) x^{s-1} dx, 
	\end{equation}
	where 
	\begin{align}
		\label{def:f alpha beta}
	f_{\a, \b}(x)= \frac{\exp(-x^{1/\a})}{\a x^{\b/\a}}.
	\end{align}

We can extend this to a product of Gamma functions, for instance, as in Berndt \cite[Lemma 5]{BerndtI}. 

\begin{lemma} \label{lem:multiple mellin int}
	Define for $x, a>0$, 
	$$
	Y_{\bs \alpha,\bs\beta}(x):=\frac{1}{2 \pi i}\int_{a-i\infty}^{a+i\infty} \G(s)\prod_{i=1}^{r} \Gamma(\alpha_i s+ \beta_i) x^{-s}ds,
	$$ where $\bs \a =(\a_1, \dots, \a_r),\, \bs \b=(\b_1, \dots, \b_r)$.
	Then we have, 
	\begin{equation}
		Y_{\bs \alpha,\bs\beta}(x)=\int_{0}^{\infty}f_{\a_r, \b_r}(u_r) \frac{du_r}{u_r} \cdots \int_{0}^{\infty}f_{\a_1, \b_1}(u_1) \exp\left(\frac{-x}{u_1\cdots u_r}\right) \frac{du_1}{u_1},
	\end{equation}
where $f_{\a,\b}(x)$ is as defined in \eqref{def:f alpha beta}. 
\end{lemma}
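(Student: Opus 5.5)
We argue by induction on $r$, using the Mellin convolution theorem. The base ingredients are the representation \eqref{def:gamma fn}, which gives $\G(\a_i s+\b_i)$ as the Mellin transform of $f_{\a_i,\b_i}$ for $\Re(\a_i s+\b_i)>0$, and $\G(s)=\int_0^\infty e^{-u}u^{s-1}\,du$. Note that since $a>0$, $\a_i>0$ and $\Re(\b_i)\ge 0$, we have $\Re(\a_i s+\b_i)>0$ on the line $\Re(s)=a$. So every factor in the integrand is an absolutely convergent Mellin transform there.

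For $k=0,1,\dots,r$, set
\begin{equation*}
g_k(x):=\int_{0}^{\infty}f_{\a_k,\b_k}(u_k)\frac{du_k}{u_k}\cdots\int_0^\infty f_{\a_1,\b_1}(u_1)\exp\left(\frac{-x}{u_1\cdots u_k}\right)\frac{du_1}{u_1},
\end{equation*}
with $g_0(x)=e^{-x}$. Then $g_k(x)=\int_0^\infty f_{\a_k,\b_k}(u)\,g_{k-1}(x/u)\,\frac{du}{u}$, a multiplicative convolution. We first compute the Mellin transform of $g_k$ directly. Insert the definition, interchange the $x$- and $u$-integrals, and substitute $x=uy$:
\begin{equation*}
\int_0^\infty g_k(x)x^{s-1}\,dx=\int_0^\infty f_{\a_k,\b_k}(u)u^{s-1}\,du\int_0^\infty g_{k-1}(y)y^{s-1}\,dy=\G(\a_k s+\b_k)\int_0^\infty g_{k-1}(y)y^{s-1}\,dy .
\end{equation*}
The interchange is justified by Tonelli, since all integrands have absolute values which are nonnegative and integrable for $\Re(s)=a>0$. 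Indeed, $|f_{\a,\b}(u)|=\a^{-1}u^{-\Re\b/\a}e^{-u^{1/\a}}$, and $|g_{k-1}|$ is bounded by the same nested integral built from these moduli, whose Mellin transform is a finite product of $\G(\a_i\sigma+\Re\b_i)$ and $\G(\sigma)$. By induction, the Mellin transform of $g_r$ is therefore $\G(s)\prod_{i=1}^r\G(\a_i s+\b_i)$ for $\Re(s)=a$, and in fact on the whole half-plane $\Re(s)>0$.

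Next we invert. The integrand $\G(s)\prod_i\G(\a_i s+\b_i)x^{-s}$ decays exponentially in $|t|$ on $\Re(s)=a$, by Stirling's formula, since each Gamma factor decays like $e^{-\pi\a_i|t|/2}$ times a power of $|t|$. It is therefore absolutely integrable, and $Y_{\bs\a,\bs\b}(x)$ is continuous in $x$. The function $g_r$ is also continuous on $(0,\infty)$, by dominated convergence with the same majorants, and $g_r(x)x^{a-1}\in L^1(0,\infty)$. The Mellin inversion theorem, applied to a continuous function whose Mellin transform is absolutely integrable on the vertical line, then gives $g_r(x)=Y_{\bs\a,\bs\b}(x)$ for all $x>0$, which is the claim. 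As an alternative to citing inversion, we can substitute $x=e^{v}$ and $s=a+it$ to reduce the statement to Fourier inversion for the $L^1$ function $g_r(e^{v})e^{av}$, whose Fourier transform is also $L^1$.

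The main obstacle is the bookkeeping of integrability: we must check that the majorant $|g_{k-1}|$ really lies in the needed weighted $L^1$ space, and that $g_r$ is continuous so that inversion holds pointwise rather than almost everywhere. Both follow from the positivity argument above, which replaces $\b_i$ by $\Re\b_i\ge0$ and $s$ by $\sigma=a$, since all the resulting Gamma values are finite.
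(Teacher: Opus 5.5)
Your proposal is correct and follows the same route as the paper. The paper combines the Mellin representation \eqref{def:gamma fn} of each factor $\G(\a_i s+\b_i)$, together with $\G(s)$ as the transform of $e^{-x}$, with the multiple Mellin convolution formula it cites from Titchmarsh; the only difference is that you prove that convolution-and-inversion step yourself (induction, Tonelli on the moduli $f_{\a_i,\Re\b_i}$, and Fourier inversion on $\Re(s)=a$), which makes explicit the integrability and continuity hypotheses the paper leaves implicit in the citation.
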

\begin{proof}
	We recall the following  result on multiple Mellin integrals \cite[p.53]{TitchmarshFourier}.
If $F(s)$, $F_1(s)$, \dots, and $F_n(s)$ are Mellin transforms of $f(x)$, $f_1(x)$, \dots, and $f_n(x)$ respectively then 
	\begin{equation}
		\frac{1}{2 \pi i}\int_{a-i\infty}^{a+i\infty}F(s)F_1(s)\cdots F_n(s)   x^{-s}ds=\int_{0}^{\infty}f_n(u_n) \frac{du_n}{u_n} \cdots \int_{0}^{\infty}f_1(u_1) f\left(\frac{x}{u_1\cdots u_n}\right) \frac{du_1}{u_1}.
	\end{equation}
	Combining this with \eqref{def:gamma fn} completes the proof. 
\end{proof}


	We recall an integral transformation formula given by Chandrasekharan and Narasimhan on p. 35 of \cite{ChandNarApprox}.
	\begin{lemma}\label{lem:inv mel of Gammacos}
	 For any $c<-\frac{1}{2}, c\notin \mathbb{Z}$ and $\a \in \mathbb{R}$, we have
		\begin{equation}
			\frac{1}{2\pi i}\int_{c-i\infty}^{c+i\infty}\G(s)\cos\left(\frac{\pi}{2}s+\a \right)x^{-s}ds=\cos(x+\a)-\sum_{0\leq n< |c|} (-1)^n \frac{x^n}{n!}\cos\left(\a-\frac{\pi}{2}n \right).
		\end{equation}
	\end{lemma}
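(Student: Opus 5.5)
The plan is to identify the right-hand side as a function whose Mellin transform, on the vertical strip containing the line $\Re(s)=c$, is exactly $\Gamma(s)\cos\left(\frac{\pi}{2}s+\a\right)$. I will then invoke Mellin inversion, which is unproblematic on that line because the integrand is absolutely integrable there.

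Let $N=\lfloor |c|\rfloor$, so that $-N-1<c<-N$ and $N\ge 0$. Define
$$g_N(x)=\cos(x+\a)-\sum_{0\le n\le N}(-1)^n\frac{x^n}{n!}\cos\left(\a-\frac{\pi}{2}n\right).$$
The subtracted sum is precisely the degree-$N$ Taylor polynomial of $\cos(x+\a)$ at $0$, because $\frac{d^n}{dx^n}\cos(x+\a)\big|_{x=0}=\cos\left(\a+\frac{\pi}{2}n\right)$. The stated formula writes the coefficient as $(-1)^n\cos\left(\a-\frac{\pi}{2}n\right)$, and I will check that the two expressions agree, as they should since these are the residues of $\Gamma(s)\cos\left(\frac{\pi}{2}s+\a\right)x^{-s}$ at $s=-n$.

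\textbf{Step 1: growth of $g_N$.} We have $g_N(x)=O(x^{N+1})$ as $x\to0$ and $g_N(x)=O(x^N)$ as $x\to\infty$ (for $N=0$ the bound at infinity is $O(1)$). Hence $\int_0^\infty g_N(x)x^{s-1}\,dx$ converges absolutely for $-N-1<\Re(s)<-N$.

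\textbf{Step 2: the Mellin transform of $g_N$.} I will show that for $-N-1<\Re(s)<-N$,
$$\int_0^\infty g_N(x)x^{s-1}\,dx=\Gamma(s)\cos\left(\frac{\pi}{2}s+\a\right).$$
The proof is by induction on $N$, in the spirit of the classical Cauchy–Saalschütz representation of $\Gamma(s)$ in left strips.
\begin{itemize}
\item For $N=-1$, i.e.\ the strip $0<\Re(s)<1$, this is the standard formula: expand $\cos(x+\a)=\cos\a\cos x-\sin\a\sin x$ and use $\int_0^\infty x^{s-1}\cos x\,dx=\Gamma(s)\cos\frac{\pi s}{2}$ and $\int_0^\infty x^{s-1}\sin x\,dx=\Gamma(s)\sin\frac{\pi s}{2}$, valid as improper integrals.
\item For the inductive step, integrate by parts once. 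Since $g_N'(x)=-\sin(x+\a)-(\text{Taylor polynomial of degree }N-1)$, this is again a function of the same type with $\a$ replaced by $\a+\frac{\pi}{2}$ and $N$ by $N-1$. The relation $\int g_N x^{s-1}\,dx=-\frac1s\int g_N' x^{s}\,dx$ moves the strip one unit to the left. The boundary terms vanish because of the growth bounds in Step 1, and $\frac{1}{s}\Gamma(s+1)=\Gamma(s)$ together with $\cos\left(\frac{\pi}{2}(s+1)+\a\right)=\cos\left(\frac{\pi}{2}s+\a+\frac{\pi}{2}\right)$ closes the induction.
\end{itemize}
Alternatively, one can argue by analytic continuation: both sides are analytic in $s$, and the difference between consecutive strips is accounted for by the residue at $s=-N$.

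\textbf{Step 3: inversion.} By Stirling's formula, $|\Gamma(c+it)|\asymp|t|^{c-1/2}e^{-\pi|t|/2}$, while $\left|\cos\left(\frac{\pi}{2}(c+it)+\a\right)\right|\asymp e^{\pi|t|/2}$. Hence the integrand on $\Re(s)=c$ is $O(|t|^{c-1/2})$, which is integrable exactly when $c<-\frac12$; this explains that hypothesis. Since $g_N$ is continuous and $g_N(x)x^{c-1}\in L^1(0,\infty)$ by Step 1, and since the transform is in $L^1$ on the line, Mellin inversion holds at every $x>0$. This gives the claimed identity with the sum over $0\le n\le N$, which is the same as $0\le n<|c|$ because $c\notin\mathbb{Z}$.

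The main obstacle is Step 2: justifying the integrations by parts and the vanishing boundary terms at both $0$ and $\infty$ uniformly across the strips, with the base case resting on conditionally convergent integrals. One could instead start from the line $0<c'<\frac12$ and shift the contour to $\Re(s)=c$, collecting residues at $s=0,-1,\dots,-N$; the horizontal segments vanish for $\sigma<\frac12$. But that route requires interpreting the initial inversion as a limit of symmetric truncations, which is why I prefer the Mellin-transform approach above.
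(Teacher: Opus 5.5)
Your proof is correct. The paper gives no proof of this lemma; it quotes the formula from Chandrasekharan and Narasimhan, so your argument is a self-contained replacement for a citation rather than a variant of something in the text.

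Your route has three steps. First, identify the right-hand side as $g_N(x)$, i.e.\ $\cos(x+\alpha)$ minus its Taylor polynomial of degree $N=\lfloor |c|\rfloor$; your check $(-1)^n\cos(\alpha-\frac{\pi}{2}n)=\cos(\alpha+\frac{\pi}{2}n)$ confirms this. Second, compute $\int_0^\infty g_N(x)x^{s-1}\,dx=\Gamma(s)\cos(\frac{\pi}{2}s+\alpha)$ on $-N-1<\Re(s)<-N$ by Cauchy--Saalsch\"utz integration by parts. Third, invert on $\Re(s)=c$ by $L^1$ Fourier inversion.

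Compared with the contour-shift derivation you sketch at the end, this has two advantages. It shows exactly where $c<-\frac12$ enters, namely absolute integrability of the $O(|t|^{c-1/2})$ integrand. It also never needs an inversion formula on a line where the integrand is only conditionally integrable. The single conditionally convergent integral, in the base case, causes no trouble. Integrate by parts on $[\epsilon,R]$ and let $\epsilon\to0$, $R\to\infty$: $g_0(x)x^{s-1}$ is absolutely integrable for $-1<\Re(s)<0$, and the boundary terms vanish by your Step 1 bounds.

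Two small remarks.

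First, the identity that actually closes the induction is $\cos\left(\frac{\pi}{2}(s+1)+\alpha+\frac{\pi}{2}\right)=-\cos\left(\frac{\pi}{2}s+\alpha\right)$, whose sign cancels the one in $-\frac1s$. The identity you quote is true but ignores the shift $\alpha\mapsto\alpha+\frac{\pi}{2}$.

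Second, nothing in your argument uses $\alpha\in\mathbb{R}$. For complex $\alpha$, $\cos(x+\alpha)$ is still bounded on $(0,\infty)$, the addition formula still gives the base case, and $|\cos(\frac{\pi}{2}(c+it)+\alpha)|\asymp e^{\pi|t|/2}$ still holds. This matters because the proof of Lemma~\ref{lem:asym for I_rho} applies the lemma with phase $\pi(d_F^{'}\gamma+\frac{n}{2}+i\nu-\mu)$, which is in general not real. Your proof therefore covers the case the paper actually uses, not just the one stated.
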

We will record here a version of Stirling's asymptotic formula for $\log{\G(s)}$ (refer to the equation following (8) in \cite{ChandNarApprox}). 
\begin{lemma}\label{lem:gamma asymp}
	For any $c\in \mathbb{C}, |\arg s|<\pi$, we have as $|s|\to \infty$,
	\begin{equation}\label{eq:stirling for log gamma}
		\log \G(s+c)= 
		\left(s+c- \frac{1}{2}\right)\log s-s+ \frac{\log{2\pi}}{2}+ \sum_{n=1}^{m}C_n s^{-n}+O\left(\frac{1}{|s|^{m+1}}\right),
	\end{equation} where $m$ is any positive integer and $C_n$ are some constants depending on $c$.
\end{lemma}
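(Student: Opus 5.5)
The plan is to reduce the claim to the classical Stirling series for $\log\G(w)$ and then re-expand in powers of $s^{-1}$ after substituting $w=s+c$.

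\textbf{Step 1: the classical series.} Fix $\e>0$. I would first recall that, uniformly in the sector $|\arg w|\le \pi-\e$, as $|w|\to\infty$,
\begin{equation*}
\log\G(w)=\Big(w-\tfrac12\Big)\log w-w+\tfrac12\log 2\pi+\sum_{k=1}^{K}\frac{B_{2k}}{2k(2k-1)w^{2k-1}}+O\big(|w|^{-2K-1}\big).
\end{equation*}
Here $B_{2k}$ are the Bernoulli numbers and $\log$ is the principal branch. This follows from Binet's formula or from Euler--Maclaurin summation applied to $\log\G(w+N)-\log\G(w)=\sum_{j<N}\log(w+j)$; the remainder is an integral of a periodic Bernoulli function against $(w+t)^{-2K-1}$. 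That integral is $O(|w|^{-2K-1})$ because $|w+t|\gg_\e |w|+t$ in the sector.

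\textbf{Step 2: substitute $w=s+c$.} Since $c$ is fixed and $|s|\to\infty$, we have $|w|\asymp|s|$. Also $\arg w$ stays within $\e/2$ of $\arg s$, so if $|\arg s|\le\pi-\e$ then $|\arg w|\le\pi-\e/2$ for $|s|$ large. Moreover $\log w=\log s+\log(1+c/s)$ holds with the principal branch, and for $|s|>2|c|$
\begin{equation*}
\log(1+c/s)=\sum_{k\ge1}\frac{(-1)^{k+1}c^k}{k\,s^k}.
\end{equation*}

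\textbf{Step 3: expand and collect.} The main term becomes
\begin{equation*}
\Big(s+c-\tfrac12\Big)\log s+\Big(s+c-\tfrac12\Big)\sum_{k\ge1}\frac{(-1)^{k+1}c^k}{k s^k}-s-c .
\end{equation*}
The $k=1$ term multiplied by $s$ contributes exactly $+c$, which cancels the $-c$. Every other piece is a convergent power series in $s^{-1}$ with no constant term.

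Each correction term $B_{2k}/(2k(2k-1))(s+c)^{-(2k-1)}$ expands binomially as a power series in $s^{-1}$. I would choose $2K+1>m+1$, truncate all series at order $s^{-m}$, and bound the tails by $O(|s|^{-m-1})$; this is legitimate since the series converge geometrically for $|s|>2|c|$. Collecting coefficients gives constants $C_n=C_n(c)$ and hence \eqref{eq:stirling for log gamma}.

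\textbf{Main obstacle: uniformity near the negative real axis.} The only delicate point is what ``$|\arg s|<\pi$'' means for uniformity. The estimate holds uniformly in every closed sector $|\arg s|\le\pi-\e$, which is how the lemma is used later, but not uniformly up to the negative real axis, where $\G$ has poles. I would therefore state and prove it in that sector form, fixing $\e$ first and then choosing $|s|$ large, depending on $\e$ and $c$, so that the argument and branch comparisons in Step 2 hold.
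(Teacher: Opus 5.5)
Your argument is correct. The paper gives no proof of this lemma: it only records the expansion and refers to the formula following (8) in Chandrasekharan--Narasimhan \cite{ChandNarApprox}. Your route is therefore a self-contained replacement for that citation: the classical Stirling series for $\log\G(w)$ in a sector, then $w=s+c$ and $\log w=\log s+\log(1+c/s)$, then re-expansion in $s^{-1}$, where the $-c$ cancels against the $+c$ coming from $s\log(1+c/s)$.

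Beyond the citation, your approach gives two things:
\begin{enumerate}
\item An explicit form for the constants, namely $C_n=\frac{(-1)^{n+1}B_{n+1}(c)}{n(n+1)}$ with $B_k(c)$ the Bernoulli polynomials. For example $C_1=\tfrac12(c^2-c+\tfrac16)$, which you can check against your own collection of terms.
\item A precise statement of uniformity. Your caveat here is well taken. The estimate holds uniformly in closed sectors $|\arg s|\le\pi-\e$, but the literal reading ``uniformly for $|\arg s|<\pi$'' is false. For instance, when $\Im c\neq0$ the poles $s=-c-n$ of $\G(s+c)$ lie inside $|\arg s|<\pi$ with $|s|\to\infty$.
\end{enumerate}

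There is one small bookkeeping slip in Step 1. Bounding $\int_0^\infty|w+t|^{-2K-1}\,dt$ via $|w+t|\gg_\e|w|+t$ gives $O(|w|^{-2K})$, not $O(|w|^{-2K-1})$; one more integration by parts gives the sharper exponent. This is harmless, because your choice $2K+1>m+1$ already means $2K\ge m+1$.
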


Note that for $s=\sigma+it$, if  $\sigma$ is fixed and 	 $|t|\rightarrow \infty$, this yields 
\begin{equation} \label{eq:stirling}
	|\Gamma(s)|\sim e^{- \frac{\pi}{2}|t|}|t|^{\sigma-\frac{1}{2}}\sqrt{2\pi}. 
\end{equation}

The following lemma is a form of Perron's formula, given by Chandrasekharan and Minakshisundaram \cite{Typicalmeans}.
\begin{lemma} \cite[Lemma 3.65]{Typicalmeans} \label{lem:perronsformula}
	If $f(s)=\sum_{n=1}^{\infty}a_n \lambda_n^{-s}$, with $\sum_{n=1}^{\infty}|a_n| \lambda_n^{-\alpha}< \infty$, then for $\r \geq 0$, $\sigma>0$, and $\sigma\geq \alpha$, we have
	$$\frac{1}{\Gamma(\rho+1)}\sideset{}{'}\sum_{\lambda_n \leqslant x}{} a_{n}(x-\lambda_n)^\rho=\frac{1}{2\pi i} \int_{\sigma-i \infty}^{\sigma+i \infty} \frac{f(s)\Gamma(s) x^{s+\rho}}{\Gamma(s+\rho+1)}ds,$$
	the dash indicating that the last term of the sum has to be multiplied by $\frac{1}{2}$ if $\r=0$ and $x=\lambda_n$. 
\end{lemma}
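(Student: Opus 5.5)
The plan is to reduce the statement to a single ``discontinuous integral'' and then interchange summation and integration. The key kernel identity I will establish first is that, for $y>0$, $\sigma>0$ and $\rho\ge 0$,
\begin{equation*}
\frac{1}{2\pi i}\int_{\sigma-i\infty}^{\sigma+i\infty}\frac{\Gamma(s)}{\Gamma(s+\rho+1)}\,y^{s}\,ds=
\begin{cases}
\dfrac{(1-1/y)^{\rho}}{\Gamma(\rho+1)}, & y>1,\\[2mm]
0, & y<1,
\end{cases}
\end{equation*}
with the value $\tfrac12$ at $y=1$ when $\rho=0$. When $\rho=0$ the integral is understood as the limit of the symmetric truncations $\int_{\sigma-iT}^{\sigma+iT}$.

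To get the kernel identity, I start from the Beta integral
\begin{equation*}
\int_0^1 u^{s-1}(1-u)^{\rho}\,du=\frac{\Gamma(s)\Gamma(\rho+1)}{\Gamma(s+\rho+1)}\qquad(\Re s>0).
\end{equation*}
This says that $\Gamma(s)/\Gamma(s+\rho+1)$ is the Mellin transform of $g(u)=(1-u)^{\rho}\mathbf 1_{(0,1)}(u)/\Gamma(\rho+1)$. Mellin inversion at $u=1/y$ then gives the identity.
\begin{itemize}
\item For $\rho>0$, $g$ is continuous and of bounded variation. Stirling's formula (Lemma \ref{lem:gamma asymp}, in the form \eqref{eq:stirling}) gives $|\Gamma(s)/\Gamma(s+\rho+1)|\asymp |t|^{-\rho-1}$ on $\Re s=\sigma$. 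So the inversion integral converges absolutely and inversion holds pointwise.
\item For $\rho=0$ the kernel is just $1/s$. The classical truncated estimate
\begin{equation*}
\Bigl|\frac{1}{2\pi i}\int_{\sigma-iT}^{\sigma+iT}\frac{y^{s}}{s}\,ds-\mathbf 1_{y>1}\Bigr|\le y^{\sigma}\min\Bigl(1,\frac{1}{\pi T|\log y|}\Bigr)\qquad(y\ne 1)
\end{equation*}
is proved by moving the segment to a rectangle on the left for $y>1$ (picking up the residue $1$ at $s=0$) or on the right for $y<1$. A direct computation gives $\tfrac12+O(\sigma/T)$ at $y=1$.
\end{itemize}

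Next I write $f(s)\Gamma(s)x^{s+\rho}/\Gamma(s+\rho+1)=\sum_n a_n x^{\rho}(x/\lambda_n)^{s}\,\Gamma(s)/\Gamma(s+\rho+1)$ on $\Re s=\sigma\ge\alpha$. Applying the kernel identity termwise with $y=x/\lambda_n$ gives
\begin{equation*}
x^{\rho}(1-\lambda_n/x)^{\rho}/\Gamma(\rho+1)=(x-\lambda_n)^{\rho}/\Gamma(\rho+1)
\end{equation*}
for $\lambda_n<x$, zero for $\lambda_n>x$, and $\tfrac12$ when $\lambda_n=x$ and $\rho=0$. Summing over $n$ yields the left-hand side with the dashed convention.

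It remains to justify the interchange.
\begin{itemize}
\item For $\rho>0$ this is Fubini. The double integral-sum is bounded by
\begin{equation*}
x^{\rho+\sigma}\sum_n|a_n|\lambda_n^{-\sigma}\int|t|^{-\rho-1}\,dt,
\end{equation*}
which is finite because $\sum|a_n|\lambda_n^{-\alpha}<\infty$ and $\sigma\ge\alpha$ (and the $\lambda_n$ increase to $\infty$).
\item For $\rho=0$, I first interchange the finite truncation $\int_{\sigma-iT}^{\sigma+iT}$ with the sum, which is legitimate by absolute convergence on a compact segment. The error against the dashed sum is then at most
\begin{equation*}
\sum_n|a_n|(x/\lambda_n)^{\sigma}\min\bigl(1,(T|\log(x/\lambda_n)|)^{-1}\bigr),
\end{equation*}
plus $O(1/T)$ from any term with $\lambda_n=x$. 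Each term tends to $0$ as $T\to\infty$, and each is dominated by $x^{\sigma}|a_n|\lambda_n^{-\sigma}$, which is summable. So dominated convergence for series gives error $\to0$.
\end{itemize}

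I expect this $\rho=0$ case, with only conditional convergence and the boundary case $\sigma=\alpha$, to be the main obstacle. The point to get right is that the truncation bound above is uniform in $n$, with constant $x^{\sigma}$ independent of $T$, so that dominated convergence applies.
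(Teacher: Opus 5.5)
The paper does not prove this lemma. It is quoted from Chandrasekharan--Minakshisundaram \cite{Typicalmeans} and used as a black box, so there is no in-text argument to compare yours with.

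Your proof is the standard one and it is correct. The Beta integral identifies $\Gamma(s)/\Gamma(s+\rho+1)$ as the Mellin transform of $(1-u)^{\rho}/\Gamma(\rho+1)$ on $(0,1)$. Mellin inversion (for $\rho>0$) or the truncated Perron bound (for $\rho=0$) then gives the discontinuous integral. Termwise integration is justified by Fubini, respectively by dominated convergence with the $T$-independent majorant $x^{\sigma}|a_n|\lambda_n^{-\sigma}$; this majorant is exactly what handles the boundary case $\sigma=\alpha$. Your symmetric-truncation reading for $\rho=0$ is actually necessary, not just convenient: when $x=\lambda_n$ for some $n$, the one-sided integral of $1/s$ diverges logarithmically.

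Another common route to the kernel identity shifts the contour left for $y>1$ and sums the residues of $\Gamma(s)$ at $s=-k$ into the binomial series for $(1-1/y)^{\rho}$. Your Beta/Mellin argument avoids estimating the integrand on far-left lines, and it handles non-integer $\rho$ at no extra cost.

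Two small points to tidy up:
\begin{itemize}
\item The estimate $|\Gamma(s)/\Gamma(s+\rho+1)|\asymp|t|^{-\rho-1}$ holds only for large $|t|$; near $t=0$ the kernel is merely bounded, using $\sigma>0$. So the Fubini majorant should be $\int(1+|t|)^{-\rho-1}\,dt$, not $\int|t|^{-\rho-1}\,dt$, which diverges at $0$.
\item The step $\sum|a_n|\lambda_n^{-\sigma}\le\lambda_1^{\alpha-\sigma}\sum|a_n|\lambda_n^{-\alpha}$ uses $\lambda_n\ge\lambda_1>0$. This is implicit in writing $\lambda_n^{-s}$, but you should say so.
\end{itemize}
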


In the following lemma, we provide a growth estimate for $Z_{\bs \alpha,\bs\beta}(x)$, defined in \eqref{def:Z_alpha,beta}. 	
	
\begin{lemma}\label{lem:exp order of Z_aplha,beta}
	For $x>0$, we have 
	\begin{equation}
		Z_{\bs \alpha,\bs\beta}(x)\ll \exp{\left(-cx^{\frac{1}{d_F^{'}}}\right)},
	\end{equation}
as $x\rightarrow \infty $, where $c>0$ is a constant depending on $\bs \a$ and $\bs \b$, and  $d_F^{'}=\sum_{i=1}^{r}\a_i$.
\end{lemma}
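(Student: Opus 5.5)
We bound $Z_{\bs\alpha,\bs\beta}(x)$ through its multiple-integral representation. Exactly as in Lemma \ref{lem:multiple mellin int}, but without the extra factor $\G(s)$, the convolution formula for multiple Mellin integrals gives, for $r\ge 2$,
\begin{equation}
Z_{\bs\alpha,\bs\beta}(x)=\int_{0}^{\infty}f_{\a_r,\b_r}(u_r)\frac{du_r}{u_r}\cdots\int_0^\infty f_{\a_2,\b_2}(u_2)\, f_{\a_1,\b_1}\!\left(\frac{x}{u_2\cdots u_r}\right)\frac{du_2}{u_2}.
\end{equation}
For $r=1$ we have $Z=f_{\a_1,\b_1}$ directly, and the bound is immediate with $c=1$.

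Write $\gamma_i=\Re(\b_i)/\a_i\ge 0$. Then
\begin{equation}
|f_{\a_i,\b_i}(u)|=\a_i^{-1}u^{-\gamma_i}\exp\bigl(-u^{1/\a_i}\bigr).
\end{equation}
Taking absolute values inside the integral, we must bound
\begin{equation}
\int_{(0,\infty)^{r-1}} \prod_{i\ge2} u_i^{-\gamma_i}e^{-u_i^{1/\a_i}}\,\Bigl(\tfrac{x}{u_2\cdots u_r}\Bigr)^{-\gamma_1}\exp\Bigl(-\bigl(\tfrac{x}{u_2\cdots u_r}\bigr)^{1/\a_1}\Bigr)\prod_{i\ge 2}\frac{du_i}{u_i}.
\end{equation}
Set $u_1=x/(u_2\cdots u_r)$, so that $\prod_{i=1}^r u_i=x$. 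The key inequality is a weighted AM--GM bound on the simplex $\prod u_i=x$: writing $v_i=u_i^{1/\a_i}$ and $p_i=\a_i/d_F'$, so that $\sum p_i=1$, we get
\begin{equation}
\sum_i v_i=\sum_i p_i\,\frac{v_i}{p_i}\ \ge\ \prod_i\Bigl(\frac{v_i}{p_i}\Bigr)^{p_i}=\kappa\, x^{1/d_F'},\qquad \kappa=\prod_i p_i^{-p_i}>0.
\end{equation}

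To extract decay while keeping the integral convergent, we split the exponential as $e^{-\sum v_i}=e^{-\frac12\sum v_i}\,e^{-\frac12\sum v_i}$. The first factor is at most $\exp(-\tfrac{\kappa}{2}x^{1/d_F'})$. It remains to show the leftover integral
\begin{equation}
\int \prod_{i\ge2}u_i^{-\gamma_i}e^{-\frac12 u_i^{1/\a_i}}\;u_1^{-\gamma_1}e^{-\frac12 u_1^{1/\a_1}}\prod_{i\ge2}\frac{du_i}{u_i}
\end{equation}
grows at most polynomially in $x$. This is the main obstacle. The powers $u_i^{-\gamma_i}$ with $\gamma_i\ge0$ are singular near $u_i=0$, and there is no $e^{-v}$ decay there to help.

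To handle this, we first replace $u_1^{-\gamma_1}=x^{-\gamma_1}\prod_{i\ge2}u_i^{\gamma_1}$, which is harmless. For the genuinely small $u_i$ we use that $\prod u_i=x\ge1$. If some $u_i$ with $i\ge2$ is tiny, then $u_1$ is correspondingly large, so the factor $e^{-\frac12 u_1^{1/\a_1}}$ supplies super-polynomial decay in $1/u_i$. This decay beats any power $u_i^{-\gamma_i-1}$ and keeps the integral convergent.

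We make this rigorous by a second, cruder splitting. On the region where every $u_i\ge \epsilon$, the integrand is bounded by $\prod_{i\ge2} C u_i^{\gamma_1}e^{-\frac12u_i^{1/\a_i}}u_i^{-1}$, and this is $O(\epsilon^{-A})$. On the region where some $u_j<\epsilon$, we instead bound $\sum v_i\ge v_1+\sum_{i\ne1}v_i$ and use AM--GM on the remaining variables to get $\sum v_i\ge \kappa' (x/u_j)^{1/(d_F'-\a_j)}$-type lower bounds, which dominate the small-$u_j$ singularity.

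If this bookkeeping becomes messy, a cleaner alternative is available. We shift the contour in \eqref{def:Z_alpha,beta} to $\Re s=a$ with $a$ large. Stirling's formula (Lemma \ref{lem:gamma asymp}) gives $\prod_i\G(\a_i a+\b_i)\ll e^{d_F' a\log a - d_F' a + O(a)}$, times integrable $t$-decay, so that
\begin{equation}
|Z(x)|\ll x^{-a}e^{d_F' a\log(a)-d_F'a+O(a)+O(\log a)}.
\end{equation}
Choosing $a=\eta\,x^{1/d_F'}$ with $\eta<1$ small then yields $Z(x)\ll \exp(-c\,x^{1/d_F'})$ for some $c>0$. This saddle-point route is the one I would actually commit to. The only delicate point is uniform control of the $t$-integral as $a\to\infty$, and Stirling's formula handles it: the ratio $|\G(\a_i(a+it)+\b_i)|/|\G(\a_i a+\b_i)|$ is at most $1$ and decays exponentially in $|t|/\sqrt{a}$ scale, so the integral contributes only a polynomial factor in $a$, which is absorbed into $c$.
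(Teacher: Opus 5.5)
Your committed route is correct and is essentially the paper's proof. You shift the line of integration to $\Re s=\eta x^{1/d_F'}$ with $\eta$ small and bound the Gamma product there by Stirling. The paper does the same, moving to $\Re s=N=Dx^{1/d_F'}$ with $D$ small via a rectangle with vertices $a\pm iT$, $N\pm iT$, and it relies on the same balance of $x^{-N}$ against $N^{d_F'N}e^{-d_F'N}C^N$. Two small points to tidy: compare with $\Gamma(\alpha_i a+\Re\beta_i)$ using $|\Gamma(\sigma+i\tau)|\le\Gamma(\sigma)$, not with $|\Gamma(\alpha_i a+\beta_i)|$. Also, the $t$-integral decays like a Gaussian of width $\sqrt a$ rather than exponentially in $|t|/\sqrt a$, though it still costs only a factor $O(\sqrt a)$.
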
	
\begin{proof}
	For some sufficiently large $T>0$, let us write 
	\begin{align}
		\frac{1}{2 \pi i}\int_{a-i\infty}^{a+i\infty} \prod_{i=1}^{r} \Gamma(\alpha_i s+ \beta_i) x^{-s}ds&=\frac{1}{2 \pi i}\left[\int_{a-i\infty}^{a-iT}+\int_{a-iT}^{a+iT}+\int_{a+iT}^{a+i\infty}\right]\prod_{i=1}^{r} \Gamma(\alpha_i s+ \beta_i) x^{-s}ds  \label{integralof gamma on line} \\
		&=\frac{1}{2 \pi i}\left[I_1+I_2+I_3\right], \qquad (\text{say}).
	\end{align}
Now take $N>a$ and consider a rectangle $R$ with vertices $a\pm iT$ and $N\pm iT$. We shall choose $N$ sufficiently large later, depending upon $x$. For $\Re(s) \geq a$, as $\Re(\alpha_i s+ \beta_i)>0$ for each $i$, the Gamma factors $\Gamma(\alpha_i s+ \beta_i)$ have no singularities inside the rectangle $R$,  so the integrand is analytic inside $R$. By Cauchy's residue theorem, we have
\begin{align}
	I_2=\int_{a-iT}^{a+iT}\prod_{i=1}^{r} \Gamma(\alpha_i s+ \beta_i) x^{-s}ds&= \left[\int_{a-iT}^{N-iT}+\int_{N-iT}^{N+iT}+\int_{N+iT}^{a+iT}\right]\prod_{i=1}^{r} \Gamma(\alpha_i s+ \beta_i) x^{-s}ds
	 \label{integral on contour1} \\
	 &=I_{21}+I_{22}+I_{23}, \qquad (\text{say}).
\end{align}
We will first estimate $I_{22}$ using estimates for the growth of the Gamma function.
 Writing $s=\s+it$, from Lemma \ref{lem:gamma asymp}, we have
\begin{equation}\label{gen asymp for gamma}
	|\G(\s+it)|\ll e^{-\s}\left|s^{s-\frac{1}{2}}\right|=e^{-\s} |s|^{\s-\frac{1}{2}}e^{-t\arg s},
\end{equation} since 
\begin{equation}
\left|s^{s-\frac{1}{2}}\right|= e^{\Re\left(\left(s-\frac{1}{2}\right)\log s\right)}=e^{\left(\s-\frac{1}{2}\right)\log |s|-t \arg s},
\end{equation} where the principal branch of the logarithm in $\mathbb{C}\setminus \left(\left. \infty,0\right. \right]$ is chosen.
Now observe that $-t\arg s \leq 0$ when  $-\pi < \arg s < \pi$, so \eqref{gen asymp for gamma} yields the bound
\begin{equation}\label{modt<sigma}
	|\G(\s+it)|\ll e^{-\s} |s|^{\s-\frac{1}{2}}.
\end{equation}
 Whenever $s$ satisfies $|t|>\s>0$, we have $|\arg s|>\frac{\pi}{4}$ and in this case, \eqref{gen asymp for gamma} gives the better bound 
\begin{equation} \label{modt>sigma}
		|\G(\s+it)|\ll e^{-\s} |s|^{\s-\frac{1}{2}}e^{-|t|\frac{\pi}{4}}.
\end{equation}

Taking  $\b_i=b_i+ic_i$, where $b_i,c_i \in \mathbb{R}$, we can use the bound \eqref{modt>sigma} for $\G(\a_is+\b_i)$ whenever 
$|\Im(\a_is+\b_i)|>\Re(\a_is+\b_i)$, that is when  $t>\s+\frac{b_i-c_i}{\a_i}$ or $t<-\s-\frac{b_i+c_i}{\a_i}$.
Now, let us consider $T_1=\max_{i=1}^{r}{\left[N+\frac{b_i-c_i}{\a_i}\right]},\, T_2=\min_{i=1}^{r}{\left[-N-\frac{b_i+c_i}{\a_i}\right]} $.
For $N$ sufficiently large, we have $T_1>0$ and $T_2<0$. We write
\begin{equation}
	\label{eq:3integrals}
I_{22}=\int_{N-iT}^{N+iT}\prod_{i=1}^{r} \Gamma(\alpha_i s+ \beta_i) x^{-s}ds=\left[\int_{-T}^{T_2}+\int_{T_2}^{T_1}+\int_{T_1}^{T}\right]\prod_{i=1}^{r} \Gamma(\alpha_i(N+it)+ \beta_i) x^{-(N+it)}idt.
\end{equation}
We will use \eqref{modt>sigma} to estimate the first and third integral on the right-hand side above. We get
\begin{multline}
\int_{T_1}^{T}\prod_{i=1}^{r}\left|\Gamma(\alpha_i(N+it)+ \beta_i)\right| x^{-N}dt \label{step1from T_1 to T}\\
 \ll x^{-N}e^{-\sum_{i=1}^{r}(\a_i N+b_i)}\int_{T_1}^{T} e^{-\frac{\pi}{4}\sum_{i=1}^{r}|\a_i t+c_i|}\prod_{i=1}^{r}\left|(\a_i N+b_i)+i(\a_i t+c_i)\right|^{\a_i N+b_i-\frac{1}{2}} dt.
\end{multline}
Letting $\g=\max_{i=1}^{r}{\left\{|\a_i|,|b_i|,|c_i|\right\}}$, we have $\left|\a_i N+b_i+i(\a_i t+c_i)\right| \leq \g(N+t+2)$, keeping in mind that $t>0$. Using the notation $d_F^{'}=\sum_{i=1}^{r}\a_i$, \eqref{step1from T_1 to T} is
\begin{equation}
	\ll_{b_i,c_i,r} x^{-N} e^{-Nd_F^{'}} \g^{Nd_F^{'}} \int_{T_1}^{T} e^{-\frac{\pi}{4}td_F^{'}} (N+t+2)^{Nd_F^{'}+\sum_{i=1}^{r}b_i-\frac{r}{2}}dt.
\end{equation}
We now make the change of variable $u=N+t+2$ to see that the above expression is 
\begin{align}
	&\ll_{b_i,c_i,r,d_F^{'}} x^{-N} e^{-Nd_F^{'}} \g^{Nd_F^{'}} \int_{N+T_1+2}^{N+T+2} e^{-\frac{\pi}{4}ud_F^{'}}e^{\frac{\pi}{4}Nd_F^{'}} u^{\left(Nd_F^{'}+\sum_{i=1}^{r}b_i-\frac{r}{2}\right)} du \\
	&\ll_{\a_i,b_i,c_i,r} x^{-N} e^{-Nd_F^{'}\left(1-\frac{\pi}{4}\right)} \g^{Nd_F^{'}} \int_{N+T_1+2}^{N+T+2} e^{-\frac{\pi}{4}ud_F^{'}} u^{\left(Nd_F^{'}+\sum_{i=1}^{r}b_i-\frac{r}{2}\right)} du. \label{step2from T_1 to T} 
	\end{align}
Putting $v=\frac{\pi}{4}ud_F^{'}$, the integrand above is $$e^{-v}\left(\frac{v}{\frac{\pi}{4}d_F^{'}}\right)^{Nd_F^{'}+\sum_{i=1}^{r}b_i-\frac{r}{2}} \ll e^{-v} v^{Nd_F^{'}+\sum_{i=1}^{r}b_i-\frac{r}{2}}, $$
so that \eqref{step2from T_1 to T} is
\begin{align}
	&\ll_{\a_i,b_i,c_i,r} x^{-N} e^{-Nd_F^{'}\left(1-\frac{\pi}{4}\right)} \g^{Nd_F^{'}} \Gamma\left(Nd_F^{'}+\sum_{i=1}^{r}b_i-\frac{r}{2}+1\right)\\
	&\ll_{\a_i,b_i,c_i,r} x^{-N} e^{-Nd_F^{'}\left(1-\frac{\pi}{4}\right)} \g^{Nd_F^{'}} \left(Nd_F^{'}+\sum_{i=1}^{r}b_i-\frac{r}{2}+1\right)^{Nd_F^{'}+\sum_{i=1}^{r}b_i-\frac{r}{2}+\frac{1}{2}}e^{-Nd_F^{'}},
\end{align}
using the estimate \eqref{modt<sigma}. For $N$ sufficiently large, we can write $Nd_F^{'}+\sum_{i=1}^{r}b_i-\frac{r}{2}+1\leq CNd_F^{'}$, for some $C>0$. We also have $\sum_{i=1}^{r}b_i-\frac{r}{2}+\frac{1}{2}\leq C_1$ for some constant $C_1$ depending on $\bs \a, \bs \b$. Thus, the above expression is 
\begin{align}
	&\ll_{\bs \a,\bs \b} x^{-N} e^{-Nd_F^{'}\left(2-\frac{\pi}{4}\right)} \g^{Nd_F^{'}}\left(CNd_F^{'}\right)^{Nd_F^{'}} \left(CNd_F^{'}\right)^{C_1}\\
	&\ll_{\bs \a,\bs \b} x^{-N} e^{-Nd_F^{'}\left(2-\frac{\pi}{4}\right)} (C\g)^{Nd_F^{'}}\left(Nd_F^{'}\right)^{Nd_F^{'}+C_1} \label{step3from T_1 to T}\\
	&\ll \exp \left(-Nd_F^{'}\left[\frac{\log x}{d_F^{'}}+2-\frac{\pi}{4}-\log C\g -\left(1+\frac{C_1}{Nd_F^{'}}\right)\log Nd_F^{'}+O_{\bs \a, \bs \b}\left(\frac{1}{Nd_F^{'}}\right)\right]\right). \label{finalstepfrom T_1 to T}
\end{align}
It can be checked that in order to ensure that the expression in square brackets is positive as $x\rightarrow \infty$, it is enough to choose 
\begin{equation}
	N=\frac{A}{ d_F^{'}}x^{\frac{1}{d_F^{'}}}, \label{eq:choice of N}
\end{equation} for some $A>0$ satisfying $\log A<2-\frac{\pi}{4}-\log C\g$.
This means that \eqref{finalstepfrom T_1 to T} is
\begin{equation}
	\ll \exp(-C_2x^{\frac{1}{d_F^{'}}}), \label{eq:estimationT_1 to T}
\end{equation}
for some positive constant $C_2$, depending upon $\bs \a, \bs \b$. In a similar way, we also get
\begin{equation}
	\int_{-T}^{T_2}\prod_{i=1}^{r} \Gamma(\alpha_i(N+it)+ \beta_i) x^{-(N+it)}idt \ll \exp(-C_3x^{\frac{1}{d_F^{'}}}), \label{eq:estimation-T to T_2}
\end{equation}
for some positive  constant $C_3 = C_3(\bs \a, \bs \b)$.

We now turn to the middle integral in \eqref{eq:3integrals}. Using the bound \eqref{modt<sigma}, we get
\begin{multline}
	\int_{T_2}^{T_1}\prod_{i=1}^{r}\left|\Gamma(\alpha_i(N+it)+ \beta_i)\right| x^{-N}dt\\
	\ll x^{-N}e^{-\sum_{i=1}^{r}(\a_i N+b_i)}\int_{T_2}^{T_1} \prod_{i=1}^{r}\left|(\a_i N+b_i)+i(\a_i t+c_i)\right|^{\a_i N+b_i-\frac{1}{2}} dt. \label{step1from T_2 to T_1}
\end{multline}
Taking $\g$ as defined previously, we have $\left|\a_i N+b_i+i(\a_i t+c_i)\right| \leq \g(N+|t|+2)$. With the notation $d_F^{'}$ as before, \eqref{step1from T_2 to T_1} is
\begin{equation}
	\ll_{b_i,r} x^{-N} e^{-Nd_F^{'}} \g^{Nd_F^{'}} \int_{T_2}^{T_1}(N+|t|+2)^{Nd_F^{'}+\sum_{i=1}^{r}b_i-\frac{r}{2}} dt.
\end{equation}
As  $T_1,T_2$ are both $\ll N$, we have $|t| \ll N$ in the domain of the above integral. Hence, for some $C>0$, the above expression is
\begin{align}
	&\ll_{b_i,r} x^{-N} e^{-Nd_F^{'}} \g^{Nd_F^{'}} (CN)^{Nd_F^{'}+\sum_{i=1}^{r}b_i-\frac{r}{2}}\int_{T_2}^{T_1} dt\\
	&\ll_{b_i,r} x^{-N} e^{-Nd_F^{'}} (C\g)^{Nd_F^{'}}  N^{Nd_F^{'}+\sum_{i=1}^{r}b_i-\frac{r}{2}+1}.
\end{align}
Similar to as done for \eqref{step3from T_1 to T}, the choice $N=Bx^{\frac{1}{d_F^{'}}}$, for some $B>0$ satisfying $\log B <1-\log C\g$, yields 
\begin{equation}
	\int_{T_2}^{T_1}\prod_{i=1}^{r}\Gamma(\alpha_i(N+it)+ \beta_i) x^{-N}dt  \ll \exp(-C_4x^{\frac{1}{d_F^{'}}}), \label{eq:estimationT_2 to T_1}
\end{equation} for some positive constant $C_4= C_4(\bs \a, \bs \b)$. Finally, we choose $N=Dx^{\frac{1}{d_F^{'}}}$, where $D= \min \{A, B\}$ and combine \eqref{eq:3integrals}, \eqref{eq:estimationT_1 to T}, \eqref{eq:estimation-T to T_2} and \eqref{eq:estimationT_2 to T_1}, to get
\begin{equation}
	I_{22}\ll \exp(-Cx^{\frac{1}{d_F^{'}}}),
\end{equation}
for some $C=C(\bs \a, \bs \b)>0$. Turning to $I_{23}$, we have
\begin{align}
	I_{23}=\int_{N+ iT}^{a + iT}\prod_{i=1}^{r} \Gamma(\alpha_i s+ \beta_i)& x^{-s}ds 
	\ll \int_{a}^{N}\prod_{i=1}^{r}\left|\Gamma(\alpha_i(\s+iT)+ \beta_i)\right| x^{-\s}d\s\\
	&\ll e^{-\frac{\pi}{4}T}\!\int_{a}^{N}\!\!\! e^{-\sum_{i=1}^{r}(\a_i \s+b_i)}\!\prod_{i=1}^{r}\left|(\a_i \s+b_i)+i(\a_i T+c_i)\right|^{\a_i \s+b_i-\frac{1}{2}}x^{-\s} d\s,
\end{align}upon using the bound \eqref{modt>sigma}. Using the same notations $\g, d_F^{'}$ as before, the above is
\begin{equation}
	\ll_{b_i,r} e^{-\frac{\pi}{4}T} e^{-ad_F^{'}}\g^{Nd_F^{'}}(N+T+2)^{Nd_F^{'}+\sum_{i=1}^{r}b_i-\frac{r}{2}}x^{-a}N,\label{finalhorizontalibdd}
\end{equation} considering the upper bound of the integrand in the interval $a \leq \s \leq N$ for $x>1,T>0$ and sufficiently large $N$. If we take $T\gg N^2$, due to the dominance of the exponential term in  \eqref{finalhorizontalibdd}, the contribution from this horizontal integral is negligible. In a similar manner, we can also bound $I_{21}$.

We now consider the integrals $I_1$ and $I_3$. We write
\begin{align}
	I_3=\int_{a+iT}^{a+i\infty}\prod_{i=1}^{r} \Gamma(\alpha_i s+ \beta_i) x^{-s}ds &\ll 
	\int_{T}^{\infty}\prod_{i=1}^{r}\left|\Gamma(\alpha_i(a+it)+ \beta_i)\right| x^{-a}dt \\
	&\ll \int_{T}^{\infty}e^{-\frac{\pi}{2}\sum_{i=1}^{r}|\a_it+c_i|}\prod_{i=1}^{r} |\a_it+c_i|^{\a_ia+b_i-\frac{1}{2}}x^{-a}dt,
\end{align}
using the Stirling formula \eqref{eq:stirling}.
For sufficiently large $T$, the above integral becomes negligible due to exponential decay, as seen for \eqref{finalhorizontalibdd}. The same applies to $I_1$. Thus, we conclude that
\begin{equation}
	Z_{\bs \alpha,\bs \beta}(x)\ll \exp{\left(-cx^{\frac{1}{d_F^{'}}}\right)},
\end{equation} where $c$ is a positive constant that depends upon $\bs \a, \bs \b$.
\end{proof}

As a corollary to the above lemma, we have the following estimate for the integral $Y_{\bs \alpha,\bs\beta}(x)$, which was defined in \eqref{def:Y_alpha,beta}. 
\begin{corollary} \label{cor:exp order of Z_aplha,beta}
 We have as  $x\rightarrow \infty$,
 \begin{equation}
 Y_{\bs \alpha,\bs\beta}(x) \ll \exp{\left(-cx^{\frac{1}{1+d_F^{'}}}\right)},
 \end{equation}

 where $d_F^{'}=\sum_{i=1}^{r}\a_i$ and $c$ is some positive constant depending on $\bs \a$ and $\bs \b$.
\end{corollary}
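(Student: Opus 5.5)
The corollary follows by applying Lemma \ref{lem:exp order of Z_aplha,beta} to an enlarged parameter set. Since $\Gamma(s)=\Gamma(1\cdot s+0)$, we have
\begin{equation}
Y_{\bs\alpha,\bs\beta}(x)=Z_{\bs\alpha^{*},\bs\beta^{*}}(x),\qquad \bs\alpha^{*}=(1,\alpha_1,\dots,\alpha_r),\quad \bs\beta^{*}=(0,\beta_1,\dots,\beta_r).
\end{equation}
The new parameters satisfy the standing hypotheses: every $\alpha_i^{*}>0$ and every $\Re(\beta_i^{*})\ge 0$. Hence on $\Re(s)=a>0$ no factor $\Gamma(\alpha_i^{*}s+\beta_i^{*})$ has a pole, and neither does any factor to the right of this line. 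These are exactly the properties used in the contour shift of the lemma's proof. The sum of the $\alpha$'s for the enlarged family is $1+d_F^{'}$.

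Applying the lemma with $r+1$ factors gives
\begin{equation}
Y_{\bs\alpha,\bs\beta}(x)\ll \exp\left(-c\,x^{\frac{1}{1+d_F^{'}}}\right),
\end{equation}
with $c>0$ depending on $\bs\alpha^{*},\bs\beta^{*}$, and hence only on $\bs\alpha,\bs\beta$.

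The one step that needs checking is that the lemma's proof uses nothing specific to the original family beyond these hypotheses. Its ingredients are:
\begin{itemize}
\item the Stirling bounds \eqref{modt<sigma} and \eqref{modt>sigma} applied to each factor;
\item the quantity $\gamma$, which becomes $\max\{1,|\alpha_i|,|b_i|,|c_i|\}$;
\item the thresholds $T_1,T_2$, now including the terms $N$ and $-N$ coming from the factor with $b=c=0$;
\item the choice $N\asymp x^{1/(1+d_F^{'})}$ in place of $N\asymp x^{1/d_F^{'}}$.
\end{itemize}
All of these go through verbatim with $d_F^{'}$ replaced by $1+d_F^{'}$. So the argument amounts to the substitution above together with this brief verification.

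As a cross-check, Lemma \ref{lem:multiple mellin int} expresses $Y$ as an iterated integral of $\exp(-x/(u_1\cdots u_r))$ against $\prod f_{\alpha_i,\beta_i}(u_i)$. A saddle-point heuristic for this integral gives the same exponent $1/(1+d_F^{'})$, confirming the rate.
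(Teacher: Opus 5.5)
Your proposal is correct and is the argument the paper intends. The paper gives no separate proof and states the corollary as an immediate consequence of Lemma \ref{lem:exp order of Z_aplha,beta}, which is obtained exactly as you do: treat $\Gamma(s)=\Gamma(1\cdot s+0)$ as one more factor, so that $Y_{\bs\alpha,\bs\beta}=Z_{\bs\alpha^{*},\bs\beta^{*}}$ with $\sum_i\alpha_i^{*}=1+d_F^{'}$. The lemma is already stated for any number of factors with $\alpha_i>0$ and $\Re(\beta_i)\ge 0$, so your step-by-step re-check of its proof is harmless but not needed.
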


In the following lemma, we prove an asymptotic formula for an integral involving multiple Gamma factors. Our result is different from those existing in the literature, such as Lemma 1 of Chandrasekharan and Narasimhan \cite{ChandNarApprox}, since the functional equation considered by us entails conjugates of $\b_i$ appearing in the Gamma factors in the numerator. Moreover, our result is not restricted to integer values of $\r$ and holds for any large real value of $\r$. 
\begin{lemma} \label{lem:asym for I_rho}
	Let  $\d \geq 0, d_F^{'}=\sum_{i=1}^{r}\a_i, a>0$ be sufficiently large  and $\r>(2a-\d)d_F^{'}-1$. Consider
	\begin{equation}
		I_\r(x):=\frac{1}{2 \pi i}\int_{a-i\infty}^{a+i\infty} \frac{\Gamma(\delta-s) \prod_{i=1}^{r}\Gamma(\alpha_is+\bar{\beta}_i)}{\Gamma(1+\delta-s+\rho)\prod_{i=1}^{r}\Gamma(\alpha_i(\delta-s)+\beta_i)}x^{\r+\d-s}ds.
	\end{equation}
	Then for any positive integer $m$, we have as $x\rightarrow \infty$,
	\begin{equation}
		I_\r(x)=\sum_{n=0}^{m} A_n x^{\frac{\omega'-n-2i\nu-\frac{1}{2}}{2d_F^{'}}}
		 \cos\left((h^{-1}x)^{\frac{1}{2d_F^{'}}}+\pi\left(d_F^{'}\gamma +\frac{n}{2}
		+ i\nu- \mu\right)\right) 
		+ O\bigg( x^{\frac{\omega'-m-\frac{3}{2}}{2d_F^{'}}}\bigg),
	\end{equation}
	where $A_n$ are some constants, and writing $\b_i=b_i+ic_i$, we have
	\begin{align} 
		&\nu=\sum_{i=1}^{r}c_i, \hspace{0.5mm} \omega'=d_F^{'}\d+(2d_F^{'}-1)\r,\hspace{0.5mm} h=\exp \left\{\sum_{i=1}^{r}2\a_i\log\a_i-2d_F^{'} \log2d_F^{'}\right\},\hspace{0.5mm} \mu=\frac{1}{2}+\sum_{i=1}^{r}\left(\beta_i-\frac{1}{2}\right),	\\
		&\g=-\left(\frac{\d}{2}+\frac{\r}{2d_F^{'}}+\frac{1}{4d_F^{'}}\right),\hspace{2mm}
		k=\sqrt{2\pi}(2d_F^{'})^{-2d_F^{'}\g+2i\nu+\frac{1}{2}} \exp{\left(-\sum_{i=1}^{r}(\a_i\d+2ic_i)\log \a_i\right)}. \label{notations in lem for I_rho}\\ 
	\end{align}
\end{lemma}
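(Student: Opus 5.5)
We follow the classical method of Chandrasekharan and Narasimhan \cite[Lemma 1]{ChandNarApprox}. The idea is to replace the ratio of Gamma factors in $I_\r(x)$ by its Stirling expansion, written as a finite sum of terms of the form $\G(\cdot)\cos(\cdot)$. Each such term can then be inverted exactly by Lemma \ref{lem:inv mel of Gammacos}.

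\textbf{Step 1: reduce to a single Gamma factor.} Put $w=\d-s$, so the integrand is
\[
\frac{\G(w)\prod_{i}\G(\a_i(\d-w)+\bar\b_i)}{\G(1+w+\r)\prod_i\G(\a_iw+\b_i)}\,x^{\r+w}.
\]
Apply the reflection formula to each $\G(\a_i(\d-w)+\bar\b_i)$, turning it into $\pi/\bigl(\sin(\cdot)\,\G(1-\a_i(\d-w)-\bar\b_i)\bigr)$. Then use Lemma \ref{lem:gamma asymp} on $\log$ of every remaining Gamma factor, in the region $|\arg w|<\pi-\e$. After the change of variable $2d_F^{'}w=z$ (up to a shift), the ratio should become
\[
k\,h^{z}\,\G\!\left(z+\omega''\right)\prod_i(\text{sine factors})^{-1}\Bigl(1+\sum_{n=1}^m \frac{e_n}{z^n}+O(|z|^{-m-1})\Bigr),
\]
with $\omega''$ an explicit exponent. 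This is the standard multiplication-formula collapse: since $\sum_i 2\a_i=2d_F^{'}$, the terms $\a_iw\log(\a_iw)$ combine into $2d_F^{'}w\log(2d_F^{'}w)$ plus $w\sum 2\a_i\log\a_i-2d_F^{'}w\log 2d_F^{'}$. That last linear term is where $h$ comes from.

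\textbf{Step 2: convert the sine factors into a cosine.} Expand the product of $1/\sin$ factors for $\Im w\to\pm\infty$ as $e^{\mp i\pi(\cdots)}$ times $(1+O(e^{-c|\Im w|}))$. Each leading piece becomes a combination $\cos(\frac{\pi}{2}z+\text{phase})$. The conjugates $\bar\b_i$ enter here: the imaginary parts $c_i$ do not cancel but produce the $i\nu$ shift in both the exponent and the phase, which is where our case differs from Chandrasekharan–Narasimhan. Absorb each correction $z^{-n}$ by rewriting $\G(z+\omega'')z^{-n}$ as $\G(z+\omega''-n)$ plus lower-order terms, iterating the standard recursion. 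This yields
\[
\sum_{n=0}^m A_n\,\G(z+\omega''-n)\cos\!\left(\tfrac{\pi}{2}(z+\ldots)+\pi(d_F^{'}\g+\tfrac n2+i\nu-\mu)\right)+R_m(z),
\]
where $R_m=O(|\G(z+\omega''-m-1)|)$ on vertical lines.

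\textbf{Step 3: invert term by term.} Move the line of integration far to the left in $z$, to $\Re z=c<-\frac12$, so that Lemma \ref{lem:inv mel of Gammacos} applies to each main term. The hypothesis $\r>(2a-\d)d_F^{'}-1$ guarantees absolute convergence on the original line and lets the contour be shifted. Any residues crossed only contribute polynomial terms, which are absorbed after a suitable choice of $c$. Each main term then gives
\[
A_n x^{\frac{\omega'-n-2i\nu-\frac12}{2d_F^{'}}}\cos\!\left((h^{-1}x)^{1/(2d_F^{'})}+\text{phase}\right),
\]
minus a finite polynomial. For the remainder $R_m$, choose the line so that $R_m$ is absolutely integrable, and bound it trivially, giving $O(x^{(\omega'-m-3/2)/(2d_F^{'})})$.

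\textbf{Main obstacle.} The hardest part is Step 1–2: justifying the Stirling replacement uniformly along the whole contour, including the bounded-$|t|$ portion where the asymptotics fail. The plan is to split off $|t|\le T_0$, treat it as $O(x^{\r+\d-\Re s})$ with $\Re s$ large (hence negligible), and to track the phase constants exactly so that they come out as $\pi(d_F^{'}\g+\frac n2+i\nu-\mu)$.
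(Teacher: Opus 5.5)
Your overall plan is the paper's: replace the Gamma ratio $G_\r(s)$ by a finite sum of $\G\cdot\cos$ terms using Stirling, then invert each term with Lemma~\ref{lem:inv mel of Gammacos}. However, two steps fail as written.

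\textbf{Step~1 puts the Gamma function on the wrong side.} After you reflect the numerator factors $\G(\a_i(\d-w)+\bar{\b}_i)$, every remaining Gamma factor except $\G(w)/\G(1+w+\r)\sim w^{-1-\r}$ sits in the denominator, with total weight $2d_F^{'}$ in $w$. So the multiplication-formula collapse gives $1/\G(2d_F^{'}w+\text{const})$, not $\G(z+\omega'')$. Correspondingly, $\prod_i 1/\sin(\cdots)$ has size $e^{-\pi d_F^{'}|t|}$. It cannot become a cosine, which has size $e^{\pi d_F^{'}|t|}$, while $|G_\r(\s+it)|$ grows only polynomially.

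You need a second reflection, $1/\G(u)=\G(1-u)\sin(\pi u)/\pi$. Then $\G(1-u)$ has argument $2d_F^{'}s+\text{const}$, as Lemma~\ref{lem:inv mel of Gammacos} requires; your $z=2d_F^{'}w$ has the wrong sign relative to $x^{-s}$. It is $\sin(\pi u)/\prod_i\sin(\cdots)$ that equals a constant times $\cos(\pi d_F^{'}s+\text{phase})$, up to a factor $1+O(e^{-c|t|})$. Reflecting the denominator factors $\G(\a_i(\d-s)+\b_i)$ instead would put everything in place at once. The paper avoids this bookkeeping entirely. It writes down $F_0^{\r}(s)$ with the cosine built in exactly via $\G(\tfrac12-u)^{-1}\G(\tfrac12+u)^{-1}=\pi^{-1}\cos\pi u$. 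It then checks $\log G_\r=\log F_0^{\r}+\sum_{n\le m}C_ns^{-n}+O(|s|^{-m-1})$ using Lemma~\ref{lem:gamma asymp}.

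\textbf{The more serious gap is Step~3 and your ``main obstacle''.} The polynomial terms in Lemma~\ref{lem:inv mel of Gammacos} contribute $x^{(\omega'-\frac12-n+j)/(2d_F^{'})}$ with $j\ge0$, which is at least the size of the $n$-th main term. No choice of $c$ lets you absorb them into the error; they must cancel exactly. You get a pure cosine only if $F_n^{\r}(s)x^{-s}$ is integrated over a contour with \emph{all} poles of $\G(2d_F^{'}s+2d_F^{'}\g-2i\nu-n)$ on its left. The line $\Re s=a$ is not such a contour: on it $\Re z=2d_F^{'}a-d_F^{'}\d-\r-\frac12-n<\frac12-n$, so for $n\ge1$ it lies to the left of the pole $z=0$.

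Likewise, the piece $|t|\le T_0$ is not negligible on $\Re s=a$. The hypothesis forces $a<\frac{\d}{2}+\frac{\r+1}{2d_F^{'}}$, so the trivial bound $x^{\r+\d-a}$ exceeds $x^{(\omega'-1)/(2d_F^{'})}$, far above the claimed error. Also, $\r>(2a-\d)d_F^{'}-1$ only gives $G_\r(a+it)\ll|t|^{-1-\r+(2a-\d)d_F^{'}}=o(1)$. That kills horizontal segments but is not absolute convergence.

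The missing device is the paper's contour $C_\r$. It has vertical tails on $\Re s=c_\r=\frac{d_F^{'}\d+\r}{2d_F^{'}}-\e$ with $0<\e<\frac{1}{4d_F^{'}}$, where $G_\r$ and every $F_n^{\r}$ are $O(|t|^{-1-2\e d_F^{'}})$. For $|t|\le T$ these are joined by a rectangle pushed far to the right. Then all singularities of the $F_n^{\r}$ lie to its left, so each $F_n^{\r}$-integral is exactly a cosine, and $x^{-s}$ is tiny on the bounded part.

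Finally, the remainder cannot be bounded trivially at the target exponent. On the line that would produce $x^{(\omega'-m-\frac32)/(2d_F^{'})}$ it is only $\ll|t|^{-1/2}$. The paper shifts it right by $\frac{m+1}{2d_F^{'}}$, loses $x^{\e}$, and compensates by carrying the expansion one term further.
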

\begin{proof}
		Let us define 
	\begin{equation}\label{def of G_rho}
		G_\r(s):=\frac{\Gamma(\delta-s) \prod_{j=1}^{r}\Gamma(\alpha_js+\bar{\beta}_j)}{\Gamma(1+\delta-s+\rho)\prod_{j=1}^{r}\Gamma(\alpha_j(\delta-s)+\beta_j)}.
	\end{equation}
Taking logarithm on both the sides above, we write
\begin{equation}\label{eq:log of G_rho}
	\log{G_\r(s)}=\log{\Gamma(\delta-s)}+\sum_{j=1}^{r}\log{\Gamma(\alpha_js+\bar{\beta}_j)}-\log{\Gamma(1+\delta-s+\rho)}-\sum_{j=1}^{r}\log{\Gamma(\alpha_j(\delta-s)+\beta_j)}.
\end{equation} 
 For the remainder of the proof, $C_n \, (n=1,\dots, m)$ will denote constants whose values may differ from line to line. Using Lemma \ref{lem:gamma asymp}, we obtain
\begin{align}
	\log{G_\r(s)}&=\left(\d-s-\frac{1}{2}\right)\log{(-s)}+s+\sum_{j=1}^{r}\left\{\left(\alpha_js+\bar{\beta}_j-\frac{1}{2}\right)\log{\a_js} -\a_js \right\}\\
	&-\left(1+\delta-s+\rho-\frac{1}{2}\right)\log{(-s)}-s  -\sum_{j=1}^{r}\left\{\left(\alpha_j(\d-s)+\beta_j-\frac{1}{2}\right)\log{(-\a_js)}+\a_js \right\} \\
	&+\sum_{n=1}^{m}C_{n}s^{-n}+O\left(\frac{1}{|s|^{m+1}}\right) \label{eq:log G_rho step 1}\\
	&=\left(d_F^{'}s+2d_F^{'}\g-\mu\right)\log{(-s)}+\left(d_F^{'}s+\sum_{j=1}^{r}\bar{\beta}_j-\frac{r}{2}\right)\log s-2d_F^{'}s+2s\sum_{j=1}^{r}\a_j\log \a_j \\
	&-\sum_{j=1}^{r}\a_j \d \log \a_j-2i \sum_{j=1}^{r}c_j \log \a_j +
	 \sum_{n=1}^{m}C_{n} s^{-n}+O\left(\frac{1}{|s|^{m+1}}\right)\label{eq:fina log G_rho},
\end{align} where $\g, \mu$ are as defined in \eqref{notations in lem for I_rho}. For $v_0=1$ and some constants $v_n, n=1, \dots, m$, to be chosen later, we define 
\begin{align}\label{expression of F_n rho}
	F_n^{\r}(s)&:=\frac{v_nkh^s \G\left(2d_F^{'}s+2d_F^{'}\g-2i\nu-n\right)}{\G\left(\frac{1}{2}-d_F^{'}s-2d_F^{'}\g+\mu \right)\G\left(\frac{1}{2}+d_F^{'}s+2d_F^{'}\g-\mu \right)}\\
	&=\frac{v_nk}{\pi}h^s \G\left(2d_F^{'}s+2d_F^{'}\g-2i\nu-n\right)\cos\left(\pi(d_F^{'}s+2d_F^{'}\g-\mu)\right),\label{eq:expression of F_rho in cos}
\end{align}with $\g, \nu, \mu, h, k$ as defined in \eqref{notations in lem for I_rho}. Taking logarithm of $F_0^{\r}(s)$, we have
\begin{align}
	\log F_0^{\r}(s)=\log k +s\log h+\log \G\left(2d_F^{'}s+2d_F^{'}\g-2i\nu\right)&-\log \G\left(\frac{1}{2}-d_F^{'}s-2d_F^{'}\g+\mu \right) \\ &-\log \G\left(\frac{1}{2}+d_F^{'}s+2d_F^{'}\g-\mu \right).
\end{align} Using the values of $k,h$ and Lemma \ref{lem:gamma asymp}, we get
\begin{align}
	\log F_0^{\r}(s)&=\!  \left(\! -2d_F^{'}\g+2i\nu+\frac{1}{2}\right) \log 2d_F^{'}+\frac{\log{2\pi}}{2}-\sum_{j=1}^{r}\a_j \d \log \a_j-2i\sum_{j=1}^{r}c_j \log \a_j   \\
	&+2s\sum_{j=1}^{r}\a_j\log\a_j-2d_F^{'}s \log2d_F^{'}+\left(2d_F^{'}s+2d_F^{'}\g-2i\nu-\frac{1}{2}\right)\log 2d_F^{'}s-2d_F^{'}s	\\
	& +\frac{\log{2\pi}}{2}-\log{(-d_F^{'}s)}\left(-d_F^{'}s-2d_F^{'}\g+\mu \right)
	-d_F^{'}s-\frac{\log{2\pi}}{2}
	-\left(d_F^{'}s+2d_F^{'}\g-\mu \right)\log d_F^{'}s\\
	&+d_F^{'}s-\frac{\log{2\pi}}{2}+\sum_{n=1}^{m}C_ns^{-n}+\left(\frac{1}{|s|^{m+1}}\right)\\
&=\left(d_F^{'}s+2d_F^{'}\g-\mu\right)\log{(-s)}+\left(d_F^{'}s+\sum_{j=1}^{r}\bar{\beta}_j-\frac{r}{2}\right)\log s-2d_F^{'}s+2s\sum_{j=1}^{r}\a_j\log \a_j \\
	&-\sum_{j=1}^{r}\a_j \d \log \a_j-2i \sum_{j=1}^{r}c_j \log \a_j +
	\sum_{n=1}^{m}C_{n} s^{-n}+O\left(\frac{1}{|s|^{m+1}}\right)\label{eq:final log Frho},
\end{align} upon simplification. From \eqref{eq:fina log G_rho} and \eqref{eq:final log Frho}, we obtain 
\begin{equation}
	\log{G_\r(s)}=	\log F_0^{\r}(s)+\sum_{n=1}^{m}C_{n} s^{-n}+O\left(\frac{1}{|s|^{m+1}}\right).
\end{equation}
Thus, we have
\begin{align}
	G_\r(s)&=F_0^{\r}(s)\left[1+\sum_{n=1}^{m}C_{n} s^{-n}+O\left(\frac{1}{|s|^{m+1}}\right)\right]\\
	&=F_0^{\r}(s)\left[1+\sum_{n=1}^{m}\frac{C_{n}}{\left(2d_F^{'}s+2d_F^{'}\g-2i\nu-1\right)\cdots\left(2d_F^{'}s+2d_F^{'}\g-2i\nu-n\right)} +O\left(\frac{1}{|s|^{m+1}}\right)\right]\\
	&=F_0^{\r}(s)+\sum_{n=1}^{m}\frac{C_{n}kh^s \G\left(2d_F^{'}s+2d_F^{'}\g-2i\nu\right)\cos\left(\pi(d_F^{'}s+2d_F^{'}\g-\mu)\right)}
	{\pi \left(2d_F^{'}s+2d_F^{'}\g-2i\nu-1\right)\cdots\left(2d_F^{'}s+2d_F^{'}\g-2i\nu-n\right)} +O\left(\frac{|F_0^{\r}(s)|}{|s|^{m+1}}\right),
\end{align}upon substituting the value of $F_0^{\r}(s)$ from \eqref{eq:expression of F_rho in cos}. Since $z\G(z)=\G(z+1)$, putting $v_n=C_n$ and using \eqref{eq:expression of F_rho in cos}, we obtain
 \begin{align} \label{eq:expression of G_rho}
	G_\r(s)=\sum_{n=0}^{m}F^{\r}_n(s)+O\left(\frac{|F^{\r}_0(s)|}{|s|^{m+1}}\right).
\end{align}
We will integrate $G_\rho(s)$ along a contour $C_\r$ defined as follows. 
Let $c_\r=\frac{d_F^{'} \d+\r}{2d_F^{'}}-\epsilon, 0<\epsilon<\frac{1}{4d_F^{'}}$.
The contour $C_\r$ consists of the line $(c_\r-i\infty, c_\r-iT)$, followed by the three sides $[c_\r-iT, c_\r+u-iT], [c_\r+u-iT, c_\r+u+iT], [c_\r+u+iT, c_\r+iT]$ of a rectangle, followed by the line $(c_\r+iT, c_\r+i\infty)$.
Here we choose $u$ and $T$ sufficiently large, so that all the singularities of $F^{\r}_n(s)$ for $n=0, \cdots, m$, lie to the left of $C_\r$. 
If we take a contour $C$ which is the same as $C_\r$, with $c_\r$ replaced by some $c<-\frac{1}{2}$, and $u,T$ chosen so that all the singularities of $\G(s)\cos\left(\frac{\pi}{2}s+\a \right)x^{-s}$ lie to the left of $C$, then by Lemma \ref{lem:inv mel of Gammacos},
\begin{equation} \label{eq:intgammacos}
	\frac{1}{2\pi i}\int_{C}\G(s)\cos\left(\frac{\pi}{2}s+\a \right)x^{-s}ds=\cos(x+\a),
\end{equation}because of the contribution of residues at the poles $s=-n,0\leq n< |c|$, of the integrand.
Multiplying \eqref{eq:expression of G_rho} by $\frac{x^{\r+\d-s}}{2\pi i}$ and integrating  with respect to $s$ along the curve $C_\rho$, we have
\begin{equation} \label{eq:int of G_rho}
	\frac{x^{\r+\d}}{2\pi i}\int_{C_\r}G_\r(s) x^{-s}ds=\frac{x^{\r+\d}}{2\pi i} \left[\sum_{n=0}^{m}\int_{C_\r}F^{\r}_n(s)x^{-s}ds+\int_{C_\r}O\left(\frac{|F^{\r}_0(s)|}{|s|^{m+1}}\right)x^{-s}ds\right].
\end{equation}
 We will evaluate both the integrals on the right-hand side above. Using \eqref{eq:expression of F_rho in cos}, we have 
\begin{align}
	\int_{C_\rho}F^{\rho}_n(s)x^{-s}ds
	&=
	\frac{v_n k}{\pi}
	\int_{C_\rho}\Gamma \left(2d_F^{'}s+2d_F^{'}\gamma -2i \nu-n\right)
	\cos\left(\pi(d_F^{'}s+2d_F^{'}\gamma-\mu)\right)
	\left(h^{-1}x\right)^{-s}ds
	\\
	&=(h^{-1}x)^{\gamma-\frac{n+2i\nu}{2d_F^{'}}} \frac{v_nk}{2\pi d_F^{'}} \int_{C_\rho^{'}} \Gamma(z)
	\cos\left(\frac{\pi}{2}z+\pi\left(d_F^{'}\gamma+\frac{n}{2}+ i\nu- \mu\right)\right)
	(h^{-1}x)^{\frac{-z}{2d_F^{'}}}dz,
\end{align} 
upon changing the variable $2d_F^{'}s+2d_F^{'}\g-2i\nu-n$ to $z$.
The change of variable shifts the curve $C_\r$ to $C_\r^{'}$, with $c_\r$ replaced by $c{'}_\r<-\frac{1}{2}$.  Using \eqref{eq:intgammacos}, we obtain
\begin{equation} \label{eq:int of F_n}
	\frac{1}{2\pi i}
	\int_{C_\r} F^{\r}_n(s) x^{-s} ds
	=
	A_n
	x^{\g-\frac{n+2i\nu}{2d_F^{'}}}
	\cos\left((h^{-1}x)^{\frac{1}{2d_F^{'}}}+\pi\left(d_F^{'}\g + \frac{n}{2}+ i\nu- \mu\right)\right),
\end{equation} where $A_n=\frac{v_nk}{2\pi d_F^{'}}h^{\frac{n+2i\nu}{2d_F^{'}}-\g}$. 
In order to deal with the second integral in \eqref{eq:int of G_rho}, we write
\begin{equation}\label{eq:error term F_rho}
	\frac{1}{2\pi i}\int_{C_\r}O\left(\frac{|F^{\r}_0(s)|}{|s|^{m+1}}\right)x^{-s}ds=\frac{1}{2\pi i}\int_{C_\r+\frac{m+1}{2d_F^{'}}}O\left(\frac{|F^{\r}_0(s)|}{|s|^{m+1}}\right)x^{-s}ds,
\end{equation}
where $C_\r+\frac{m+1}{2d_F^{'}}$ denotes the contour obtained by shifting $C_\r$ to the right by $\frac{m+1}{2d_F^{'}}$. This shift is justified as follows. Since all the singularities of the integrand lie to the left of $C_\r$, it is analytic  between the contours $C_\r$ and $C_\r+\frac{m+1}{2d_F^{'}}$.  
Using \eqref{expression of F_n rho}, the Stirling formula \eqref{eq:stirling} and the fact that $\cos(s)\ll \exp(|\Im(s)|)$, writing $s=\s+it$, in the region $c_\r \leq \s \leq c_\r+\frac{m+1}{2d_F^{'}}$, we see as $|t| \to \infty$,
	\begin{equation}\label{eq:estimation of F_0 upon s}
	\frac{|F^{\r}_0(s)| }{|s|^{m+1}}\ll 
	\frac{1}{|s|^{m+1}} |2d_F^{'}t-2\nu|^{2d_F^{'}\left(c_\r+\tfrac{m+1}{2d_F^{'}}\right)+2d_F^{'}\g-\frac{1}{2}}
	\ll |t|^{-1-2\epsilon d_F^{'}},
\end{equation} 
upon putting 
$c_\r=\frac{d_F^{'} \d+\r}{2d_F^{'}}-\epsilon, \g=-\left(\frac{\d}{2}+\frac{\r}{2d_F^{'}}+\frac{1}{4d_F^{'}}\right)$ 
and  using $|s|\sim |t|$. Hence, by analyticity and decay of the integrand which makes the contribution of the horizontal integrals zero as $|t|\to \infty$, \eqref{eq:error term F_rho} is justified. 

For any large fixed $\epsilon>0$,  the right-hand side of \eqref{eq:error term F_rho} is 
\begin{equation}\label{eq:int of F_0}
	\ll x^{-c_\r-\frac{m+1}{2d_F^{'}}}
	\left( \int_{|t|> \epsilon }\frac{|F^{\r}_0(s)|}{|s|^{m+1}}dt  + O_\epsilon(1) \right)
	\ll x^{-c_\r-\frac{m+1}{2d_F^{'}}}, 
\end{equation}
using  \eqref{eq:estimation of F_0 upon s}. 
Combining \eqref{eq:int of G_rho}, \eqref{eq:int of F_n}, \eqref{eq:int of F_0}, recalling that $c_\r=\frac{d_F^{'} \d+\r}{2d_F^{'}}-\epsilon$, 
and putting 
$\omega'=d_F^{'}\d+(2d_F^{'}-1)\r$, 
we have
\begin{align}
	&\frac{x^{\rho+\delta}}{2\pi i}
	\int_{C_\rho} 
	G_\rho(s) x^{-s}ds
	\\
	&=
	\sum_{n=0}^{m} A_n x^{\frac{\omega'-n-2i\nu-\frac{1}{2}}{2d_F^{'}}}
	\cos\left((h^{-1}x)^{\frac{1}{2d_F^{'}}}+\pi\left(d_F^{'}\gamma+ \frac{n}{2}+ i\nu- \mu\right)\right) 
	+ O\bigg(x^{\frac{\omega'-m-1}{2d_F^{'}}+\epsilon}\bigg)\\
	\\
	&=
	\sum_{n=0}^{m-1} A_n x^{\frac{\omega'-n-2i\nu-\frac{1}{2}}{2d_F^{'}}}
 \cos\left((h^{-1}x)^{\frac{1}{2d_F^{'}}}+\pi\left(d_F^{'}\gamma+\frac{n}{2}+ i\nu- \mu\right)\right)
	 + O\bigg( x^{\frac{\omega'-m-\frac{1}{2}}{2d_F^{'}}}+x^{\frac{\omega'-m-1}{2d_F^{'}}+\epsilon}\bigg).
	 \label{eq:finalint G_rho}
\end{align}
Since 
$\frac{\omega'-m-1}{2d_F^{'}}+\epsilon < \frac{\omega'-m-\frac{1}{2}}{2d_F^{'}}$ 
for 
$0<\epsilon<\frac{1}{4d_F^{'}}$,  
we obtain 
\begin{equation}\label{eq:final G_rho}
	\frac{x^{\rho+\delta}}{2\pi i}\int_{C_\rho}G_\rho(s) x^{-s}ds=\sum_{n=0}^{m-1} A_n x^{\frac{\omega'-n-2i\nu-\frac{1}{2}}{2d_F^{'}}}
 \cos\left((h^{-1}x)^{\frac{1}{2d_F^{'}}}+\pi\left(d_F^{'}\gamma+\frac{n}{2}+ i\nu- \mu\right)\right) 
	+ O\left( x^{\frac{\omega'-m-\frac{1}{2}}{2d_F^{'}}}\right).
\end{equation}
 To complete the proof, we need to show that
\begin{equation} \label{eq:G_rho equality}
	\int_{C_\r}G_\r(s) x^{-s}ds=\int_{a-i\infty}^{a+i \infty}G_\r(s) x^{-s}ds,
\end{equation} for $a$ sufficiently large. Note that \eqref{eq:final G_rho} holds for the contour $C_\r$, with any $u,T$ sufficiently large. Denoting $c_\r+u$ as $a$, by definition of the contour $C_\r$, we have 
\begin{align}\label{eq:int of G_rho on C}
	\int_{C_\r}G_\r(s) x^{-s}ds&=\left[\int_{c_\r-i\infty}^{c_\r-iT}+\int_{c_\r-iT}^{a-iT}+\int_{a-iT}^{a+iT}+\int_{a+iT}^{c_\r+iT}+\int_{c_\r+iT}^{c_\r+i\infty}\right]G_\r(s) x^{-s}ds.
\end{align}
We now claim that the horizontal integrals given by the second and fourth terms on the right-hand side above, vanish as $T\rightarrow \infty$. We use \eqref{def of G_rho} and the Stirling formula \eqref{eq:stirling} to obtain after some simplification,
\begin{equation}
	G_\r(s) 	\ll |T|^{-1-\r+(2a-\d)d_F^{'}},
\end{equation} which is $o(1)$  as $T\rightarrow \infty$, whenever $\r>(2a-\d)d_F^{'}-1$. This justifies the vanishing of the above-mentioned horizontal integrals in \eqref{eq:int of G_rho on C}. As $T\to \infty$, the remaining integrals in \eqref{eq:int of G_rho on C} yield the right-hand side of \eqref{eq:G_rho equality}, as needed.
\end{proof}

	\section{Proof of Theorem \ref{thm:FE implies mod rel1}}

		Let us start with the Mellin inversion of $F(s)$. We choose $a>\max{\{0,\s_a,\s_b\}}$ and $T$ large 
		 enough such that the rectangle $R$ with vertices $a\pm iT, \d-a \pm iT$ encloses all the singularities of $F(s)$. From the definition of $F(s)$ and from \eqref{def:dirser}, we have
		\begin{align} \label{mellin inv of F}
		 \frac{1}{2 \pi i}\int_{a-i\infty}^{a+i\infty}F(s) x^{-s}ds
			&=\lim_{T\rightarrow \infty} \frac{1}{2 \pi i}\int_{a-iT}^{a+iT}\prod_{i=1}^{r}\Gamma(\alpha_is+\beta_i)\phi(s)x^{-s}ds \\
			&=\frac{1}{2\pi i} \int_{a-i\infty}^{a+i\infty} \sum_{n=1}^{\infty}a_n  \prod_{i=1}^{r}\Gamma(\alpha_is+\beta_i)(\lambda_nx)^{-s}ds. \label{eq:lhs of mod rel}
		\end{align}
By Lemma \ref{lem:exp order of Z_aplha,beta}, we have 
	\begin{align} 
		\sum_{n=1}^{\infty}a_n\frac{1}{2\pi i} \int_{a-i\infty}^{a+i\infty} \prod_{i=1}^{r}\Gamma(\alpha_is+\beta_i)(\lambda_nx)^{-s}ds
		&\ll \sum_{n=1}^{\infty}|a_n| \exp{\Big(-c(\lambda_nx)^{\frac{1}{d_F^{'}}}\Big)} \\
		&\ll 	\sum_{n=1}^{\infty}|a_n| (\lambda_nx)^{-a},\label{abs conv of sum_int}
	\end{align} which is absolutely convergent since $a>\s_a$, where $\s_a$ is the abscissa of absolute convergence of $\phi(s)$. This justifies the interchange of summation and integration in \eqref{eq:lhs of mod rel}, to yield
		\begin{align} \label{LHS}
		 \frac{1}{2 \pi i}\int_{a-i\infty}^{a+i\infty}F(s) x^{-s}ds	=\sum_{n=1}^{\infty}a_n\frac{1}{2\pi i} \int_{a-i\infty}^{a+i\infty} \prod_{i=1}^{r}\Gamma(\alpha_is+\beta_i)(\lambda_nx)^{-s}ds.	
		\end{align}
		
We will now obtain a different expression for the left-hand side of \eqref{LHS} in order to derive the modular relation. Consider a curve $C$ lying inside the rectangle $R$ such that $C$ encloses all the singularities of $F(s)$. As all the singularities of $F(s)x^{-s}$ lie inside $C$, by Cauchy's residue theorem, we get
		\begin{multline}
			\frac{1}{2 \pi i}\int_{a-iT}^{a+iT}F(s) x^{-s}ds = \frac{1}{2 \pi i}\int_{C} F(s) x^{-s}ds-\frac{1}{2 \pi i}\left[\int_{a+iT}^{\delta-a+iT}+\int_{\delta-a+iT}^{\delta-a-iT}+\int_{\delta-a-iT}^{a-iT}\right]F(s) x^{-s}ds. \label{eq:contourintthm2.1}
		\end{multline} 
	We will show that the integrals on the horizontal lines will tend to zero as $T \rightarrow \infty$. Using Lemma \ref{lem:phragmen-lind}, we first show that
	\begin{equation}
		F(s)x^{-s}=o(1),
	\end{equation}
uniformly in the strip $\d-a \leq \Re(s)\leq a$ as $|\Im(s)|\rightarrow \infty$. From Definition \ref{def:fn eqn of F}, we see that $F(s)x^{-s}=\chi(s)(Qx)^{-s}$ satisfies the  big $O$ hypothesis required for Lemma \ref{lem:phragmen-lind}. Now it remains to verify the growth condition for $F(s)x^{-s}$ on the two vertical lines, $\Re(s)=a$ and $\Re(s)=\d-a$.

 On the line $\Re(s)=a$, we have $\phi(s)x^{-s}=O(1)$ as $\phi(s)$ is absolutely convergent on this line. Writing $s=a+it, \b_i=b_i+ic_i$, and using the asymptotic formula \eqref{eq:stirling}, we have
\begin{equation}
	\left|\prod_{i=1}^{r}\Gamma(\a_i s+\b_i)\right| \sim e^{-\frac{\pi}{2} \sum_{i=1}^{r}  |\a_i t+c_i|} \prod_{i=1}^{r}|\a_i t+c_i|^{\a_i a+b_i-\frac{1}{2}}(\sqrt{2\pi})^r,
\end{equation}
which implies that $\prod_{i=1}^{r}\Gamma(\a_i s+\b_i)=o(1)$ as $|t|\rightarrow \infty$. Thus, $F(s)x^{-s}=o(1)$ on the line $\Re(s)=a$.

Similarly, on the line $\Re(s)=\d-a$, we substitute the functional equation \eqref{fn eqn} and express $F(s)x^{-s}$ by
\begin{equation}
F(s)x^{-s}= \omega x^{-s} Q^{\d-2s} \overline{G(\d-\bar{s})}.
\end{equation}
Again, using the absolute convergence of $\psi(s)$ on the line $\Re(s)=\d-a$ and the Stirling formula for the Gamma factors, we conclude that $F(s)x^{-s}=o(1)$ on $\Re(s)=\d-a$. Applying Lemma \ref{lem:phragmen-lind}, we have $F(s)x^{-s}=o(1)$, uniformly in the strip $\d-a\leq \Re(s)\leq a$ as $|t|\rightarrow \infty$ and hence both the horizontal integrals vanish as $|T|\rightarrow \infty$. Thus, from \eqref{eq:contourintthm2.1}, we have
		\begin{align}
			\frac{1}{2 \pi i}\int_{a-i\infty}^{a+i\infty}F(s) x^{-s}ds&=\frac{1}{2 \pi i}\int_{C} F(s) x^{-s}ds+\frac{1}{2 \pi i}\int_{\delta-a-i\infty}^{\delta-a+i\infty}F(s) x^{-s}ds	\\
			&=P(x)+\frac{1}{2 \pi i}\int_{\delta-a-i\infty}^{\delta-a+i\infty}w Q^{\delta-2s} \overline{G(\delta-\bar{s})}x^{-s}ds,
		\end{align}
		upon using the functional equation \eqref{fn eqn} and denoting    the integral $\frac{1}{2 \pi i}\int_{C} F(s) x^{-s}ds$ by $P(x)$.	
		Now, we change the variable $\delta-s$ to $s$, to get
		\begin{align}
			\frac{1}{2 \pi i}\int_{a-i\infty}^{a+i\infty}F(s) x^{-s}ds
			&=P(x)+\frac{1}{2 \pi i}\int_{a-i\infty}^{a+i\infty}w Q^{2s-\delta} \overline{G(\bar{s})}x^{-\delta+s}ds\\
			&=P(x)+ \frac{\omega}{(xQ)^\delta}\sum_{n=1}^{\infty}\bar{b}(n)\frac{1}{2 \pi i}\int_{a-i\infty}^{a+i\infty} \prod_{i=1}^{r}\Gamma(\alpha_is+\bar{\beta_i})\bigg(\frac{\mu_n}{Q^2 x}\bigg)^{-s}ds,\label{RHS}
		\end{align}
	after interchanging the summation and integration, and using the definition of $G(s)$. Again, the interchange can be justified by absolute convergence of the above sum as explained in \eqref{abs conv of sum_int}. Combining \eqref{LHS} and \eqref{RHS}, we get the modular relation \eqref{mod relation1}.
	
	\section{Proof of Theorem \ref{thm:mod rel1 implies FE}}

To obtain the functional equation \eqref{fn eqn} from the modular relation \eqref{mod relation1}, we start with the expression for $Z_{\bs \alpha,\bs\beta}(\lambda_nx)$. From \eqref{def:Z_alpha,beta}, for  $x,a>0$, we have	
	\begin{equation}
		Z_{\bs \alpha,\bs\beta}(\lambda_nx)=\frac{1}{2 \pi i}\int_{a-i\infty}^{a+i\infty} \prod_{i=1}^{r} \Gamma(\alpha_i s+ \beta_i) (\lambda_nx)^{-s}ds.
	\end{equation}
	 Taking the Mellin transform, for $\Re(s)=\s>\max{\{0, \s_a, \s_b\}}$, we write 
	\begin{align}
		\prod_{i=1}^{r} \Gamma(\alpha_i s+ \beta_i)&= \int_{0}^{\infty} Z_{\bs \alpha,\bs\beta}(\lambda_nx)(\lambda_nx)^{s-1} d(\lambda_nx)\\
		&=\lambda_n^{s}\int_{0}^{\infty} Z_{\bs \alpha,\bs\beta}(\lambda_nx)x^{s-1} dx.
	\end{align}
	We multiply both sides by $Q^s a_{n}$ and then sum over $n$ to get left-hand side of the functional equation \eqref{fn eqn}:
	\begin{equation}\label{sum and int 3}
		Q^s \phi(s) \prod_{i=1}^{r} \Gamma(\alpha_i s+ \beta_i)=Q^s \sum_{n=1}^{\infty} a_n \int_{0}^{\infty} Z_{\bs \alpha,\bs\beta}(\lambda_nx)x^{s-1} dx.
	\end{equation}

In order to interchange summation and integration in the above expression on the right-hand side, let us first check the absolute convergence of the same. Using Lemma \ref{lem:exp order of Z_aplha,beta}, we can write
\begin{align}
	\sum_{n=1}^{\infty} a_n \int_{0}^{\infty} Z_{\bs \alpha,\bs\beta}(\lambda_nx)x^{s-1} dx&\ll \sum_{n=1}^{\infty} |a_n| \int_{0}^{\infty} \exp{\big(-c(\lambda_nx)^{\frac{1}{d_F^{'}}}\big)}x^{\s-1} dx\\
	&\ll \sum_{n=1}^{\infty} |a_n| \lambda_n^{-\s} \int_{0}^{\infty} \exp{\big(-cy^{\frac{1}{d_F^{'}}}\big)}y^{\s-1} dy, \label{interchangejusti3}
\end{align}
after changing the variable $\lambda_nx$ to $y$. Here, the sum $\sum_{n=1}^{\infty}a_n \lambda_n^{-\s}$ converges  absolutely as $\s> \s_a$ and the integral above converges due to the exponential decay of the integrand. Hence, we may interchange the order of summation and integration in \eqref{sum and int 3}, to obtain
	\begin{equation}
			Q^s F(s)= Q^s\bigg( \int_{0}^{1} \sum_{n=1}^{\infty} a_n Z_{\bs \alpha,\bs\beta}(\lambda_nx)x^{s-1} dx+\int_{1}^{\infty} \sum_{n=1}^{\infty} a_n Z_{\bs \alpha,\bs\beta}(\lambda_nx)x^{s-1} dx\bigg).
	\end{equation}
Now, we apply the modular relation \eqref{mod relation1} to the first integral to get
	\begin{align}
	Q^s F(s)	&= Q^s\bigg( \int_{0}^{1} P(x)x^{s-1} dx+\int_{0}^{1}  \Psi(1/x)x^{s-1} dx+\int_{1}^{\infty} \Phi(x)x^{s-1} dx\bigg)\\
		&=Q^s\bigg( \int_{0}^{1} P(x)x^{s-1} dx+\int_{1}^{\infty}  (\Psi(x)x^{-s-1} + \Phi(x)x^{s-1}) dx\bigg)\\
		&=I_1+J_1 \quad \text{(say)}, \label{lhs of fn eq}
	\end{align} 
	where 
	\begin{equation}
		\Phi(x)=\sum_{n=1}^{\infty} a_n Z_{\bs \alpha,\bs\beta}(\lambda_nx) \hspace{1cm} \text{and} \hspace{1cm} \Psi(x)= \frac{\omega x^\delta}{Q^\delta} \sum_{n=1}^{\infty} \bar{b}_n Z_{\bs\alpha,\bs{\bar{\beta}}}\left(\frac{\mu_nx}{Q^2}\right).
	\end{equation}
This gives us an expression for the left-hand side of the desired functional equation \eqref{fn eqn} in terms of integrals involving $Z_{\bs \alpha,\bs\beta}(x)$. We now attempt to do this for the right-hand side of \eqref{fn eqn} in a similar manner. From \eqref{def:Z_alpha,beta}, for  $x,a>0$, we have 
	\begin{equation}
		Z_{\bs\alpha,\bs{\bar{\beta}}}\bigg(\frac{\mu_nx}{Q^2}\bigg)=\frac{1}{2 \pi i}\int_{a-i\infty}^{a+i\infty} \prod_{i=1}^{r} \Gamma(\alpha_i s+ \bar{\beta_i}) \bigg(\frac{\mu_nx}{Q^2}\bigg)^{-s}ds.
	\end{equation}
	Again, taking the Mellin transform, multiplying both sides by $\omega Q^s\bar{b}_n$, and summing over $n$, we get for $\Re(s)>\max{\{0, \s_a, \s_b\}}$,
	\begin{align}
		\omega Q^s  \overline{G(\bar{s})} 
		&= \omega Q^{-s} \sum_{n=1}^{\infty} \bar{b}_n \int_{0}^{\infty} Z_{\bs\alpha,\bs{\bar{\beta}}}\bigg(\frac{\mu_nx}{Q^2}\bigg)x^{s-1} dx \\
			&= \omega Q^{-s}  \left[\int_{0}^{1}+ \int_{1}^{\infty}\right]\sum_{n=1}^{\infty} \bar{b}_n Z_{\bs\alpha,\bs{\bar{\beta}}}\bigg(\frac{\mu_nx}{Q^2}\bigg)x^{s-1} dx.
	\end{align} 
	Here the interchange is justified by using Lemma \ref{lem:exp order of Z_aplha,beta} as demonstrated before in \eqref{interchangejusti3}. Proceeding in a similar manner, we use the modular relation \eqref{mod relation1} in the first integral above, to obtain
	\begin{align}
		\omega Q^s \overline{G(\bar{s})}&= \omega Q^{-s}\bigg( \frac{Q^\delta}{\omega}\int_{0}^{1} \frac{1}{x^\delta}\Big(\Phi(1/x)-P(1/x)\Big) x^{s-1}+ \frac{Q^\delta}{\omega}\int_{1}^{\infty}\Psi(x)x^{-\delta+s-1} dx\bigg)\\
		&=Q^{\delta-s}\left(-\int_{1}^{\infty}P(x)x^{\delta-s-1}dx+\int_{1}^{\infty}(\Psi(x)x^{-\delta+s-1}+\Phi(x)x^{\delta-s-1})dx\right) \label{rhs of fn eqn}\\
		&= I_2+J_2 \quad \text{(say)}. \label{final rhs of fn eq}
	\end{align}
	On replacing $s$ by $\delta-s$ in \eqref{rhs of fn eqn}, we notice that $J_1$ and $J_2$ in \eqref{lhs of fn eq} and \eqref{final rhs of fn eq} respectively become equal, and by Definition \ref{def:res fn}, $I_1$ and $I_2$ are equal upon analytic continuation into the domain $D$. Hence the functional equation \eqref{fn eqn} follows.

\section{Proof of Theorem \ref{thm:FE implies Rieszsum}}

               
	We know that $\phi(s)=\sum_{n=1}^{\infty}a_n \lambda_n^{-s}$ and $\psi(s)=\sum_{n=1}^{\infty}b_n \mu_n^{-s}$ converge absolutely for $\Re(s)>\s_a$ and $\Re(s)>\s_b$ respectively. Now take $a=\s_b+\frac{1}{4d_F^{'}}+\frac{m}{2d_F^{'}}$, where $m$ is a sufficiently large integer such that $a>\max {\{0, \s_a, \s_b\}}$ and the set $S$ (as in Definition \ref{def:fn eqn of F}) lies inside the strip $\delta-a<\Re(s)<a$. Using Lemma \ref{lem:perronsformula}, for $\rho\geq0$, we can write
	\begin{equation}
		\frac{1}{\Gamma(\rho+1)}\sideset{}{'}\sum_{\lambda_n \leqslant x}{} a_{n}(x-\lambda_n)^\rho=\frac{1}{2\pi i} \int_{a-i \infty}^{a+i \infty} \frac{\phi(s)\Gamma(s) x^{s+\rho}}{\Gamma(s+\rho+1)}ds	. \label{lhs of riez sum}
	\end{equation} 
	Let us define $f(s):=\frac{\phi(s)\Gamma(s) x^{s+\rho}}{\Gamma(s+\rho+1)}$. We choose $T$ to be large enough such that $S$ lies inside the rectangle $R$ with vertices $a \pm iT$, $\delta-a\pm iT$. Subsequently, take a curve $C$ inside $R$, such that no singularities of $f(s)$ lie in the region between the rectangle $R$ and the curve $C$. Using the Cauchy residue theorem, we have
	\begin{equation}\label{integral of f(s)}
		\frac{1}{2 \pi i}\int_{a-iT}^{a+iT}f(s) ds = \frac{1}{2 \pi i}\int_{C} f(s) ds-\frac{1}{2 \pi i}\bigg[\int_{a+iT}^{\delta-a+iT}+\int_{\delta-a+iT}^{\delta-a-iT}+\int_{\delta-a-iT}^{a-iT}\bigg]f(s)ds.
	\end{equation}
	In order to show that the horizontal integrals are negligible as $T\rightarrow \infty$, it is enough to show that $f(s)=o(1)$ 
	uniformly in the strip $\d-a \leq \Re(s) \leq a$. We will show this by applying Lemma \ref{lem:phragmen-lind} to the function $f(s)$.
	
	On the line $\Re(s)=a,\phi(s)$ converges absolutely and so we have $\phi(s)x^{\r+a}\ll 1$. Using Stirling's formula \eqref{eq:stirling}, we find that
\begin{equation} \label{approx of two gamma frac}
	\frac{\G(s)}{\G(s+\r+1)} \sim |t|^{-\r-1},
\end{equation}
which is $o(1)$ since $\r> 0$. Hence we have $f(s)=o(1)$ on the line $\Re(s)=a$.

On the other vertical line $\Re(s)=\d-a$, we use the functional equation \eqref{fn eqn} and get
\begin{equation}
	f(s)=	\frac{\phi(s)\Gamma(s) x^{s+\rho}}{\Gamma(s+\rho+1)}=\frac{\G(s)\prod_{i=1}^{r}\G(\a_i(\d-s)+\bar{\b}_i)}{\G(s+\r+1)\prod_{i=1}^{r}\G(\a_is+\b_i)}\omega Q^{\d-2s}\overline{\psi(\d-\bar{s})}.
\end{equation}
Again, $\omega Q^{\d-2s}\overline{\psi(\d-\bar{s})}\ll 1$ on $\Re(s)=\d-a$ since $\psi(s)$ converges absolutely for $\Re(s)>\s_b$. Writing $\b_i=b_i+ic_i$ and $s=\d-a+it$, we apply  Stirling's formula \eqref{eq:stirling} to get after some computations,
\begin{equation}
	\frac{\G(s)\prod_{i=1}^{r}\G(\a_i(\d-s)+\bar{\b}_i)}{\G(s+\r+1)\prod_{i=1}^{r}\G(\a_is+\b_i)}\sim \frac{|t|^{\d-a}  \prod_{i=1}^{r}|\a_i t+c_i|^{\a_i a+b_i}}{|t|^{\d-a+\r+1}\prod_{i=1}^{r}|\a_i t+c_i|^{\a_i \d-\a_i a+b_i}},
\end{equation} as $|t|\rightarrow \infty$. On simplifying, this gives 
\begin{equation}
	f(s)\ll_{\bs \a,\bs \b} |t|^{-\r-1+(2a-\d) \sum_{i=1}^{r}\a_i},
\end{equation} which is $o(1)$ provided $\r>(2a-\d)d_F^{'}-1$.

Finally, in order to apply Lemma \ref{lem:phragmen-lind}, we need to bound $f(s)$ inside the strip $\d-a \leq \Re(s) \leq a$. Inside this strip, we may write $f(s)$ as 
\begin{equation}
f(s)=\frac{\Gamma(s) \chi(s) x^{s+\rho}}{\Gamma(s+\rho+1) \prod_{i=1}^r \Gamma\left(\alpha_i s+\beta_i \right) Q^s},
\end{equation} where $\chi(s)$ is as in Definition \ref{def:fn eqn of F}.
 As $\frac{1}{\G(s)}$ is an analytic function of order $1$, we have 
\begin{equation}
	\frac{1}{ \prod_{i=1}^r \Gamma\left(\alpha_i s+\beta_i \right)}= O\left(\exp\left(r|s|\right)\right).
\end{equation} Combining this with \eqref{chigrowthcond} and \eqref{approx of two gamma frac}, we have
\begin{equation}
	f(s)\ll \exp\left(\exp\left(\frac{\theta \pi|s|}{2a-\d}\right)\right),
\end{equation} uniformly for $\d-a \leq \Re(s) \leq a$ as needed. We may now apply Lemma \ref{lem:phragmen-lind} to conclude that the horizontal integrals in \eqref{integral of f(s)} vanish as $T \rightarrow \infty$, provided $\r> \max{\{0, (2a-\d)d_F^{'}-1\}}$.

Defining $Q_\r(x)$ as in \eqref{resfnQ_rho}, we have thus obtained from \eqref{integral of f(s)},
	\begin{align}
		\frac{1}{2 \pi i}\int_{a-i\infty}^{a+i\infty}f(s)ds&=\frac{1}{2 \pi i}\int_{C} f(s) ds+ 	\frac{1}{2 \pi i}\int_{\delta-a-i\infty}^{\delta-a+i\infty} \frac{\phi(s)\Gamma(s) x^{s+\rho}}{\Gamma(s+\rho+1)}ds \\
		&=Q_\rho(x)+\frac{1}{2 \pi i}\int_{\delta-a-i\infty}^{\delta-a+i\infty} \frac{\Gamma(s) x^{s+\rho}\omega Q^{\delta-2s}\prod_{i=1}^{r}\Gamma(\alpha_i(\delta-s)+\bar{\beta}_i)\overline{\psi(\delta-\bar{s})}}{\Gamma(s+\rho+1)\prod_{i=1}^{r}\Gamma(\alpha_is+\beta_i)}ds,
	\end{align}
 upon using the functional equation \eqref{fn eqn} and denoting the integral $\frac{1}{2 \pi i}\int_{C} f(s) ds$ by $Q_\rho(x)$. Changing the variable $\delta-s$ to $ s$ on the right-hand side above, we get
 \begin{equation}
\frac{1}{2 \pi i}\int_{a-i\infty}^{a+i\infty}f(s)ds =   
Q_\rho(x)+\frac{1}{2 \pi i}\int_{a-i\infty}^{a+i\infty} \frac{\Gamma(\delta-s) x^{\delta-s+\rho}\omega Q^{2s-\delta}\prod_{i=1}^{r}\Gamma(\alpha_is+\bar{\beta}_i)\overline{\psi(\bar{s})}}{\Gamma(1+\delta-s+\rho)\prod_{i=1}^{r}\Gamma(\alpha_i(\delta-s)+\beta_i)}ds.\label{rhsintegral}
 \end{equation}
We will now justify the interchange of the integral and sum (coming from $\overline{\psi(\bar{s})}=\sum_{n=1}^{\infty}\bar{b}_n\mu_n^{-s}$) on right-hand side of the above expression as follows.

Let us define 
\begin{equation}
	I_\r\left(\frac{\mu_nx}{Q^2}\right):=\frac{1}{2 \pi i}\int_{a-i\infty}^{a+i\infty} \frac{\Gamma(\delta-s) \prod_{i=1}^{r}\Gamma(\alpha_is+\bar{\beta}_i)}{\Gamma(1+\delta-s+\rho)\prod_{i=1}^{r}\Gamma(\alpha_i(\delta-s)+\beta_i)}\left(\frac{\mu_nx}{Q^2}\right)^{\d+\r-s}ds
\end{equation} and 
	\begin{equation}
		J_\r(x):=\sum_{n=1}^{\infty}\bar{b}_n  \frac{Q^{\d+2\r}}{\mu_n^{\d+\r}}I_\r\left(\frac{\mu_nx}{Q^2}\right).
	\end{equation}
	From Lemma \ref{lem:asym for I_rho}, we deduce as $x\rightarrow \infty$,
	\begin{equation}
		I_\rho \left(\frac{\mu_nx}{Q^2}\right)\sim  \left(\frac{\mu_nx}{Q^2}\right)^{\frac{d_F^{'}\delta+(2d_F^{'}-1)\rho-2i\nu-\frac{1}{2}}{2d_F^{'}}}\sum_{q=0}^{m} A_q \left(\frac{\mu_nx}{Q^2}\right)^{-\frac{q}{2d_F^{'}}}
	 \cos\left(\left(\frac{\mu_nx}{hQ^2}\right)^\frac{1}{2d_F^{'}}+\pi\left(d_F^{'}\gamma+\frac{q}{2}+ i\nu- \mu\right)\right),
	\end{equation}
 where $A_q$ are some constants and $\g, \nu, \mu, h$ are as defined in \eqref{notations in lem for I_rho}. 
  Using this, we have 
	\begin{equation} \label{asymp of J_rho}
		J_\r(x)\ll \sum_{n=1}^{\infty}|b_n|  \frac{Q^{\d+2\r}}{|\mu_n|^{\d+\r}} \left(\frac{|\mu_n|x}{Q^2}\right)^{\frac{d_F^{'}\d+(2d_F^{'}-1)\r-\frac{1}{2}}{2d_F^{'}}}.
	\end{equation}
	As $\sum_{n=1}^{\infty}\frac{b_n}{\mu_n^{s}}$ converges absolutely  for $\Re(s)> \s_b$, from \eqref{asymp of J_rho} we deduce that $J_\r(x)$  converges absolutely and uniformly in a compact domain for $\r> 2d_F^{'}\s_b-d_F^{'}\d-\frac{1}{2}$. We thus interchange the order of summation and integration in \eqref{rhsintegral}. Finally combining this with \eqref{lhs of riez sum} yields
\begin{align}
		&\frac{1}{\Gamma(\rho+1)}\sideset{}{'}\sum_{\lambda_n \leqslant x}{} a_{n}(x-\lambda_n)^\rho\\
		&=Q_\rho(x)+\frac{\omega x^{\delta+\rho}}{Q^\delta} \sum_{n=1}^{\infty}\bar{b}_n\frac{1}{2 \pi i}\int_{a-i\infty}^{a+i\infty} \frac{\Gamma(\delta-s) \prod_{i=1}^{r}\Gamma(\alpha_is+\bar{\beta}_i)}{\Gamma(1+\delta-s+\rho)\prod_{i=1}^{r}\Gamma(\alpha_i(\delta-s)+\beta_i)}\bigg(\frac{\mu_nx}{Q^2}\bigg)^{-s}ds,	\label{rhs of riez sum}
\end{align}
for $\r \geq 0$ and $\r >(2a-\d)d_F^{'} -1$. 
However, we can differentiate the identity \eqref{rhs of riez sum} with respect to $x$ and extend the validity of the identity for smaller values of $\r$, since differentiating with respect to $x$ gives
\begin{multline}
	\frac{1}{\Gamma(\rho)}\sideset{}{'}\sum_{\lambda_n \leqslant x}{} a_{n}(x-\lambda_n)^{\rho-1}=\\Q_{\rho-1}(x)+\frac{\omega x^{\delta+\rho-1}}{Q^\delta} \sum_{n=1}^{\infty}\bar{b}_n\frac{1}{2 \pi i}\int_{a-i\infty}^{a+i\infty} \frac{\Gamma(\delta-s) \prod_{i=1}^{r}\Gamma(\alpha_is+\bar{\beta_i)}}{\Gamma(\delta-s+\rho)\prod_{i=1}^{r}\Gamma(\alpha_i(\delta-s)+\beta_i)}\bigg(\frac{\mu_nx}{Q^2}\bigg)^{-s}ds,\label{diff rhs of riesz sum}
\end{multline} which is simply the identity \eqref{rhs of riez sum} with $\r$ replaced by $\r-1$. In order to justify this differentiation and to deduct how many times one can apply it repeatedly, we first observe that the series on the right-hand side of \eqref{rhs of riez sum} is uniformly convergent for $\r >(2\s_b-\d)d_F^{'} -\frac{1}{2}$. Hence, for any $\r >(2\s_b-\d)d_F^{'} -\frac{1}{2}$, the right-hand side of \eqref{rhs of riez sum} is continuously differentiable (since $Q_\r(x)$ is $C^\infty$ from Lemma \ref{lem:Q_rho is C infinite}). In particular, for any such value of $\r$, the right-hand side of \eqref{rhs of riez sum} is continuous. Since the values of $a_n$ on the left-hand side of \eqref{rhs of riez sum} are not all zero, for the left-hand sum to be continuous, we must have the exponent $\r>0$. We thus conclude that $(2\s_b-\d)d_F^{'}-\frac{1}{2}$ must be non-negative. Since $(2a-\d)d_F^{'}-1=(2\s_b-\d)d_F^{'}-\frac{1}{2}+m\geq m$, this means that \eqref{rhs of riez sum} can be differentiated $m$ times to yield the same identity for any $\r>(2\s_b-\d)d_F^{'}-\frac{1}{2}$. This completes the proof.



\section{Proof of Theorem \ref{thm:riesz sum implies modreln 2}}

 We first multiply the Riesz sum identity \eqref{riesz sum} throughout by 
\begin{equation}
	A(x):=y^{\r+1} \frac{1}{2\pi i}	\int_{k-i\infty}^{k+i\infty} \Gamma(s)\prod_{i=1}^{r}\Gamma\left(\alpha_i(s-\r-1)+\beta_i\right) (xy)^{-s}ds,
\end{equation} for a suitable $k>0$
and then integrate with respect to $x$ from $0$ to $\infty$. We have
\begin{align}
	&\frac{1}{\Gamma(\rho+1)}\int_{0}^{\infty}\sideset{}{'}\sum_{\lambda_n \leq x}{} a_{n}(x-\lambda_n)^\rho A(x)dx=\int_{0}^{\infty}Q_\r(x)A(x)dx\\ & +\int_{0}^{\infty}\bigg(\frac{\omega x^{\delta+\rho}}{Q^\delta}\bigg) \sum_{n=1}^{\infty}\bar{b}_n\frac{1}{2 \pi i}\int_{a-i\infty}^{a+i\infty} \frac{\Gamma(\delta-s) \prod_{i=1}^{r}\Gamma(\alpha_i s+\bar{\beta}_i)}{\Gamma(1+\delta-s+\rho)\prod_{i=1}^{r}\Gamma(\alpha_i(\delta-s)+\beta_i)}\bigg(\frac{\mu_nx}{Q^2}\bigg)^{-s}ds A(x)dx\\
	&=K_1(y)+K_2(y) \quad \text{(say).}\label{eq:riesztoauxmod1}
\end{align}
 Using the theory of the multiple Mellin integral \cite[p.53]{TitchmarshFourier} it can be verified that
	\begin{equation}
	A(x)=y^{\r+1}\int_{0}^{\infty}f_{\a_r, \b_r}(u_r)\frac{du_r}{u_r ^{\r+2}}  \cdots \int_{0}^{\infty}f_{\a_1, \b_1}(u_1)f_{\a_0, \b_0} \left(\frac{xy}{u_1u_2 \cdots u_r}\right) \frac{du_1}{u_1 ^{\r+2}}, \label{eq:expansionofA(x)}
\end{equation} 
where $f_{\a_0, \b_0}=\exp(-x)$ and $f_{\a_i, \b_i}= \frac{\exp(-x^{1/\a_i})}{\a_i x^{\b_i/\a_i}}$ for $i=1,\dots, r$. Further using Lemma \ref{lem:exp order of Z_aplha,beta}, we can deduce that
\begin{equation}\label{asymptote for A(x)}
	A(x)\ll \exp{\Big(-c(xy)^{\frac{1}{1+d_F^{'}}}\Big)}.
\end{equation}
Now, the integral on the left-hand side of \eqref{eq:riesztoauxmod1}
 converges absolutely due to the exponential decay of $A(x)$. Hence, using \eqref{eq:expansionofA(x)} and interchanging the integration and summation, we obtain that
\begin{multline}
	\frac{1}{\Gamma(\rho+1)}\int_{0}^{\infty}\sideset{}{'}\sum_{\lambda_n \leq x}{} a_{n}(x-\lambda_n)
	^\rho A(x)dx
\label{lhs}
	\\=\frac{y^{\r+1}}{\G(\r+1)} \sum_{n=1}^{\infty} a_{n} \int_{\lambda_n}^{\infty} \! \int_{0}^{\infty}f_{\a_r, \b_r}(u_r) \frac{du_r}{u_r ^{\r+2}} \cdots \! \int_{0}^{\infty} \! \! f_{\a_1, \b_1}(u_1)f_{\a_0, \b_0} \left(\frac{xy}{u_1u_2 \cdots u_r}\right) \frac{du_1}{u_1 ^{\r+2}}(x-\lambda_n)^\r dx.
	\end{multline}
 Again, due to the absolute convergence of the above, we use Fubini's theorem to interchange the order of integration to see that the above expression equals
\begin{equation}\label{intermidiatestep 1}
	\frac{y^{\r+1}}{\G(\r+1)} \sum_{n=1}^{\infty} a_{n} \! \int_{0}^{\infty}f_{\a_r, \b_r}(u_r) \frac{du_r}{u_r ^{\r+2}} \cdots \int_{0}^{\infty}f_{\a_1, \b_1}(u_1) \left[\int_{\lambda_n}^{\infty} f_{\a_0, \b_0} \left(\frac{xy}{u_1u_2 \cdots u_r}\right) (x-\lambda_n)^\r dx\right] \frac{du_1}{u_1 ^{\r+2}}.
\end{equation}
 As $\r>0$, by the definition of the Gamma function, we have
\begin{equation}
	\int_{\lambda_n}^{\infty} f_{\a_0, \b_0} \left(\frac{xy}{u_1u_2 \cdots u_r}\right) (x-\lambda_n)^\r dx= \G(\r+1) \exp\left(\frac{-\lambda_ny}{u_1\cdots u_r}\right)\left(\frac{u_1 \cdots  u_r}{y}\right)^{\r+1}.
\end{equation}
 Substituting this into \eqref{intermidiatestep 1}, the left-hand side of \eqref{lhs} equals
\begin{multline}
 \sum_{n=1}^{\infty} a_{n} \int_{0}^{\infty}f_{\a_r, \b_r}(u_r) \frac{du_r}{u_r} \cdots \int_{0}^{\infty}f_{\a_1, \b_1}(u_1) \exp\left(\frac{-ny}{u_1\cdots u_r}\right) \frac{du_1}{u_1}.
\end{multline} 
The integral on the right-hand side above is simply $Y_{\bs\alpha,\bs\beta}(ny)$ of Lemma \ref{lem:multiple mellin int}, which yields the left-hand side of our auxiliary modular relation \eqref{aux mod relation}.

We now try to derive the right-hand side of \eqref{aux mod relation}. The integral $K_1(y)$ (in \eqref{eq:riesztoauxmod1}) 
is absolutely convergent as $Q_\r(x)$ is a function of the form $\sum c_\alpha x^{\alpha}$ with finite number of terms and $A(x)$ has exponential decay. Recalling the definition \eqref{resfnQ_rho} of $Q_\r(x)$, we interchange the order of integration to see that $K_1(y)$ is 
\begin{equation} \label{intermidiatestep2}
	\frac{y^{\r+1}}{2 \pi i} \! \int_{C} \! \frac{\phi(s)\Gamma(s) }{\Gamma(s+\rho+1)} ds \! \int_{0}^{\infty} \!\! f_{\a_r, \b_r}(u_r) \frac{du_r}{u_r ^{\r+2}} \cdots \! \int_{0}^{\infty}\!\! f_{\a_1, \b_1}(u_1) \left[\int_{0}^{\infty} \!\! f_{\a_0, \b_0} \left(\frac{xy}{u_1u_2 \cdots u_r}\right) x^{s+\rho} dx\right] \frac{du_1}{u_1 ^{\r+2}}.
\end{equation}
Again, since
\begin{equation}
	\int_{0}^{\infty} f_{\a_0, \b_0} \left(\frac{xy}{u_1u_2 \cdots u_r}\right) x^{s+\rho} dx = \G(s+\r+1) \left(\frac{u_1 \cdots  u_r}{y}\right)^{s+\r+1},
\end{equation}
we obtain 
\begin{equation}
	K_1(y)=\frac{1}{2 \pi i} \int_{C} \phi(s)\Gamma(s)  \int_{0}^{\infty}f_{\a_r, \b_r}(u_r) u_r ^{s-1} du_r \cdots \int_{0}^{\infty}f_{\a_1, \b_1}(u_1) u_1 ^{s-1}  du_1 y^{-s} ds.
\end{equation}
Recalling \eqref{def:gamma fn}, it is evident that $K_1(y)$ is equal to $P_1(y)$, which is the first term on the right-hand side of the needed auxiliary modular relation \eqref{aux mod relation}.

We now turn to $K_2(y)$, which we will denote as $K_2$ for simplicity. By the bound \eqref{asymp of J_rho}, we have
\begin{equation}
	K_2\ll \int_{0}^{\infty} \bigg(\frac{\omega x^{\delta+\rho}}{Q^\delta}\bigg) \sum_{n=1}^{\infty}|b_n| \left(\frac{Q^2}{|\mu_n|x}\right)^{\d+\r} \left(\frac{|\mu_n|x}{Q^2}\right)^{\frac{d_F^{'}\d+(2d_F^{'}-1)\r-\frac{1}{2}}{2d_F^{'}}} A(x)dx.
\end{equation}
Here the series inside the above integral converges absolutely for $\r> (2\s_b-\d)d_F^{'}-\frac{1}{2}$ and thus $K_2$ converges absolutely due to \eqref{asymptote for A(x)}. So we interchange the summation and integral to get
\begin{multline}
	K_2=y^{\r+1}\left(\frac{\omega }{Q^\delta}\right) \sum_{n=1}^{\infty}\bar{b}_n\frac{1}{2 \pi i} \int_{0}^{\infty}\left[\int_{a-i\infty}^{a+i\infty} \frac{\Gamma(\delta-s) \prod_{i=1}^{r}\Gamma(\alpha_i(s)+\bar{\beta_i)}}{\Gamma(1+\delta-s+\rho)\prod_{i=1}^{r}\Gamma(\alpha_i(\delta-s)+\beta_i)}\bigg(\frac{\mu_nx}{Q^2}\bigg)^{-s}ds\right. \times 
	\\
	\left.\int_{0}^{\infty}f_{\a_r, \b_r}(u_r)\frac{du_r}{u_r ^{\r+2}}  \cdots \int_{0}^{\infty}f_{\a_1, \b_1}(u_1)f_{\a_0, \b_0} \left(\frac{xy}{u_1u_2 \cdots u_r}\right) \frac{du_1}{u_1 ^{\r+2}} x^{\delta+\rho} \right]dx.
\end{multline}
Again we use Fubini's theorem to interchange the order of integration and obtain
\begin{multline} \label{intermidiatestep3}
	K_2=y^{\r+1}\bigg(\frac{\omega }{Q^\delta}\bigg) \sum_{n=1}^{\infty}\bar{b}_n\frac{1}{2 \pi i} \int_{a-i\infty}^{a+i\infty} \frac{\Gamma(\delta-s) \prod_{i=1}^{r}\Gamma(\alpha_i(s)+\bar{\beta_i)}}{\Gamma(1+\delta-s+\rho)\prod_{i=1}^{r}\Gamma(\alpha_i(\delta-s)+\beta_i)}\bigg(\frac{\mu_n}{Q^2}\bigg)^{-s}ds \times \\
	\int_{0}^{\infty}f_{\a_r, \b_r}(u_r)\frac{du_r}{u_r ^{\r+2}}  \cdots \int_{0}^{\infty}f_{\a_1, \b_1}(u_1) \left[\int_{0}^{\infty}f_{\a_0, \b_0} \left(\frac{xy}{u_1u_2 \cdots u_r}\right)  x^{\delta+\rho-s} dx \right] \frac{du_1}{u_1 ^{\r+2}}.
\end{multline} 
The term inside the square brackets is $\G(1+\delta-s+\r) \left(\frac{u_1 \cdots  u_r}{y}\right)^{1+\delta-s+\r} $, which gives
\begin{multline}
K_2=\frac{\omega }{(Qy)^\delta}\sum_{n=1}^{\infty}\bar{b}_n\frac{1}{2 \pi i} \int_{a-i\infty}^{a+i\infty} \frac{\Gamma(\delta-s) \prod_{i=1}^{r}\Gamma(\alpha_i(s)+\bar{\beta_i)}}{\prod_{i=1}^{r}\Gamma(\alpha_i(\delta-s)+\beta_i)}\bigg(\frac{\mu_n}{Q^2 y}\bigg)^{-s}ds \times \\
	\int_{0}^{\infty}f_{\a_r, \b_r}(u_r) u_r ^{\delta-s-1} du_r \cdots \int_{0}^{\infty}f_{\a_1, \b_1}(u_1) u_1 ^{\delta-s-1} du_1 .
\end{multline} 
We then use  \eqref{def:gamma fn} to evaluate the integrals above and obtain the second term in the modular relation \eqref{aux mod relation}, which completes the proof of our theorem.

\section{Proof of Theorem \ref{thm:modreln2 implies FE}}

Since the proof is analogous to that of Theorem \ref{thm:mod rel1 implies FE}, we only mention the main steps. Taking the Mellin transform of $Y_{\bs \alpha,\bs\beta}(\lambda_nx)$, multiplying both sides by $Q^s a_{n}$ and summing over $n$, we obtain
	\begin{equation}
	Q^s \phi(s) \G(s) \prod_{i=1}^{r} \Gamma(\alpha_i s+ \beta_i)=Q^s \sum_{n=1}^{\infty} a_n \int_{0}^{\infty} Y_{\bs \alpha,\bs\beta}(\lambda_n x)x^{s-1} dx.
\end{equation}
Using Corollary \ref{cor:exp order of Z_aplha,beta}, one can justify the exchange of summation and integration to get 
\begin{align}
	Q^s \G(s) F(s)&= Q^s\bigg( \int_{0}^{1} \sum_{n=1}^{\infty} a_n Y_{\bs \alpha,\bs\beta}(\lambda_nx)x^{s-1} dx+\int_{1}^{\infty} \sum_{n=1}^{\infty} a_n Y_{\bs \alpha,\bs\beta}(\lambda_nx)x^{s-1} dx\bigg)\\
	&=Q^s\bigg( \int_{0}^{1} P_1(x)x^{s-1} dx+\int_{1}^{\infty}  \big(\Psi(x)x^{-s-1} dx+ \Phi(x)x^{s-1}\big) dx\bigg),\label{aux lhs of fn eqn}
\end{align} on applying the modular relation \eqref{aux mod relation}. Note that here
\begin{equation}
	\Phi(x)=\sum_{n=1}^{\infty} a_n Y_{\bs \alpha,\bs\beta}(\lambda_n x) \hspace{1cm} \text{and} \hspace{1cm} \Psi(x)= \frac{\omega x^\delta}{Q^\delta} \sum_{n=1}^{\infty} \bar{b}_n X_{\bs \alpha,\bs{\bar{\beta}}}\left(\frac{\mu_nx}{Q^2}\right).
\end{equation} We repeat this procedure for 
$X_{\bs \alpha,\bs{\bar{\beta}}}\big(\frac{\mu_nx}{Q^2}\big)$ to obtain 
\begin{equation}
	\omega Q^s  \overline{\psi(\bar{s})} \G(\delta-s)\prod_{i=1}^{r} \Gamma(\alpha_i s+ \bar{\beta_i})
	=\omega Q^{-s} \int_{0}^{\infty}\sum_{n=1}^{\infty} \bar{b}_n  X_{\bs \alpha,\bs{\bar{\beta}}}\bigg(\frac{\mu_nx}{Q^2}\bigg)x^{s-1} dx.
\end{equation} 
Using the modular relation \eqref{aux mod relation}, we obtain
\begin{align}
	\omega Q^s \G(\delta-s) \overline{G(\bar{s})}&= \omega Q^{-s}\bigg(\frac{Q^\delta}{\omega}\int_{0}^{1} \big(\Phi(1/x)-P_1(1/x)\big) x^{-\delta+s-1}+\frac{Q^\delta}{\omega}\int_{1}^{\infty}\sum_{n=1}^{\infty}\Psi(x)x^{-\delta+s-1} dx\bigg)\\
	&=Q^{1-s}\left(-\int_{1}^{\infty}P_1(x)x^{\delta-s-1}dx+\int_{1}^{\infty}\big(\Psi(x)x^{-\delta+s-1}+\Phi(x)x^{\delta-s-1}\big)dx\right). \label{aux rhs of fn eqn}
\end{align} As done after \eqref{rhs of fn eqn}, it can be checked that replacing $s$ by $\d-s$ in \eqref{aux rhs of fn eqn} makes it equal to \eqref{aux lhs of fn eqn}. Thus, we have obtained 
\begin{equation}
	Q^s \G(s) F(s)= 	\omega Q^{\delta-s} \G(s) \overline{G(\delta-\bar{s})},
\end{equation} which gives the functional equation \eqref{fn eqn} since $\G(s)$ does not vanish.

\section{Special cases}

Let us consider the generalized divisor function $\s^{(k)}_z(n)$ defined for $k \in \mathbb{N}$, $z\in \mathbb{C}$ by
\begin{equation}
		\s^{(k)}_z(n):=\sum_{d^k|n}d^z.
\end{equation}
The Dirichlet series associated to $\s^{(k)}_z(n)$ is given by
\begin{equation}
	\sum_{n=1}^{\infty}\frac{\s^{(k)}_z(n)}{n^s}=\z(s)\z(ks-z), \quad \Re(s)>\max\left\{1,\frac{1+\Re(z)}{k}\right\}.
\end{equation}
It can be seen that $\zeta(ks-z)$ satisfies the functional equation 
$$\pi^{-\frac{ks-z}{2}}\Gamma\left(\frac{sk-z}{2}\right)\zeta(ks-z)=\pi^{-\frac{(1-ks+z)}{2}}\Gamma\bigg(\frac{1-ks+z}{2}\bigg)\zeta(1-ks+z).$$
Hence, we can say $\zeta(s)\zeta(ks-z)$ satisfies the functional equation 
\begin{multline} \label{FE for gen.div.fn}
	\pi^{-\frac{s}{2}-\frac{ks-z}{2}}\Gamma\bigg(\frac{s}{2}\bigg) \Gamma\bigg(\frac{ks-z}{2}\bigg)\zeta(s)\zeta(ks-z)\\ =\pi^{-\frac{(1-s)}{2}-\frac{(1-ks+z)}{2}}\Gamma\bigg(\frac{1-s}{2}\bigg) \Gamma\bigg(\frac{1-ks+z}{2}\bigg)\zeta(1-s)\zeta(1-ks+z), 
\end{multline}
which is of the form \eqref{generalfneqn} only if $k=1$ and $\d=z+1$ or  if $z=\frac{k-1}{2}$ and $\d=1$. We discuss our main results for these two cases in the following examples.

\begin{example}
	If we consider the case $k=1$, our function can be denoted by $\s_z(n)$, defined as the sum of the $z^{th}$ powers of the divisors of $n$. The associated Dirichlet series 
$
		\sum_{n=1}^{\infty} \frac{\s_z(n)}{n^s}
$
	satisfies 
	\begin{equation} \label{FE for sigma_z(n)}
		\pi^{-\frac{s}{2}-\frac{s-z}{2}}\Gamma\bigg(\frac{s}{2}\bigg) \Gamma\bigg(\frac{s-z}{2}\bigg)\zeta(s)\zeta(s-z)=\pi^{-\frac{(1-s)}{2}-\frac{(1-s+z)}{2}}\Gamma\bigg(\frac{1-s}{2}\bigg) \Gamma\bigg(\frac{1-s+z}{2}\bigg)\zeta(1-s)\zeta(1-s+z).  
	\end{equation}
	Hence by taking  $a(n)=b(n)= \pi^{\frac{z}{2}}\s_z(n)$ and $\lambda_n=\mu_n=n$, the functional equation \eqref{generalfneqn} in Definition \ref{def:fn eqn of F} is satisfied for $\d=z+1$, $Q=\pi^{-1}$, $\omega$=1, and $r=2$, with $\a_1=\frac{1}{2}$, $\b_1=0$, $\a_2=\frac{1}{2}$, and $\b_2= -\frac{z}{2}$. We consider the various identities that our results are concerned with in the case of this Dirichlet series.
	
	For $x,a>0$, from Theorem \ref{thm:FE implies mod rel1}, we have 
	\begin{equation} \label{modrel1 for sigma_z}
		\sum_{n=1}^{\infty}\pi^{\frac{z}{2}}\s_z(n)Z_{\bs \alpha,\bs\beta}(nx)=P(x)+\left(\frac{\pi}{x}\right)^{z+1}\sum_{n=1}^{\infty}\pi^{\frac{z}{2}}\bar{\s}_z(n) Z_{\bs\alpha,\bs{\beta}}\bigg(\frac{n\pi^2}{x}\bigg),
	\end{equation}
	where $$Z_{\bs \alpha,\bs\beta}(x):=\frac{1}{2 \pi i}\int_{a-i\infty}^{a+i\infty} \Gamma\bigg(\frac{s}{2}\bigg) \Gamma\bigg(\frac{s-z}{2}\bigg) x^{-s}ds,$$
	and 
	\begin{equation}
			P(x)= \frac{1}{2 \pi i}\int_{C}\pi^{\frac{z}{2}}\Gamma\left(\frac{s}{2}\right) \Gamma\left(\frac{s-z}{2}\right)\zeta(s)\zeta(s-z) x^{-s}ds, 
		\end{equation} with $C$ denoting a circle of finite radius, encircling poles of the above integrand.
	Let $x>0$ and $\rho > 2\s-z-\frac{3}{2}$, where $\s=\max\{1,1+\Re(z)\}$. Let $a> \max\{1,1+\Re(z)\}$ be sufficiently large so that all the singularities of $\pi^{\frac{z}{2}}\Gamma\left(\frac{s}{2}\right) \Gamma\left(\frac{s-z}{2}\right)\zeta(s)\zeta(s-z)$ are contained in the strip $z+1-a<\Re(s)<a$. Then, Theorem \ref{thm:FE implies Rieszsum} gives the Riesz-sum identity
	\begin{multline} \label{rieszsum for sigma_z}
		\frac{1}{\Gamma(\rho+1)}\sideset{}{'}\sum_{n\leq  x}{} \pi^{\frac{z}{2}}\s_z(n)(x-n)^\rho\\
		=Q_\rho(x)+ x^{z+1+\rho} \pi^{z+1}\sum_{n=1}^{\infty}\pi^{\frac{z}{2}}\bar{\s}_z(n)\frac{1}{2 \pi i}\int_{a-i\infty}^{a+i\infty} \frac{\Gamma(z+1-s) \Gamma\left(\frac{s}{2}\right) \Gamma\left(\frac{s-z}{2}\right)}{\Gamma(z-s+\rho)\Gamma\left(\frac{1-s}{2}\right) \Gamma\left(\frac{1-s+z}{2}\right)}(nx\pi^2)^{-s}ds	, 
	\end{multline}
	where \begin{equation}
		Q_\rho(x)=\frac{1}{2 \pi i}\int_{C}\frac{\pi^{\frac{z}{2}}\z(s)\z(s-z)\Gamma(s)x^{s+\rho}}{\Gamma(s+\rho+1)} ds,\label{exp:Q_rho}
	\end{equation} with $C=C_a$ denoting a circle of finite radius, lying inside the strip $z+1-a<\Re(s)<a$,  containing all the singularities of $\pi^{\frac{z}{2}}\Gamma\left(\frac{s}{2}\right) \Gamma\left(\frac{s-z}{2}\right)\zeta(s)\zeta(s-z)$, such that all the singularities of $\pi^{\frac{z}{2}}\z(s)\z(s-z)\Gamma(s)x^{s+\rho}$ which lie in this strip are contained inside $C$.
	 For the choices of $a$ and $C$ defined above, the following modular-type relation can be obtained from Theorem \ref{thm:riesz sum implies modreln 2},  
	\begin{multline}\label{modrel2 for sigma_z} 
		\sum_{n=1}^{\infty}\pi^{\frac{z}{2}}\s_z(n)\frac{1}{2 \pi i}\int_{a-i\infty}^{a+i\infty} \G(s)\Gamma\left(\frac{s}{2}\right) \Gamma\left(\frac{s-z}{2}\right) (nx)^{-s}=  
		P_1(x)\\+\left(\frac{\pi}{x}\right)\sum_{n=1}^{\infty}\pi^{\frac{z}{2}}\bar{\s}_z(n) \frac{1}{2 \pi i}\int_{a-i\infty}^{a+i\infty} \G(z+1-s)\Gamma\left(\frac{s}{2}\right) \Gamma\left(\frac{s-z}{2}\right) \left(\frac{n\pi^2}{x}\right)^{-s}ds, 
	\end{multline} 
	where
	\begin{equation}
		P_1(x)= \frac{1}{2 \pi i}\int_{C}  \pi^{\frac{z}{2}}\G(s)\Gamma\left(\frac{s}{2}\right) \Gamma\left(\frac{s-z}{2}\right)\zeta(s)\zeta(s-z)x^{-s}ds.
	\end{equation} 
 By Corollary \ref{cor:equiv of identities}, the relations \eqref{FE for sigma_z(n)}, \eqref{modrel1 for sigma_z}, \eqref{rieszsum for sigma_z}, and \eqref{modrel2 for sigma_z} are equivalent.
\end{example}

\begin{remark}
	If we take $z$ to be an odd integer, the Gamma factors in the functional equation \eqref{FE for sigma_z(n)} can be reduced to a single Gamma factor using Legendre's multiplication formula. In this case, the equivalence of the above identities follows directly from the work of Chandrasekharan and Narasimhan \cite{ChandNarHeck}, which applies in the case of a single Gamma factor.
\end{remark}

\begin{example}	
 Consider $z=\frac{k-1}{2}$. Then the Dirichlet series associated to the arithmetical function $\s^{(k)}_{\frac{k-1}{2}}(n)$ satisfies the functional equation
\begin{multline}
		\pi^{\frac{k-1}{4}-\frac{k+1}{2}s}\Gamma\left(\frac{s}{2}\right) \Gamma\left(\frac{ks}{2}-\frac{k-1}{4}\right)\zeta(s)\zeta\left(ks-\frac{k-1}{2}\right)\\
		=\pi^{-\frac{(k+3)}{4}+\frac{(k+1)}{2}s}\Gamma\left(\frac{1-s}{2}\right) \Gamma\left(\frac{-ks}{2}+\frac{k+1}{4}\right)\zeta(1-s)\zeta\left(-ks+\frac{k+1}{2}\right). 
\end{multline}
It can be seen by taking $a_n=b_n= \pi^{\frac{k-1}{4}}\s^{(k)}_{\frac{k-1}{2}}(n)$ and $\lambda_n=\mu_n=n$, that the functional equation \eqref{generalfneqn} given in Definition \ref{def:fn eqn of F} is satisfied for $\d=1$, $Q=\pi^{-\frac{k+1}{2}}$, $\omega$=1, and $r=2$  with $\a_1=\frac{1}{2}$, $\b_1=0$, $\a_2=\frac{k}{2}$, and $\b_2= -\frac{k-1}{4}$. 

 For $x,a>0$, Theorem \ref{thm:FE implies mod rel1} gives
\begin{equation} \label{modrel1 for sigma^k_z}
	\sum_{n=1}^{\infty}\pi^{\frac{k-1}{4}}\s^{(k)}_{\frac{k-1}{2}}(n)Z_{\bs\alpha,\bs\beta}(nx)=P(x)+\left(\frac{\pi^{\frac{k+1}{2}}}{x}\right)\sum_{n=1}^{\infty}\pi^{\frac{k-1}{4}}\s^{(k)}_{\frac{k-1}{2}}(n) Z_{\bs\alpha,\bs{\bar{\beta}}}\left(\frac{n\pi^{k+1}}{x}\right),
\end{equation}
where $$Z_{\bs \alpha,\bs\beta}(x)=\frac{1}{2 \pi i}\int_{a-i\infty}^{a+i\infty} \Gamma\left(\frac{s}{2}\right) \Gamma\left(\frac{ks}{2}-\frac{k-1}{4}\right) x^{-s}ds,$$
and 
	\begin{equation}
	P(x)= \frac{1}{2 \pi i}\int_{C}\pi^{\frac{k-1}{4}}\Gamma\left(\frac{s}{2}\right) \Gamma\left(\frac{ks}{2}-\frac{k-1}{4}\right)\zeta(s)\zeta\left(ks-\frac{k-1}{2}\right)x^{-s}ds, 
\end{equation} 
with $C$ denoting a circle of finite radius, encircling poles of the above integrand.
Let $x>0$, $\rho >\frac{k}{2}$ and $a>1$ be sufficiently large so that all the singularities of $\pi^{\frac{k-1}{4}}\Gamma\left(\frac{s}{2}\right) \Gamma\left(\frac{ks}{2}-\frac{k-1}{4}\right)\zeta(s)\zeta\left(ks-\frac{k-1}{2}\right)$ are contained in the strip $1-a<\Re(s)<a$. From Theorem \ref{thm:FE implies Rieszsum}, we obtain the Riesz-sum identity
\begin{multline} \label{rieszsum for sigma^k_z}
	\frac{1}{\Gamma(\rho+1)}\sideset{}{'}\sum_{n\leq  x}{} \pi^{\frac{k-1}{4}}\s^{(k)}_{\frac{k-1}{2}}(n)(x-n)^\rho\\
	=Q_\rho(x)+ x^{1+\rho} \pi^{\frac{k+1}{2}}\sum_{n=1}^{\infty}\pi^{\frac{k-1}{4}}\s^{(k)}_{\frac{k-1}{2}}(n)\frac{1}{2 \pi i}\int_{a-i\infty}^{a+i\infty} \frac{\Gamma(1-s) \Gamma\left(\frac{s}{2}\right) \Gamma\left(\frac{ks}{2}-\frac{k-1}{4}\right)}{\Gamma(2-s+\rho)\Gamma\left(\frac{1-s}{2}\right) \Gamma\left(\frac{-ks}{2}+\frac{k+1}{4}\right)}(nx\pi^{k+1})^{-s}ds, 
\end{multline} 
where
 \begin{equation}
	Q_\rho(x)=\frac{1}{2 \pi i}\int_{C}\frac{\pi^{\frac{k-1}{4}}\zeta(s)\zeta\left(ks-\frac{k-1}{2}\right)\Gamma(s)x^{s+\rho}}{\Gamma(s+\rho+1)} ds,\label{exp2:Q_rho}
\end{equation} with $C=C_a$ denoting a circle of finite radius, lying inside the the strip $1-a<\Re(s)<a$,  containing all the singularities of $\pi^{\frac{k-1}{4}}\Gamma\left(\frac{s}{2}\right) \Gamma\left(\frac{ks}{2}-\frac{k-1}{4}\right)\zeta(s)\zeta\left(ks-\frac{k-1}{2}\right)$, such that all the singularities of $\pi^{\frac{k-1}{4}}\zeta(s)\zeta\left(ks-\frac{k-1}{2}\right)\Gamma(s)x^{s+\rho}$ which lie in this strip are contained inside $C$.
For the choices of $a$ and $C$ defined above, Theorem \ref{thm:riesz sum implies modreln 2} gives another modular-type relation
\begin{multline}\label{modrel2 for sigma^k_z} 
	\sum_{n=1}^{\infty}\pi^{\frac{k-1}{4}}\s^{(k)}_{\frac{k-1}{2}}(n)\frac{1}{2 \pi i}\int_{a-i\infty}^{a+i\infty} \G(s)\Gamma\left(\frac{s}{2}\right) \Gamma\left(\frac{ks}{2}-\frac{k-1}{4}\right) (nx)^{-s}=  
	P_1(x)\\+\left(\frac{\pi^{\frac{k+1}{2}}}{x}\right)\sum_{n=1}^{\infty}\pi^{\frac{k-1}{4}}\s^{(k)}_{\frac{k-1}{2}}(n) \frac{1}{2 \pi i}\int_{a-i\infty}^{a+i\infty} \G(1-s)\Gamma\left(\frac{s}{2}\right) \Gamma\left(\frac{ks}{2}-\frac{k-1}{4}\right) \left(\frac{n\pi^{k+1}}{x}\right)^{-s}ds, 
\end{multline} 
where 
\begin{equation}
	P_1(x)= \frac{1}{2 \pi i}\int_{C}  \pi^{\frac{k-1}{4}}\G(s)\Gamma\left(\frac{s}{2}\right) \Gamma\left(\frac{ks}{2}-\frac{k-1}{4}\right)\zeta(s)\zeta\left(ks-\frac{k-1}{2}\right) x^{-s}ds.
\end{equation} 
By Corollary \ref{cor:equiv of identities}, the relations \eqref{FE for sigma_z(n)}, \eqref{modrel1 for sigma^k_z}, \eqref{rieszsum for sigma^k_z}, and \eqref{modrel2 for sigma^k_z} are equivalent.
\end{example}

\begin{example}
	The class of $L$-functions introduced by Selberg \cite{Selbergclass} satisfies the hypothesis of Definition \ref{def:fn eqn of F} with $\d=1$. This class, called the Selberg class, includes well known $L$-functions such as the Dirichlet $L$-functions, Hecke $L$-functions associated with algebraic number fields, the Hecke $L$-functions associated with holomorphic modular forms under appropriate restrictions and normalizations, Artin $L$-functions under the assumption of the Artin conjecture, and the automorphic $L$-functions provided the Ramanujan conjecture holds. Our main result on the equivalence of functional equation, modular relation and Riesz sum identity applies to this broad class of $L$-functions.
\end{example}

\section*{Acknowledgements}
The third author was partially supported by the DST INSPIRE Faculty Award Program  DST/ INSPIRE/Faculty/Batch-13/2018. Some  of this work was carried out when the third author was  visiting  the Harish-Chandra Research Institute, Prayagraj and she is grateful for the kind hospitality. 


\bibliographystyle{abbrv}
\bibliography{rsv}

\end{document}